\newtheorem{theorem}{Theorem}[section]
\newtheorem{proposition}{Proposition}[section]
\newtheorem{definition}{Definition}[section]
\newtheorem{remark}{Remark}[section]
\newtheorem{lemma}{Lemma}[section]
\newtheorem{example}{Example}[section]
\newtheorem{corollary}{Corollary}[section]
\setlist{nolistsep}
\setlist[itemize]{parsep=\baselineskip}
\newcommand{\Modl}{ \mbox{}_R{\rm{\bf Mod}} }
\newcommand{\Cadl}{ {\rm {\bf Ch}}(\mbox{}_R{\rm {\bf Mod}}) }
\newcommand{\Complexes}{ {\rm {\bf Ch}}(\mathcal{C}) }   
\begin{document}

\thispagestyle{empty}

\title{Adjointness properties for relative extensions \\ of disk and sphere chain complexes}

\author{Marco A. P\'erez B. \\ Department of Mathematics \\ Massachusetts Institute of Technology }

\maketitle


\begin{abstract}
\noindent We study the subgroup $\mathcal{E}xt^i_{\mathcal{C}}(\mathcal{F}; C, D)$ of $\mathcal{E}xt^i_{\mathcal{C}}(C,D)$ formed by those $i$-extensions of $C$ by $D$ in an Abelian category $\mathcal{C}$ which are ${\rm Hom}_{\mathcal{C}}(\mathcal{F},-)$-exact, and present a Baer-like description of this subgroup in terms of certain right derived functors of ${\rm Hom}_{\mathcal{C}}(-,-)$. We also study adjointness properties of these subgroups and the disk and sphere chain complex functors $\mathcal{C} \longrightarrow \Complexes$, given by a collection of natural isomorphisms which generalize the corresponding adjointness properties proven by J. Gillespie for $\mathcal{E}xt^i(-,-)$.  
\end{abstract}  



\tableofcontents


\section{Introduction}

Let $\mathcal{C}$ be an Abelian category and $\Complexes$ denote the category of chain complexes over $\mathcal{C}$. Given a chain complex $X$ with differential maps $\partial^X_m : X_m \longrightarrow X_{m-1}$, for each $m \in \mathbb{Z}$ we consider three objects associated to $m$, namely: $X_m$, $Z_m(X) = {\rm Ker}(\partial^X_m)$, and $X_m / B_m(X)$, where $B_m(X) = {\rm Im}(\partial^X_{m+1})$. These particular choices of objects are functorial, i.e. they define the following functors $\Complexes \longrightarrow \mathcal{C}$:
\begin{itemize}[noitemsep, topsep=-10pt]
\item The \underline{$m$-component functor $(-)_m : \Complexes \longrightarrow \mathcal{C}$} is given by $X \mapsto X_m$ for every complex $X$, and if $f : X \longrightarrow Y$ is a chain map, then $f$ is mapped to the morphism $f_m : X_m \longrightarrow Y_m$ in $\mathcal{C}$.

\item The \underline{$m$-cycle functor $Z_m : \Complexes \longrightarrow \mathcal{C}$} is given by $X \mapsto Z_m(X)$ for every complex $X$, and if ${f : X \longrightarrow Y}$ is a chain map, then $Z_m(f)$ is the only morphism $Z_m(X) \longrightarrow Z_m(Y)$ induced by the universal property of kernels. 

\item The \underline{$m$-quotient functor $Q_m : \Complexes \longrightarrow \mathcal{C}$} is given by $X \mapsto X_m / B_m(X)$ for every complex $X$, and if $f : X \longrightarrow Y$ is a chain map, then $Q_m(f)$ is the only morphism $X_m / B_m(X) \longrightarrow Y_m / B_m(Y)$ induced by the universal property of cokernels. 
\end{itemize}

On the other hand, for every object $C$ in $\mathcal{C}$ and for every integer $m \in \mathbb{Z}$, there are two chain complexes associated to $C$:
\begin{itemize}[noitemsep, topsep=-10pt]
\item The \underline{$m$-disk complex centred at $C$}, $D^m(C)$ is defined to be $C$ in degrees $m$ and $m-1$, and zero in all other degrees,  whose differential maps are all zero except for $\partial^{D^m(C)}_m = {\rm id}_C$. 

\item The \underline{$m$-sphere complex centred at $C$}, $S^m(C)$ is defined to be $C$ in degree $m$, and zero in all other degrees, whose differential maps are all zero. 
\end{itemize}

Disk and sphere complexes define functors $D^m, S^m : \mathcal{C} \longrightarrow \Complexes$. It is not hard to see that $D^m$ is a left adjoint of $(-)_m$, and a right adjoint of $(-)_{m-1}$. On the other hand, $S^m$ is a left adjoint of $Z_m$ and a right adjoint of $Q_m$. This can be restated as follows. \\

\begin{proposition}[See {\rm \cite[Lemma 3.1, (1), (2), (3) \& (4)]{Gil}}] If $C$ is an object of $\mathcal{C}$ and $X$ and $Y$ are chain complexes over $\mathcal{C}$, we have the following natural isomorphisms:
\begin{itemize}[noitemsep, topsep=-10pt]
\item[{\bf (1)}] ${\rm Hom}_{\mathcal{C}}(X_{m-1},C) \cong {\rm Hom}_{\Complexes}(X, D^m(C))$.
\item[{\bf (2)}] ${\rm Hom}_{\mathcal{C}}(C, Y_m) \cong {\rm Hom}_{\Complexes}(D^m(C), Y)$.
\item[{\bf (3)}] ${\rm Hom}_{\mathcal{C}}(X_m / B_m(X), C) \cong {\rm Hom}_{\Complexes}(X, S^m(C))$.
\item[{\bf (4)}] ${\rm Hom}_{\mathcal{C}}(C, Z_m(Y)) \cong {\rm Hom}_{\Complexes}(S^m(C),Y)$.
\end{itemize}
\end{proposition}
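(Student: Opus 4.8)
The plan is to establish each of the four natural isomorphisms by exhibiting an explicit bijection on hom-sets and then checking naturality, since all four have the same flavour: a chain map into or out of a disk/sphere complex is determined by a single morphism in $\mathcal{C}$, and the determining data is exactly one of the component, cycle, or quotient objects. I would treat (1) and (2) together as the "disk" pair, and (3) and (4) together as the "sphere" pair, since within each pair the argument for one is essentially the transpose of the argument for the other.

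For (2), I would start with a chain map $f : D^m(C) \longrightarrow Y$. Since $D^m(C)$ is concentrated in degrees $m$ and $m-1$ with $\partial^{D^m(C)}_m = {\rm id}_C$, the map $f$ consists of morphisms $f_m : C \to Y_m$ and $f_{m-1} : C \to Y_{m-1}$ with all other components zero, and the chain condition forces $f_{m-1} = \partial^Y_m \circ f_m$. Thus $f$ is completely determined by $f_m \in {\rm Hom}_{\mathcal{C}}(C, Y_m)$, and conversely any $g \in {\rm Hom}_{\mathcal{C}}(C, Y_m)$ yields a chain map by setting the degree-$m$ component to $g$, the degree-$(m-1)$ component to $\partial^Y_m \circ g$, and everything else to zero; one checks the remaining chain identities hold trivially because the adjacent differentials of $D^m(C)$ vanish. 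This gives the bijection; naturality in both $C$ and $Y$ follows by chasing the definitions of $D^m$ on morphisms and of the induced maps on hom-sets. For (1), I would dualize: a chain map $f : X \longrightarrow D^m(C)$ has only the components $f_m : X_m \to C$ and $f_{m-1} : X_{m-1} \to C$ possibly nonzero, and now the chain condition at degree $m$ reads $f_{m-1} \circ \partial^X_m = {\rm id}_C \circ f_m = f_m$, so $f$ is determined by $f_{m-1} \in {\rm Hom}_{\mathcal{C}}(X_{m-1}, C)$, with the inverse construction setting $f_m = f_{m-1} \circ \partial^X_m$.

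For (4), given a chain map $f : S^m(C) \longrightarrow Y$: since $S^m(C)$ is $C$ in degree $m$ with zero differential, the only possibly-nonzero component is $f_m : C \to Y_m$, and the chain condition at degree $m$ forces $\partial^Y_m \circ f_m = 0$, i.e. $f_m$ factors (uniquely) through $Z_m(Y) = {\rm Ker}(\partial^Y_m)$. So $f \mapsto$ (the induced map $C \to Z_m(Y)$) is the desired bijection, with inverse given by composing $C \to Z_m(Y)$ with the inclusion $Z_m(Y) \hookrightarrow Y_m$. For (3), dually, a chain map $f : X \longrightarrow S^m(C)$ has only $f_m : X_m \to C$ possibly nonzero, and the chain condition at degree $m+1$ forces $f_m \circ \partial^X_{m+1} = 0$, i.e. $f_m$ kills $B_m(X) = {\rm Im}(\partial^X_{m+1})$ and hence factors uniquely through $X_m / B_m(X)$; this gives the bijection with ${\rm Hom}_{\mathcal{C}}(X_m/B_m(X), C)$.

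In each case naturality is the only part requiring care rather than mere inspection, so I expect that to be the main (though still routine) obstacle: one must verify that for a morphism $\varphi : C \to C'$ in $\mathcal{C}$ (resp. a chain map $Y \to Y'$ or $X \to X'$) the square relating the two hom-sets commutes, which amounts to unwinding how $D^m$, $S^m$, $Z_m$, $Q_m$, and $(-)_m$ act on morphisms and observing that post- or pre-composition is compatible with the factorizations described above. Since kernels and cokernels are computed degreewise in $\Complexes$ and the universal properties are what define $Z_m(f)$ and $Q_m(f)$, these diagrams commute automatically. I would present (2) and (4) in full and remark that (1) and (3) follow by the evident dualization, to avoid repetition.
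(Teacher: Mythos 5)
Your proof is correct and is exactly the standard adjointness argument that the paper has in mind (it offers no proof of its own, deferring to Gillespie's Lemma 3.1, which proceeds the same way): a chain map into or out of $D^m(C)$ or $S^m(C)$ is determined by a single component, and the chain conditions force precisely the factorizations through $X_{m-1}$, $Y_m$, $X_m/B_m(X)$, or $Z_m(Y)$. Nothing further is needed.
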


In the case $\mathcal{C}$ is equipped with enough projective and injective objects, we can compute the extension functors ${\rm Ext}^i_{\mathcal{C}}(-,-)$. Recall that ${\rm Hom}_{\mathcal{C}}(-,-) = {\rm Ext}^0_{\mathcal{C}}(-,-)$. The previous adjointness relations are also valid for $i > 0$, under certain hypothesis. In 2004, J. Gillespie proved in \cite[Lemma 3.1, (5) \& (6)]{Gil} that ${\rm Ext}^1_{\mathcal{C}}(X_{m-1},C) \cong {\rm Ext}^1_{\Complexes}(X, D^m(C))$ and ${\rm Ext}^1_{\mathcal{C}}(C, Y_m) \cong {\rm Ext}^1_{\Complexes}(D^m(C), Y)$. Four years later, the same author proved in \cite[Lemma 4.2]{Gillespie} that the remaining isomorphisms ${\rm Ext}^1_{\mathcal{C}}(X_m / B_m(X), C) \cong {\rm Ext}^1_{\Complexes}(X, S^m(C))$ and ${\rm Ext}^1_{\mathcal{C}}(C, Z_m(Y)) \cong {\rm Ext}^1_{\Complexes}(S^m(C),Y)$ also hold in the case $X$ and $Y$ are exact. These isomorphisms have become an important tool in the study of cotorsion pairs of chain complexes and modules. Since cotorsion pairs are, generally speaking, defined by two classes of objects (say modules or complexes over them) orthogonal to each other with respect to ${\rm Ext}^1(-,-)$, in some cases checking that two complexes are orthogonal reduces to verify the orthogonality between their corresponding terms, cycles or quotients by boundaries. 

The construction of Gillespie's isomorphisms are based on the Baer description of extension functors. Recall that if $\mathcal{C}$ is an Abelian category equipped with either enough projective or injective objects, then ${\rm Ext}^1_{\mathcal{C}}(C,D)$ can be described as the group of classes of extensions of $C$ by $D$, i.e. short exact sequences of the form $S = 0 \longrightarrow D \longrightarrow Z \longrightarrow C \longrightarrow 0$, under a certain equivalence relation. We shall denote this group by $\mathcal{E}xt^1_{\mathcal{C}}(C,D)$. 

The goal of this paper is to study Gillespie's adjointness properties in the context of relative homological algebra. For this purpose, it is useful to consider certain subgroups of $\mathcal{E}xt^1_{\mathcal{C}}(C,D)$. Namely, if $\mathcal{F}$ is a class of objects of $\mathcal{C}$, then we denote $\mathcal{E}^1_{\mathcal{C}}(\mathcal{F}; C, D)$ the subgroup formed by those classes of extensions $S$ which are also exact \textquotedblleft relative to\textquotedblright \ $\mathcal{F}$, i.e. that ${\rm Hom}_{\mathcal{C}}(F,S)$ is exact for every $F \in \mathcal{F}$. One interesting fact we shall prove about these subgroups is that if $\mathcal{F}$ is an special pre-covering class, then $\mathcal{E}xt^1_{\mathcal{C}}(\mathcal{F}; C, D)$ is isomorphic to ${\rm Ext}^1_{\mathcal{C}}(\mathcal{F}; C,D)$, the first right derived functor of ${\rm Hom}_{\mathcal{C}}(-,-)$ computed by using resolutions of $C$ by objects in $\mathcal{F}$.

The contents of these paper are organized as follows. In Section 2 we recall the notions of pre-covering and pre-enveloping classes, left and right resolutions, and how they are used to obtain right derived functors of ${\rm Hom}_{\mathcal{C}}(-,-)$. Then in Section 3 we study the subgroups $\mathcal{E}xt^1_{\mathcal{C}}(\mathcal{F};C,D)$ of relative extensions and construct an isomorphism onto the right derived functors ${\rm Ext}^1_{\mathcal{C}}(\mathcal{F};C,D)$ in the particular case where $\mathcal{F}$ is a special pre-covering class. Section 4 is devoted to extend Gillespie's adjointness properties to the context of relative extensions. We recall the classes $\widetilde{\mathcal{F}}$ and ${\rm dw}\widetilde{\mathcal{F}}$ of $\mathcal{F}$-complexes and degreewise $\mathcal{F}$-complexes induced by a class $\mathcal{F}$ of objects in $\mathcal{C}$. We prove that the groups ${\rm Ext}^1_{\mathcal{C}}(\mathcal{F}; X_{m-1},C)$ and ${\rm Ext}^1_{\Complexes}({\rm dw}\widetilde{\mathcal{F}}; X, D^m(C))$ are isomorphic, and that the same is true for $\mathcal{F}$. Later in Section 5 we continue our study of relative extensions applied to sphere chain complexes. We show that there are natural monomorphisms $\mathcal{E}xt^i_{\mathcal{C}}(\mathcal{F}; \frac{X_m}{B_m(X)}, C) \hookrightarrow \mathcal{E}xt^i_{\Complexes}(\widetilde{\mathcal{F}}; X, S^{m}(C))$ and $\mathcal{E}xt^i_{\mathcal{C}}(\mathcal{F}; C, Z_m(Y)) \hookrightarrow \mathcal{E}xt^i_{\Complexes}(\widetilde{\mathcal{F}}; S^m(C), Y)$, which are actually isomorphisms in the case where $X$ and $Y$ are exact and ${\rm Hom}_{\mathcal{C}}(\mathcal{F},-)$-exact. We conclude this work presenting some applications of our results in the context of Gorenstein homological algebra, where we shall work with modules and chain complexes over a Gorenstein ring. In this particular setting, Gorenstein-extension functors ${\rm GExt}^i(-,-)$ have their Baer description with respect to the class of Gorenstein-projective modules (or complexes), since these modules from a special pre-covering class. Moreover, it turns out that the class of Gorenstein-projective complexes coincides with the class of differential graded Gorenstien-projective complexes, and we shall use this characterization to provide another proof of the adjointness properties of ${\rm GExt}^i(-,-)$ and sphere chain complexes. 



\section{Pre-covering classes and right derived functors of ${\rm Hom}_{\mathcal{C}}(-,-)$}

In this section we recall the notion of derived functors, as one of the key concepts in this work. We focus in the particular case of getting right derived functors of ${\rm Hom}_{\mathcal{C}}(-,-)$ from resolutions by a certain class of objects in an Abelian category. The theoretic setting presented below includes the computation of extension ${\rm Ext}^i(-,-)$ and Gorenstein-extension ${\rm GExt}^i(-,-)$ functors. If the reader in interested in more details on these topics, a good reference is \cite[Chapters 8 \& 12]{EJ}. \\

\begin{definition} Let $\mathcal{F}$ be a class of objects in an Abelian category $\mathcal{C}$.
\begin{itemize}[noitemsep, topsep=-10pt]
\item[{\bf (1)}] {\rm \cite[Definition 8.1.1]{EJ}} A chain complex $X = \cdots \longrightarrow X_{m+1} \longrightarrow X_m \longrightarrow X_{m-1} \longrightarrow \cdots$ is said to be \underline{${\rm Hom}_{\mathcal{C}}(\mathcal{F}, -)$-exact} if for every object $F$ of $\mathcal{F}$, the complex of Abelian groups \[ {\rm Hom}_{\mathcal{C}}(F,X) = \cdots \longrightarrow {\rm Hom}_{\mathcal{C}}(F, X_{m+1}) \longrightarrow {\rm Hom}_{\mathcal{C}}(F, X_m) \longrightarrow {\rm Hom}_{\mathcal{C}}(F, X_{m-1}) \longrightarrow \cdots \] is exact. The notion of ${\rm Hom}_{\mathcal{C}}(-, \mathcal{F})$-exact complex is dual. 

\item[{\bf (2)}] {\rm \cite[Definition 8.1.2]{EJ}} A \underline{left $\mathcal{F}$-resolution} of an object $C$ of $\mathcal{C}$ is a ${\rm Hom}_{\mathcal{C}}(\mathcal{F},-)$-exact (but not necessarily exact) complex $\cdots \longrightarrow F_1 \longrightarrow F_0 \longrightarrow C \longrightarrow 0$ where $F_m \in \mathcal{F}$ for every $m \geq 0$. Right $\mathcal{F}$-resolutions are defined dually. 

\item[{\bf (3)}] {\rm \cite[Definition 5.1.1]{EJ}} A morphism $f : F \longrightarrow C$ with $F \in \mathcal{F}$ is said to be an \underline{$\mathcal{F}$-cover} of $C$ if: 
\begin{itemize}[noitemsep, topsep=10pt]
\item[ {\bf (i)}] Given another morphism $f' : F' \longrightarrow C$ with $F' \in \mathcal{F}$, there exists a morphism $\varphi : F' \longrightarrow F$ (not necessarily unique) such that $f' = f \circ \varphi$. 

\item[ {\bf (ii)}] If $F' = F$ then $\varphi$ is an automorphism of $F$. 
\end{itemize}
If $f$ satisfies {\bf (i)} but may be not {\bf (ii)}, then it is called an \underline{$\mathcal{F}$-pre-cover}. The class $\mathcal{F}$ is called a \underline{(pre-)covering class} if every object of $\mathcal{C}$ has an $\mathcal{F}$-(pre-)cover. The dual notions of $\mathcal{F}$-covers and $\mathcal{F}$-pre-covers are those of $\mathcal{F}$-envelopes and $\mathcal{F}$-pre-envelopes.
\end{itemize}
\end{definition}

The following proposition is not hard to prove. \\

\begin{proposition} If $\mathcal{F}$ is a pre-covering class in $\mathcal{C}$, then every object of $\mathcal{C}$ has a left $\mathcal{F}$-resolution. Dually, if $\mathcal{F}$ is a pre-enveloping class in $\mathcal{C}$, then every object of $\mathcal{C}$ has a right $\mathcal{F}$-resolution. 
\end{proposition}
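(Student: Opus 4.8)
The plan is to construct a left $\mathcal{F}$-resolution of an arbitrary object $C$ by iterating the pre-covering property, exactly mimicking the classical construction of projective resolutions. First I would invoke the hypothesis to obtain an $\mathcal{F}$-pre-cover $f_0 : F_0 \longrightarrow C$ with $F_0 \in \mathcal{F}$. Set $C_1 = {\rm Ker}(f_0)$ and let $d_1 : C_1 \hookrightarrow F_0$ be the inclusion. Applying the pre-covering property again gives an $\mathcal{F}$-pre-cover $f_1 : F_1 \longrightarrow C_1$; composing, define $\partial_1 = d_1 \circ f_1 : F_1 \longrightarrow F_0$, so that ${\rm Im}(\partial_1) = {\rm Im}(f_1)$ and $\partial_0 \circ \partial_1 = 0$ where $\partial_0 = f_0$. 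Iterating, having built $C_m$ together with its inclusion into $F_{m-1}$, take an $\mathcal{F}$-pre-cover $f_m : F_m \longrightarrow C_m$, set $C_{m+1} = {\rm Ker}(f_m)$, and define $\partial_m : F_m \longrightarrow F_{m-1}$ as the composite of $f_m$ with the inclusion $C_m \hookrightarrow F_{m-1}$. This produces a complex $\cdots \longrightarrow F_1 \longrightarrow F_0 \longrightarrow C \longrightarrow 0$ with all $F_m \in \mathcal{F}$.

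It then remains to check that this complex is ${\rm Hom}_{\mathcal{C}}(\mathcal{F},-)$-exact, i.e. that for every $F \in \mathcal{F}$ the complex ${\rm Hom}_{\mathcal{C}}(F, -)$ applied to it is exact. Exactness at ${\rm Hom}_{\mathcal{C}}(F,C)$ amounts to surjectivity of ${\rm Hom}_{\mathcal{C}}(F,f_0)$, which is precisely condition (i) in the definition of $\mathcal{F}$-pre-cover. For exactness at ${\rm Hom}_{\mathcal{C}}(F, F_{m-1})$ (with $m \geq 1$), take $g : F \longrightarrow F_{m-1}$ with $\partial_{m-1} \circ g = 0$ (reading $\partial_0 = f_0$). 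Then $g$ factors through ${\rm Ker}(\partial_{m-1}) = {\rm Ker}(f_{m-1}) = C_m$, say $g = \iota_m \circ g'$ where $\iota_m : C_m \hookrightarrow F_{m-1}$ is the inclusion and $g' : F \longrightarrow C_m$. Since $f_m : F_m \longrightarrow C_m$ is an $\mathcal{F}$-pre-cover and $F \in \mathcal{F}$, there is $\varphi : F \longrightarrow F_m$ with $g' = f_m \circ \varphi$, whence $g = \iota_m \circ f_m \circ \varphi = \partial_m \circ \varphi$, giving the desired lift. One small point to verify carefully is that ${\rm Ker}(\partial_{m-1})$ really equals ${\rm Ker}(f_{m-1})$, and this holds because $\partial_{m-1} = \iota_{m-1} \circ f_{m-1}$ with $\iota_{m-1}$ a monomorphism (for $m = 1$, $\partial_0 = f_0$ directly).

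The dual statement is obtained by passing to the opposite category: a pre-enveloping class in $\mathcal{C}$ is a pre-covering class in $\mathcal{C}^{\mathrm{op}}$, an $\mathcal{F}$-pre-envelope becomes an $\mathcal{F}$-pre-cover there, and ${\rm Hom}_{\mathcal{C}}(-,\mathcal{F})$-exactness becomes ${\rm Hom}_{\mathcal{C}^{\mathrm{op}}}(\mathcal{F},-)$-exactness; so the first half applies verbatim and yields a right $\mathcal{F}$-resolution of any object. I do not anticipate a genuine obstacle here — the construction is routine — but the one place demanding care is the bookkeeping identifying the kernels and images at each stage, and in particular making sure the "$\mathcal{F}$-pre-cover" property is the right hypothesis to guarantee the $\mathrm{Hom}(F,-)$-exactness (rather than mere exactness) of the resulting complex.
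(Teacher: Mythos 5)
Your construction is correct and is precisely the standard iterated-pre-cover argument that the paper has in mind when it says the proposition "is not hard to prove" (the paper itself omits the proof, deferring to \cite{EJ}); in particular you correctly identify that only ${\rm Hom}_{\mathcal{C}}(\mathcal{F},-)$-exactness (not exactness) is needed, which is exactly what condition (i) of the pre-cover definition delivers at each stage, and the kernel bookkeeping ${\rm Ker}(\partial_{m-1}) = {\rm Ker}(f_{m-1})$ via the monomorphism $\iota_{m-1}$ is handled properly. The dualization via $\mathcal{C}^{\mathrm{op}}$ is also the intended route for the second half.
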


Let $T : \mathcal{C} \longrightarrow \mathcal{D}$ be a covariant functor between Abelian categories. Let $\mathcal{G}$ be a pre-enveloping class of $\mathcal{C}$ and $C$ an object in $\mathcal{C}$. Consider a right $\mathcal{G}$-resolution $0 \longrightarrow C \longrightarrow G^0 \longrightarrow G^1 \longrightarrow \cdots$ of $C$, which exists by the previous proposition, and denote by $\textbf{\textit{G}}^{\bullet} = G^0 \longrightarrow G^1 \longrightarrow \cdots$ the complex obtained after deleting the term $C$. The cohomology of the complex $T(\textbf{\textit{G}}^{\bullet})$ defines the \underline{right derived functors} of $T$, denoted $R^i T : C \mapsto (R^i T)(C)$. If $T$ is contravariant, then the right derived functors can be computed using left $\mathcal{F}$-resolutions of $C$. \newpage

\begin{example} Let $C$ and $D$ be two objects of $\mathcal{C}$, and $\mathcal{F}$ and $\mathcal{G}$ as above. The right $i$th derived functor of ${\rm Hom}_{\mathcal{C}}(-,D)$ evaluated at $C$ is defined as the $i$th cohomology of ${\rm Hom}_{\mathcal{C}}(\textbf{\textit{F}}_\bullet, D)$, and is denoted by \[ {\rm Ext}^i_{\mathcal{C}}(\mathcal{F}; C,D) := R^i({\rm Hom}_{\mathcal{C}}(-,D))(C). \] Dually, the $i$th cohomology of the complex ${\rm Hom}_{\mathcal{C}}(C,\textbf{\textit{G}}_\bullet)$ defines the right $i$th derived functor of ${\rm Hom}_{\mathcal{C}}(C,-)$ evaluated at $D$, denoted by \[ {\rm Ext}^i_{\mathcal{C}}(C,D; \mathcal{G}) := R^i({\rm Hom}_{\mathcal{C}}(C,-))(D). \] 
In the case where $\mathcal{F} = \mathcal{P}_{roj}(\mathcal{C})$ is the class of projective objects of an Abelian category $\mathcal{C}$ with enough projective objects (so $\mathcal{P}_{roj}(\mathcal{C})$ is pre-covering), then ${\rm Ext}^i_{\mathcal{C}}(\mathcal{P}_{roj}(\mathcal{C}); C, D)$ is the standard $i$th extension ${\rm Ext}^i_{\mathcal{C}}(C,D)$ (Notice that we may choose an exact (left) projective resolution of $C$). Moreover, if $\mathcal{I}_{nj}(\mathcal{C})$ denotes the class of injective objects, the groups ${\rm Ext}^i_{\mathcal{C}}(\mathcal{P}_{roj}(\mathcal{C}); C, D)$ and ${\rm Ext}^i_{\mathcal{C}}(C, D; \mathcal{I}_{nj}(\mathcal{C}))$ coincide when $\mathcal{C}$ has enough projective and injective objects. 

Another interesting case is when we put $\mathcal{F} = \mathcal{GP}_{roj}$ and $\mathcal{G} = \mathcal{GI}_{nj}$ as the classes of Gorenstein-projective and Gorenstein-injective modules, respectively. In the particular setting when $R$ is a Gorenstein ring, we can compute (exact) left Gorenstein-projective and right Gorenstein-injective resolutions of every module, and the groups ${\rm Ext}^i_{R}(\mathcal{GP}_{roj}; C, D)$ and ${\rm Ext}^i_R(C, D; \mathcal{GI}_{nj})$ coincide for every pair of modules $C$ and $D$. There will be more to be said about these classes in Section 6. 
\end{example}


\section{Baer description of $\mathcal{F}$-extension functors}

Given two objects $C$ and $D$ in an Abelian category $\mathcal{C}$, by an \underline{$i$-extension of $C$ by $D$} be mean an exact sequence of the form $S = 0 \longrightarrow D \longrightarrow E^i \longrightarrow \cdots \longrightarrow E^1 \longrightarrow C \longrightarrow 0$. We say that two exact sequences 
\begin{align*}
S & = 0 \longrightarrow D \longrightarrow E^i \longrightarrow \cdots \longrightarrow E^1 \longrightarrow C \longrightarrow 0 \mbox{ and } \hat{S} = 0 \longrightarrow D \longrightarrow \hat{E}^i \longrightarrow \cdots \longrightarrow \hat{E}^1 \longrightarrow C \longrightarrow 0
\end{align*}
are \underline{related} (denoted $S \sim \hat{S}$) if there exist morphisms $E^k \longrightarrow \hat{E}^k$ for every $1 \leq k \leq i$ such that the diagram
\[ \begin{tikzpicture}
\matrix (m) [matrix of math nodes, row sep=2em, column sep=2em, text height=1.5ex, text depth=0.25ex]
{ 0 & D & E^i & \cdots & E^1 & C & 0 \\ 0 & D & \hat{E}^i & \cdots & \hat{E}^1 & C & 0 \\ };
\path[->]
(m-1-1) edge (m-1-2) (m-1-2) edge (m-1-3) (m-1-3) edge (m-1-4) edge (m-2-3) (m-1-4) edge (m-1-5) (m-1-5) edge (m-1-6) edge (m-2-5) (m-1-6) edge (m-1-7)
(m-2-1) edge (m-2-2) (m-2-2) edge (m-2-3) (m-2-3) edge (m-2-4) (m-2-4) edge (m-2-5) (m-2-5) edge (m-2-6) (m-2-6) edge (m-2-7);
\path[-,font=\scriptsize]
(m-1-2) edge [double, thick, double distance=4pt] (m-2-2)
(m-1-6) edge [double, thick, double distance=4pt] (m-2-6);
\end{tikzpicture} \]
commutes. We shall denote by $\mathcal{E}{xt}^i_{\mathcal{C}}(C,D)$ the set of classes of $i$-extensions under the equivalence relation generated by $\sim$. \\

\begin{remark} Note that in the case $i = 1$, the equivalence relation generated by $\sim$ is $\sim$ itself, since the arrow $E^1 \longrightarrow \hat{E}^1$ is an isomorphism. 
\end{remark}

The set $\mathcal{E}{xt}^i_{\mathcal{C}}(C,D)$ has an Abelian group structure, given by a binary operation known as the \underline{Baer sum}. 

Suppose we are given two classes $[S_1]$ and $[S_2]$, where 
\begin{align*}
S_1 & = 0 \longrightarrow D \longrightarrow E^i_1 \longrightarrow \cdots \longrightarrow E^1_1 \longrightarrow C \longrightarrow 0 \mbox{ and } S_2 = 0 \longrightarrow D \longrightarrow E^i_2 \longrightarrow \cdots \longrightarrow E^1_2 \longrightarrow C \longrightarrow 0.
\end{align*}
The Baer sum $[S_1] +_B [S_2]$ of $[S_1]$ and $[S_2]$ is defined by the following steps:  
\begin{itemize}[noitemsep, topsep=-10pt]
\item[{\bf (1)}] Take the direct sum of $S_1$ and $S_2$, \[ S_1 \oplus S_2 = 0 \longrightarrow D \oplus D \longrightarrow E^i_1 \oplus E^i_2 \longrightarrow \cdots \longrightarrow E^1_1 \oplus E^1_2 \longrightarrow C \oplus C \longrightarrow 0. \] 

\item[{\bf (2)}] After taking the pullback of $\Delta_C : C \longrightarrow C \oplus C$ and $E^1_1 \oplus E^1_2 \longrightarrow C \oplus C$, we get a commutative diagram 
\[ \begin{tikzpicture}
\matrix (m) [matrix of math nodes, row sep=2em, column sep=2em, text height=1.5ex, text depth=0.25ex]
{ 0 & D \oplus D & E^i_1 \oplus E^i_2 & \cdots & E^2_1 \oplus E^2_2 & (E^1_1 \oplus E^1_2) \times_{C \oplus C} C & C & 0 \\ 0 & D \oplus D & E^i_1 \oplus E^i_2 & \cdots & E^2_1 \oplus E^2_2 & E^1_1 \oplus E^1_2 & C \oplus C & 0 \\ };
\path[->]
(m-1-1) edge (m-1-2) (m-1-2) edge (m-1-3) (m-1-3) edge (m-1-4) (m-1-4) edge (m-1-5) (m-1-5) edge (m-1-6) (m-1-6) edge (m-1-7) edge (m-2-6) (m-1-7) edge (m-1-8) edge node[right] {$\Delta_C$} (m-2-7)
(m-2-1) edge (m-2-2) (m-2-2) edge (m-2-3) (m-2-3) edge (m-2-4) (m-2-4) edge (m-2-5) (m-2-5) edge (m-2-6) (m-2-6) edge (m-2-7) (m-2-7) edge (m-2-8);
\path[-,font=\scriptsize]
(m-1-2) edge [double, thick, double distance=4pt] (m-2-2)
(m-1-3) edge [double, thick, double distance=4pt] (m-2-3)
(m-1-5) edge [double, thick, double distance=4pt] (m-2-5);
\end{tikzpicture} \]

\item[{\bf (3)}] Finally, take the pushout of $\nabla_D : D \oplus D \longrightarrow D$ and $C \oplus C \longrightarrow E^i_1 \oplus E^i_2$, and get a commutative diagram 
\[ \begin{tikzpicture}
\matrix (m) [matrix of math nodes, row sep=2em, column sep=2em, text height=1.5ex, text depth=0.25ex]
{ 0 & D \oplus D & E^i_1 \oplus E^i_2 & \cdots & (E^1_1 \oplus E^1_2) \times_{C \oplus C} C & C & 0 \\ 0 & D & D \coprod_{D \oplus D} (E^i_1 \oplus E^i_2) & \cdots & (E^1_1 \oplus E^1_2) \times_{C \oplus C} C & C & 0  \\ };
\path[->]
(m-1-1) edge (m-1-2) (m-1-2) edge (m-1-3) edge node[right] {$\nabla_D$} (m-2-2) (m-1-3) edge (m-1-4) edge (m-2-3) (m-1-4) edge (m-1-5) (m-1-5) edge (m-1-6) edge (m-2-5) (m-1-6) edge (m-1-7) edge (m-2-6) 
(m-2-1) edge (m-2-2) (m-2-2) edge (m-2-3) (m-2-3) edge (m-2-4) (m-2-4) edge (m-2-5) (m-2-5) edge (m-2-6) (m-2-6) edge (m-2-7);
\path[-,font=\scriptsize]
(m-1-6) edge [double, thick, double distance=4pt] (m-2-6);
\end{tikzpicture} \] 
\end{itemize}
The Baer sum $[S_1] +_B [S_2]$ is given by the class of the bottom row in the diagram above. 

The importance of the groups $\mathcal{E}{xt}^i_{\mathcal{C}}(C,D)$ lies in the fact that they can be used to describe the extension functors ${\rm Ext}^i_{\mathcal{C}}(C,D)$. \\

\begin{proposition} If $\mathcal{C}$ is an Abelian category with either enough projective or injective objects, then the groups $\mathcal{E}{xt}^i_{\mathcal{C}}(C,D)$ and ${\rm Ext}^i_{\mathcal{C}}(C,D)$ are isomorphic. 
\end{proposition}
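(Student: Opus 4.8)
The plan is to establish the isomorphism $\mathcal{E}xt^i_{\mathcal{C}}(C,D) \cong \mathrm{Ext}^i_{\mathcal{C}}(C,D)$ by a careful Yoneda-style argument, treating (for concreteness) the case where $\mathcal{C}$ has enough projective objects; the dual case follows by the dual argument working with injective coresolutions. First I would fix an exact projective resolution $\mathbf{P}_\bullet = \cdots \to P_2 \to P_1 \to P_0 \to C \to 0$, so that by Example (the case $\mathcal{F} = \mathcal{P}_{roj}(\mathcal{C})$) we have $\mathrm{Ext}^i_{\mathcal{C}}(C,D) = H^i(\mathrm{Hom}_{\mathcal{C}}(\mathbf{P}_\bullet, D))$. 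The map $\Phi \colon \mathrm{Ext}^i_{\mathcal{C}}(C,D) \to \mathcal{E}xt^i_{\mathcal{C}}(C,D)$ is defined as follows: given an $i$-cocycle $f \colon P_i \to D$ (so $f \circ \partial_{i+1} = 0$), push out the truncated resolution $0 \to K_i \to P_{i-1} \to \cdots \to P_0 \to C \to 0$ (where $K_i = \mathrm{Ker}(\partial_{i-1})$, equivalently $\mathrm{Im}(\partial_i)$) along the induced map $\bar f \colon K_i \to D$, obtaining an $i$-extension $\Phi(f)$ of $C$ by $D$. One then checks that $\Phi$ is well-defined on cohomology classes: a coboundary $f = g \circ \partial_i$ yields an extension related to the split one, using the map constructed from $g \colon P_{i-1} \to D$.

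Next I would construct the inverse, or rather show $\Phi$ is bijective, via the comparison theorem for projective resolutions. Given any $i$-extension $S = 0 \to D \to E^i \to \cdots \to E^1 \to C \to 0$, the projectivity of the $P_j$ allows me to lift the identity on $C$ to a chain map from $\mathbf{P}_\bullet$ (viewed as a resolution of $C$) into $S$ (viewed as a long exact sequence over $C$ with $D$ in degree $i$); the component $P_i \to D$ is a cocycle $\Psi(S)$, and standard diagram-chasing shows $\Psi(S)$ is independent (up to coboundary) of the chosen lift, and that $\Psi$ respects the relation $\sim$ and hence descends to $\mathcal{E}xt^i_{\mathcal{C}}(C,D)$. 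Then I would verify $\Phi \circ \Psi = \mathrm{id}$ and $\Psi \circ \Phi = \mathrm{id}$: the first uses that the pushout of a truncated resolution along the cocycle recovers the given extension up to equivalence (this is where one genuinely uses that $S$ is exact and the comparison map exists in both directions), and the second is essentially immediate from the construction.

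Finally I would check that $\Phi$ (equivalently $\Psi$) is a group homomorphism, i.e. that it carries the addition in cohomology to the Baer sum. This is the most technical point: one shows that the cocycle $f_1 + f_2$ corresponds under $\Phi$ to the Baer sum of $\Phi(f_1)$ and $\Phi(f_2)$ by tracking the direct sum, pullback along $\Delta_C$, and pushout along $\nabla_D$ through the pushout construction defining $\Phi$, using the compatibility of pushouts with direct sums and the fact that $f_1 + f_2 = \nabla_D \circ (f_1 \oplus f_2) \circ \Delta_{P_i}$.

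I expect the main obstacle to be the well-definedness and bijectivity bookkeeping for $i > 1$: unlike the case $i = 1$ (see the Remark), the relation $\sim$ is not symmetric and one must pass to the generated equivalence relation, so showing that $\Psi$ is constant on these classes and that $\Phi\circ\Psi$ and $\Psi\circ\Phi$ are identities requires the full strength of the comparison theorem applied to the "zig-zag" sequences connecting related $i$-extensions, together with the observation that any $i$-extension admits a chain map from $\mathbf{P}_\bullet$ realizing it as a pushout of the canonical truncation. Everything else is a routine, if lengthy, diagram chase that I would not carry out in full detail, citing instead the classical treatment in \cite[Chapters 8]{EJ}.
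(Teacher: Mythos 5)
Your plan is the standard Yoneda/Baer comparison argument and is correct; note that the paper deliberately skips the proof of this proposition and instead proves the generalization for a special pre-covering class $\mathcal{F}$ (Theorem \ref{isosext}), whose proof follows essentially your route: the map denoted $\Phi$ there is your $\Psi$ (lifting the identity of $C$ along the resolution to extract a cocycle), and its surjectivity is established by exactly your pushout construction. Specializing that theorem to $\mathcal{F} = \mathcal{P}_{roj}(\mathcal{C})$ (where every extension is automatically ${\rm Hom}_{\mathcal{C}}(\mathcal{F},-)$-exact) recovers the present statement, so your argument and the paper's are in substance the same.
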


We skip the proof of this (well known) result, since we shall provide a generalization in the next lines. This generalization consists in giving a Baer-like description of ${\rm Ext}^i_{\mathcal{C}}(\mathcal{F}; C, D)$ and ${\rm Ext}^i_{\mathcal{C}}(C, D; \mathcal{G})$, by constructing isomorphisms from them to certain subgroups of $\mathcal{E}{xt}^i_{\mathcal{C}}(C,D)$. \\

\begin{definition} Let $\mathcal{F}$ be a class of objects of an Abelian category $\mathcal{C}$. We shall say that an $i$-extension of $C$ by $D$ is \underline{left-relative to $\mathcal{F}$} if it is ${\rm Hom}_{\mathcal{C}}(\mathcal{F},-)$-exact as a chain complex. Extensions \underline{right-relative to $\mathcal{F}$} are defined dually. We shall denote by $\mathcal{E}{xt}^i_{\mathcal{C}}(\mathcal{F}; C, D)$ (resp. $\mathcal{E}{xt}^i_{\mathcal{C}}(C, D; \mathcal{F})$) the subset of $\mathcal{E}{xt}^i_{\mathcal{C}}(C, D)$ formed by the classes of $i$-extensions of $C$ by $D$ which are left-relative (resp. right-relative) to $\mathcal{F}$. 
\end{definition}

\begin{proposition} $\mathcal{E}{xt}^i_{\mathcal{C}}(\mathcal{F}; C, D)$ and $\mathcal{E}{xt}^i_{\mathcal{C}}(C, D; \mathcal{F})$ are sub-groups of $\mathcal{E}{xt}^i_{\mathcal{C}}(C,D)$. 
\end{proposition}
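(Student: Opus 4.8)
The plan is to verify the subgroup criterion for $\mathcal{E}xt^i_{\mathcal{C}}(\mathcal{F}; C, D)$ inside $\mathcal{E}xt^i_{\mathcal{C}}(C,D)$: the subset is nonempty (it contains the zero class), and it is closed under the Baer sum and under additive inverses. The statement for $\mathcal{E}xt^i_{\mathcal{C}}(C, D; \mathcal{F})$ then follows by dualising, i.e. by rerunning the argument in the opposite category $\mathcal{C}^{\mathrm{op}}$.

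The first step I would carry out is a reduction to short exact sequences. An $i$-extension $S = 0 \to D \to E^i \to \cdots \to E^1 \to C \to 0$ splices canonically into short exact sequences $\sigma_j : 0 \to K_j \to E^j \to K_{j-1} \to 0$ for $1 \leq j \leq i$, where $K_0 = C$, $K_i = D$, and $K_j = {\rm Im}(\partial^S_{j+1})$ for $0 < j < i$. Since ${\rm Hom}_{\mathcal{C}}(F, -)$ is left exact for every object $F$, a short diagram chase shows that ${\rm Hom}_{\mathcal{C}}(F, S)$ is exact if and only if every ${\rm Hom}_{\mathcal{C}}(F, \sigma_j)$ is exact, equivalently if and only if each induced map ${\rm Hom}_{\mathcal{C}}(F, E^j) \to {\rm Hom}_{\mathcal{C}}(F, K_{j-1})$ is surjective. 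Thus $S$ is left-relative to $\mathcal{F}$ precisely when each of these $i$ epimorphisms is ``${\rm Hom}_{\mathcal{C}}(\mathcal{F}, -)$-epic'' in this sense, which localises the whole question to short exact sequences.

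Next I would record three elementary closure properties at this level. (a) A finite direct sum of ${\rm Hom}_{\mathcal{C}}(\mathcal{F}, -)$-exact short exact sequences is again ${\rm Hom}_{\mathcal{C}}(\mathcal{F}, -)$-exact, since ${\rm Hom}_{\mathcal{C}}(F, -)$ commutes with finite biproducts and a direct sum of surjections is a surjection. (b) If $0 \to A \to B \xrightarrow{\epsilon} E \to 0$ is ${\rm Hom}_{\mathcal{C}}(\mathcal{F}, -)$-exact and $\gamma : E' \to E$ is arbitrary, then the pullback $0 \to A \to B \times_E E' \xrightarrow{\epsilon'} E' \to 0$ (again short exact, since pullbacks of epimorphisms are epimorphisms and the kernel is unchanged) is ${\rm Hom}_{\mathcal{C}}(\mathcal{F}, -)$-exact: given $f : F \to E'$ with $F \in \mathcal{F}$, lift $\gamma f$ along $\epsilon$ and use the universal property of the pullback to produce a lift of $f$ along $\epsilon'$. (c) Dually, pushing a ${\rm Hom}_{\mathcal{C}}(\mathcal{F}, -)$-exact short exact sequence $0 \to A \to B \to E \to 0$ out along an arbitrary $A \to A'$ yields a ${\rm Hom}_{\mathcal{C}}(\mathcal{F}, -)$-exact short exact sequence, by the dual lifting argument.

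Finally I would run these observations through the Baer sum recipe, using that each of its three steps modifies the splicing decomposition of the $i$-extension in a controlled way. Starting from $[S_1], [S_2] \in \mathcal{E}xt^i_{\mathcal{C}}(\mathcal{F}; C, D)$: Step (1) forms $S_1 \oplus S_2$, whose component short exact sequences are the direct sums of those of $S_1$ and $S_2$, hence ${\rm Hom}_{\mathcal{C}}(\mathcal{F}, -)$-exact by (a); Step (2) replaces only the degree-$1$ component by its pullback along $\Delta_C$, leaving all others (and the common kernel) unchanged, so the result is still relative to $\mathcal{F}$ by (b); Step (3) replaces only the degree-$i$ component by its pushout along $\nabla_D$, again leaving the rest unchanged, so it stays relative to $\mathcal{F}$ by (c). By the reduction of the second step, $S_1 +_B S_2$ is left-relative to $\mathcal{F}$. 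For the remaining clauses: the zero class is represented by a split $i$-extension, which the additive functor ${\rm Hom}_{\mathcal{C}}(F, -)$ carries to a split, hence exact, complex; and $-[S]$ is represented by the pullback of $S$ along $-{\rm id}_C$, which lies in the subset by (b). This completes the subgroup criterion. The only points that are not pure bookkeeping — and where I would concentrate the actual write-up — are the reduction lemma and the lifting arguments (b) and (c); in particular one must check that the pullback in Step (2) and the pushout in Step (3) genuinely touch a single component of the splicing, which is precisely why the Baer sum construction is arranged as displayed above.
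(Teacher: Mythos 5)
Your proposal is correct, and at its core it follows the same route as the paper: trace relative exactness through the three steps of the Baer sum (direct sum, pullback along $\Delta_C$, pushout along $\nabla_D$), with your closure properties (a), (b), (c) corresponding exactly to the paper's steps {\bf (1)}, {\bf (2)}, {\bf (3)}. That said, your write-up is more complete in two substantive ways. First, the paper proves only the case $i=1$; your reduction lemma --- splicing an $i$-extension into short exact sequences $\sigma_j$ and observing, via left exactness of ${\rm Hom}_{\mathcal{C}}(F,-)$ and the identification ${\rm Hom}_{\mathcal{C}}(F,K_{j-1}) \cong {\rm Ker}({\rm Hom}_{\mathcal{C}}(F,\partial_{j-1}))$, that $S$ is left-relative to $\mathcal{F}$ iff each $\sigma_j$ is --- is exactly what is needed to make the argument go through for all $i$, since the pullback and pushout steps each touch only one component of the splicing. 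Second, the paper verifies only nonemptiness and closure under the Baer sum, which does not by itself establish the subgroup property; your observation that $-[S]$ is represented by the pullback of $S$ along $-{\rm id}_C$, hence lies in the subset by (b), closes that gap. A minor stylistic difference: for the pushout step the paper argues by a diagram chase on the induced commutative diagram of Hom-groups, whereas you lift $f : F \to E$ along the original epimorphism and compose with the canonical map into the pushout; your version is cleaner and avoids the chase. Only be careful with the phrase ``dual lifting argument'' in (c): the literal dual statement concerns ${\rm Hom}_{\mathcal{C}}(-,\mathcal{G})$-exactness, so in the write-up you should spell out the composition-with-the-pushout-map argument rather than appeal to duality.
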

\begin{proof} We only prove that $\mathcal{E}{xt}^i_{\mathcal{C}}(\mathcal{F}; C, D)$ is a sub-group of $\mathcal{E}{xt}^i_{\mathcal{C}}(C,D)$ for the case $i = 1$. First, note that $\mathcal{E}{xt}^1_{\mathcal{C}}(\mathcal{F}; C, D)$ is nonempty since the representative $0 \longrightarrow D \longrightarrow C \oplus D \longrightarrow C \longrightarrow 0$ of the zero element is left-relative to $\mathcal{F}$. Now suppose we are given two extensions of $C$ by $D$ left-relative to $\mathcal{F}$, say $S_1 = (0 \longrightarrow D \longrightarrow E_1 \longrightarrow C \longrightarrow 0)$ and $S_2 = (0 \longrightarrow D \longrightarrow E_2 \longrightarrow C \longrightarrow 0)$. We show that the sequence \[ 0 \longrightarrow {\rm Hom}_{\mathcal{C}}(F, D) \longrightarrow {\rm Hom}_{\mathcal{C}}\left(F, D \coprod_{D \oplus D} \left[ (E_1 \oplus E_2) \times_{C \oplus C} C \right]\right) \longrightarrow {\rm Hom}_{\mathcal{C}}(F, C) \longrightarrow 0 \] is exact for every $F \in \mathcal{F}$. 
\begin{itemize}[noitemsep, topsep=-10pt]
\item[{\bf (1)}] Note that the sequence ${\rm Hom}_{\mathcal{C}}(F, S_1 \oplus S_2)$ is exact since is it isomorphic to the direct sum of ${\rm Hom}_{\mathcal{C}}(F, S_1)$ and ${\rm Hom}_{\mathcal{C}}(F, S_2)$, which are exact. 

\item[{\bf (2)}] To prove $0 \longrightarrow {\rm Hom}_{\mathcal{C}}(F, D \oplus D) \longrightarrow {\rm Hom}_{\mathcal{C}}\left(F, (E_1 \oplus E_2) \times_{C \oplus C} C \right) \longrightarrow {\rm Hom}_{\mathcal{C}}(F, C) \longrightarrow 0$ is exact, it suffices to show that the morphism ${\rm Hom}_{\mathcal{C}}\left(F, (E_1 \oplus E_2) \times_{C \oplus C} C \right) \longrightarrow {\rm Hom}_{\mathcal{C}}(F, C)$ is surjective, since the functor ${\rm Hom}_{\mathcal{C}}(F, -)$ is left exact. Suppose we are given a morphism $f : F \longrightarrow C$. Then $\Delta_C \circ f \in {\rm Hom}_{\mathcal{C}}(F, C \oplus C)$. Since ${\rm Hom}_{\mathcal{C}}(F, S_1 \oplus S_2)$ is exact, there exists a morphism $g : F \longrightarrow E_1 \oplus E_2$ such that $\Delta_C \circ f = (\beta_1 \oplus \beta_2) \circ g$. It follows by the universal property of pullbacks that there exists a unique morphism $h : F \longrightarrow (E_1 \oplus E_2) \times_{C \oplus C} C$ such that the following diagram commutes:
\[ \begin{tikzpicture}
\matrix (m) [matrix of math nodes, row sep=2em, column sep=2em, text height=1.5ex, text depth=0.25ex]
{ F \\ & (E_1 \oplus E_2) \times_{C \oplus C} C & C \\ & E_1 \oplus E_2 & C \oplus C \\ };
\path[->]
(m-1-1) edge [bend left = 30] node[above,sloped] {$f$} (m-2-3) edge [bend right=30] node[below,sloped] {$g$} (m-3-2)
(m-2-2) edge (m-2-3) edge (m-3-2)
(m-2-3) edge node[right] {$\Delta_C$} (m-3-3)
(m-3-2) edge node[below] {$\beta_1 \oplus \beta_2$} (m-3-3);
\path[dotted,->]
(m-1-1) edge node[above,sloped] {$\exists \mbox{! } h$} (m-2-2);
\end{tikzpicture} \]
Hence, $f = {\rm Hom}_{\mathcal{C}}(F, (E_1 \oplus E_2) \times_{C \oplus C} C \longrightarrow C)(h)$. 

\item[{\bf (3)}] Finally, we show that the morphism ${\rm Hom}_{\mathcal{C}}(F, D \coprod_{D \oplus D} \left[ (E_1 \oplus E_2) \times_{C \oplus C} C \right]) \longrightarrow {\rm Hom}_{\mathcal{C}}(F, C)$ is surjective. We have the following commutative diagram
\[ \begin{tikzpicture}
\matrix (m) [matrix of math nodes, row sep=2em, column sep=2em, text height=1.5ex, text depth=0.25ex]
{ 0 & {\rm Hom}_{\mathcal{C}}(F, D \oplus D) & {\rm Hom}_{\mathcal{C}}(F, (E_1 \oplus E_2) \times_{C \oplus C} C) & {\rm Hom}_{\mathcal{C}}(F, C) & 0 \\ 0 & {\rm Hom}_{\mathcal{C}}(F, D) & {\rm Hom}_{\mathcal{C}}(F, D \coprod_{D \oplus D} \left[ (E_1 \oplus E_2) \times_{C \oplus C} C \right]) & {\rm Hom}_{\mathcal{C}}(F, C) & 0 \\ };
\path[->]
(m-1-1) edge (m-1-2) (m-1-2) edge (m-1-3) edge (m-2-2) (m-1-3) edge (m-1-4) edge (m-2-3) (m-1-4) edge (m-1-5)
(m-2-1) edge (m-2-2) (m-2-2) edge (m-2-3) (m-2-3) edge (m-2-4) (m-2-4) edge (m-2-5);
\path[-,font=\scriptsize]
(m-1-4) edge [double, thick, double distance=4pt] (m-2-4);
\end{tikzpicture} \]
where the top row is exact. Using diagram chasing, it is not hard to show that the bottom row is also exact. 
\end{itemize}
Therefore, $[S_1] +_B [S_2] \in \mathcal{E}{xt}^1_{\mathcal{C}}(\mathcal{F}; C, D)$. 
\end{proof}

Now we focus on proving that ${\rm Ext}^i_{\mathcal{C}}(\mathcal{F}; C,D)$ is isomorphic to $\mathcal{E}{xt}^i_{\mathcal{C}}(\mathcal{F}; C, D)$. As a first approach, it is well known that an isomorphism between ${\rm Ext}^i_{\mathcal{C}}(C,D)$ and $\mathcal{E}{xt}^i_{\mathcal{C}}(C,D)$ can be constructed by using an exact (left) projective resolution of $C$ (This is possible in Abelian categories with enough projective objects). So we may think of considering left $\mathcal{F}$-resolutions of $C$ to get a map from $\mathcal{E}{xt}^i_{\mathcal{C}}(\mathcal{F}; C, D)$ to ${\rm Ext}^i_{\mathcal{C}}(\mathcal{F}; C,D)$. However, left $\mathcal{F}$-resolutions need not be exact. This limitation can be avoided if we impose an extra condition on $\mathcal{F}$, related to a special type of pre-covering classes. \\

\begin{definition} Let $\mathcal{F}$ be a class of objects in an Abelian category $\mathcal{C}$. 
\begin{itemize}[noitemsep, topsep=-10pt]
\item[{\bf (1)}] The \underline{left orthogonal class} of $\mathcal{F}$ is defined as $\mathcal{F}^\perp := \{ D \in {\rm Ob}(\mathcal{C}) \mbox{ $:$ } {\rm Ext}^1_{\mathcal{C}}(F, D) = 0, \mbox{ $\forall$ }F \in \mathcal{F} \}$. 

\item[{\bf (2)}] {\rm \cite[Definition 7.1.6]{EJ}} A morphism $F \longrightarrow C$, with $F \in \mathcal{F}$, is a special $\mathcal{F}$-pre-cover of $C$ if it is an epimorphism and if ${\rm Ker}(F \longrightarrow C) \in \mathcal{F}^\perp$.  

\item[{\bf (3)}] The class $\mathcal{F}$ is said to be a \underline{special pre-covering class} if every object has a special $\mathcal{F}$-pre-cover.
\end{itemize}
The concepts of \underline{right orthogonal class}, \underline{special pre-envelope} and \underline{special pre-enveloping class} are dual. 
\end{definition}

Note that every special pre-covering (resp. special pre-enveloping) class is a pre-covering class (resp. pre-enveloping class). The following lemma is easy to prove. \\

\begin{lemma} Let $\mathcal{F}$ be a special pre-covering class. Then every object of $\mathcal{C}$ has an exact left $\mathcal{F}$-resolution. \\
\end{lemma}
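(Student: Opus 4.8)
The plan is to construct the resolution one step at a time by iterating special $\mathcal{F}$-pre-covers, and then to verify both exactness over $\mathcal{C}$ and ${\rm Hom}_{\mathcal{C}}(\mathcal{F},-)$-exactness by splicing the resulting short exact sequences. Throughout I will use only the defining property of $\mathcal{F}^\perp$, namely that ${\rm Ext}^1_{\mathcal{C}}(F,D)=0$ forces every short exact sequence $0 \to D \to E \to F \to 0$ to split.

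First, given an object $C$, choose a special $\mathcal{F}$-pre-cover $F_0 \to C$; by definition it is an epimorphism, so I obtain a short exact sequence $0 \to K_0 \to F_0 \to C \to 0$ with $F_0 \in \mathcal{F}$ and $K_0 := {\rm Ker}(F_0 \to C) \in \mathcal{F}^\perp$. Applying the same construction to $K_0$, and iterating, I obtain for every $m \geq 0$ a short exact sequence $0 \to K_m \to F_m \to K_{m-1} \to 0$ with $F_m \in \mathcal{F}$ and $K_m \in \mathcal{F}^\perp$, where $K_{-1} := C$. Splicing these along the composites $F_m \twoheadrightarrow K_{m-1} \hookrightarrow F_{m-1}$ produces a complex $\cdots \to F_1 \to F_0 \to C \to 0$ in which ${\rm Im}(F_m \to F_{m-1}) = K_{m-1} = {\rm Ker}(F_{m-1} \to F_{m-2})$ for all $m \geq 1$ and $F_0 \to C$ is epi; hence the complex is exact over $\mathcal{C}$, and all its terms lie in $\mathcal{F}$.

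It remains to check ${\rm Hom}_{\mathcal{C}}(\mathcal{F},-)$-exactness. Fix $F \in \mathcal{F}$. Since ${\rm Hom}_{\mathcal{C}}(F,-)$ is left exact, for each $m$ the sequence $0 \to {\rm Hom}_{\mathcal{C}}(F, K_m) \to {\rm Hom}_{\mathcal{C}}(F, F_m) \to {\rm Hom}_{\mathcal{C}}(F, K_{m-1})$ is exact, so it suffices to show the last map is surjective. Given $g : F \to K_{m-1}$, pull back the sequence $0 \to K_m \to F_m \to K_{m-1} \to 0$ along $g$ to get $0 \to K_m \to P \to F \to 0$; because $K_m \in \mathcal{F}^\perp$ this sequence splits, and composing a section $F \to P$ with $P \to F_m$ yields a lift of $g$. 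Thus each $0 \to K_m \to F_m \to K_{m-1} \to 0$ remains short exact after applying ${\rm Hom}_{\mathcal{C}}(F,-)$, and splicing these exact sequences of Abelian groups shows ${\rm Hom}_{\mathcal{C}}(F, \cdots \to F_1 \to F_0 \to C \to 0)$ is exact. As $F \in \mathcal{F}$ was arbitrary, the complex is an exact left $\mathcal{F}$-resolution of $C$.

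The only genuinely delicate point is this last step: one should resist invoking a long exact $\mathrm{Ext}$-sequence (which would implicitly want enough projectives) and instead argue directly via the pullback-and-split trick, which is valid in any Abelian category. Everything else is the routine dimension-shifting obtained by splicing short exact sequences.
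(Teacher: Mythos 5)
Your proof is correct and is exactly the standard argument the paper has in mind (the paper omits the proof entirely, remarking only that the lemma is easy): iterate special pre-covers to get short exact sequences $0 \to K_m \to F_m \to K_{m-1} \to 0$ with $K_m \in \mathcal{F}^\perp$, splice them for exactness, and use the pullback-and-split trick to see that each $F_m \to K_{m-1}$ remains epi after applying ${\rm Hom}_{\mathcal{C}}(F,-)$. Your closing remark is also well taken --- the pullback argument is precisely why every special $\mathcal{F}$-pre-cover is an $\mathcal{F}$-pre-cover, a fact the paper asserts without proof just before the lemma.
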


\begin{theorem}\label{isosext} If $\mathcal{F}$ is a special pre-covering class of objects in an Abelian category $\mathcal{C}$, then there is a group isomorphism between ${\rm Ext}^i_{\mathcal{C}}(\mathcal{F}; C, D)$ and $\mathcal{E}{xt}^i_{\mathcal{C}}(\mathcal{F}; C, D)$, for every pair of objects $C$ and $D$. Dually, if $\mathcal{G}$ is a special pre-enveloping class, then ${\rm Ext}^i_{\mathcal{C}}(C, D; \mathcal{G})$ and $\mathcal{E}{xt}^i_{\mathcal{C}}(C, D; \mathcal{G})$ are isomorphic. 
\end{theorem}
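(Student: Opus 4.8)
The plan is to mimic the classical proof that $\mathcal{E}xt^i_{\mathcal{C}}(C,D) \cong {\rm Ext}^i_{\mathcal{C}}(C,D)$, being careful to check that the two mutually inverse maps restrict correctly to the relative subgroups. Fix a special pre-covering class $\mathcal{F}$ and objects $C,D$. By the preceding lemma, $C$ admits an \emph{exact} left $\mathcal{F}$-resolution $\cdots \longrightarrow F_1 \longrightarrow F_0 \longrightarrow C \longrightarrow 0$; write $\textbf{\textit{F}}_\bullet$ for its truncation. Since it is ${\rm Hom}_{\mathcal{C}}(\mathcal{F},-)$-exact by definition of left $\mathcal{F}$-resolution, applying ${\rm Hom}_{\mathcal{C}}(-,D)$ and taking $i$th cohomology computes ${\rm Ext}^i_{\mathcal{C}}(\mathcal{F};C,D)$.

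First I would define the map $\Phi \colon {\rm Ext}^i_{\mathcal{C}}(\mathcal{F};C,D) \longrightarrow \mathcal{E}xt^i_{\mathcal{C}}(\mathcal{F};C,D)$ exactly as in the absolute case: a cocycle in ${\rm Hom}_{\mathcal{C}}(F_i,D)$ is a morphism $\varphi\colon F_i \longrightarrow D$ vanishing on the image of $F_{i+1}$; forming the pushout of $F_i \hookleftarrow {\rm Im}(F_{i+1}\to F_i)$ — equivalently, pushing out the exact sequence $0 \to K_i \to F_{i-1} \to \cdots \to F_0 \to C \to 0$ (with $K_i = {\rm Ker}(F_{i-1}\to F_{i-2})$) along $K_i \to D$ induced by $\varphi$ — yields an $i$-extension of $C$ by $D$. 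The key point I must verify is that this $i$-extension is left-relative to $\mathcal{F}$: the first $i-1$ terms $F_0,\dots,F_{i-2}$ are unchanged and the resolution is ${\rm Hom}_{\mathcal{C}}(\mathcal{F},-)$-exact there, while the last two terms fit into a pushout square whose top row $0 \to K_i \to F_{i-1} \to K_{i-1} \to 0$ becomes $0 \to D \to P \to K_{i-1} \to 0$; one checks ${\rm Hom}_{\mathcal{C}}(F,-)$-exactness of the new sequence by a diagram chase combining the pushout square (which stays a pushout after applying ${\rm Hom}_{\mathcal{C}}(F,-)$ since it is right exact on the relevant short exact sequences) with the relative exactness of the original resolution — this is essentially the same diagram-chasing argument used in the proof of the Proposition that $\mathcal{E}xt^1_{\mathcal{C}}(\mathcal{F};C,D)$ is a subgroup. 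Standard arguments show $\Phi$ is well-defined on cohomology classes and is a group homomorphism (sum of cocycles corresponds to Baer sum).

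For the inverse, given a class $[S]$ with $S = (0 \to D \to E^i \to \cdots \to E^1 \to C \to 0)$ left-relative to $\mathcal{F}$, I would use the comparison/lifting property: because $F_0 \to C$ is an $\mathcal{F}$-cover-type map and each $F_j \in \mathcal{F}$, and because the complex ${\rm Hom}_{\mathcal{C}}(F_j, S)$ is \emph{exact} (this is precisely where left-relativity of $S$ is used, making the Horseshoe/comparison lemma applicable termwise), one lifts ${\rm id}_C$ to a chain map $\textbf{\textit{F}}_\bullet \to S$; restricting to degree $i$ gives a morphism $F_i \to D$, which is a cocycle, and its class in ${\rm Ext}^i_{\mathcal{C}}(\mathcal{F};C,D)$ is independent of the choice of lift (two lifts are chain homotopic, again because the relevant ${\rm Hom}$-complexes are exact). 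Call this map $\Psi$. Then I would check $\Phi\circ\Psi = {\rm id}$ and $\Psi\circ\Phi = {\rm id}$ by the usual pushout/pullback manipulations, and that $\Psi$ is additive.

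The main obstacle, and the place deserving the most care, is the interplay between the two distinct exactness notions in play: the $\mathcal{F}$-resolution $\textbf{\textit{F}}_\bullet \to C$ is genuinely exact but only \emph{relatively} exact after applying ${\rm Hom}_{\mathcal{C}}(\mathcal{F},-)$ is automatic, whereas a relative extension $S$ is exact \emph{and} ${\rm Hom}_{\mathcal{C}}(\mathcal{F},-)$-exact — and the comparison lemma producing the chain map $\textbf{\textit{F}}_\bullet \to S$ requires genuine exactness of $S$ in the target together with enough lifting maps out of the $F_j$'s, which is exactly guaranteed by ${\rm Hom}_{\mathcal{C}}(F_j,S)$ being exact. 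I would state and prove a small lemma isolating this comparison-and-homotopy statement for $\mathcal{F}$-resolutions mapping into ${\rm Hom}_{\mathcal{C}}(\mathcal{F},-)$-exact complexes, and then the rest of the argument is bookkeeping identical to the absolute case, with the single extra verification (flagged above) that $\Phi$ lands in the relative subgroup. The dual statement for a special pre-enveloping class $\mathcal{G}$ follows by the evident dualization, using exact right $\mathcal{G}$-resolutions of $D$ and ${\rm Hom}_{\mathcal{C}}(-,\mathcal{G})$-exact extensions.
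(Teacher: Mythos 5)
Your proposal is correct and takes essentially the same approach as the paper: the paper defines the lifting map $\mathcal{E}xt^1_{\mathcal{C}}(\mathcal{F};C,D)\longrightarrow {\rm Ext}^1_{\mathcal{C}}(\mathcal{F};C,D)$ (your $\Psi$) and establishes surjectivity by exactly the pushout construction you use to define $\Phi$, including the diagram chase showing the pushed-out extension remains ${\rm Hom}_{\mathcal{C}}(\mathcal{F},-)$-exact. The two key verifications you flag --- exactness of ${\rm Hom}_{\mathcal{C}}(F_j,S)$ enabling the comparison lift, and left-relativity of the pushout --- are precisely the points the paper's proof turns on.
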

\begin{proof} We only construct an isomorphism between ${\rm Ext}^1_{\mathcal{C}}(\mathcal{F}; C, D)$ and $\mathcal{E}{xt}^1_{\mathcal{C}}(\mathcal{F}; C, D)$. Consider a representative $S = 0 \longrightarrow D \stackrel{\alpha}\longrightarrow E \stackrel{\beta}\longrightarrow C \longrightarrow 0$ of a class in $\mathcal{E}{xt}^1_{\mathcal{C}}(\mathcal{F}; C, D)$. Since $\mathcal{F}$ is special pre-covering, we can obtain an exact left $\mathcal{F}$-resolution $\cdots \longrightarrow F_1 \stackrel{f_1}\longrightarrow F_0 \stackrel{f_0}\longrightarrow C \longrightarrow 0$. Recall ${\rm Ext}^1_{\mathcal{C}}(\mathcal{F}; C, D) = {\rm Ker}({\rm Hom}_{\mathcal{C}}(f_2, D)) / {\rm Im}({\rm Hom}_{\mathcal{C}}(f_1, D))$.  Since $S$ is ${\rm Hom}_{\mathcal{C}}(\mathcal{F}, -)$-exact, the sequence ${\rm Hom}_{\mathcal{C}}(F_0, S)$ is also exact. So there exists a morphism $g_0 : F_0 \longrightarrow E$ such that $f_0 = \beta \circ g_0$. Note that $\beta \circ (g_0 \circ f_1) = 0$, and since $S$ is exact, there exists a unique homomorphism $g_S : F_1 \longrightarrow D$ such that $\alpha \circ g_S = g_0 \circ f_1$. 
\[ \begin{tikzpicture}
\matrix (m) [matrix of math nodes, row sep=2em, column sep=2em, text height=1.5ex, text depth=0.25ex]
{ & & & & 0 \\ & & & & D \\ & & & & E \\ \cdots & F_2 & F_1 & F_0 & C & 0 \\ & & & & 0 \\ };
\path[->]
(m-1-5) edge (m-2-5)
(m-4-5) edge (m-5-5)
(m-4-1) edge (m-4-2) (m-4-2) edge node[below] {$f_2$} (m-4-3) (m-4-3) edge node[below] {$f_1$} (m-4-4) (m-4-4) edge node[below] {$f_0$} (m-4-5) (m-4-5) edge (m-4-6)
(m-4-4) edge node[above,sloped] {$g_0$} (m-3-5) (m-4-3) edge node[above,sloped] {$g_S$} (m-2-5)
(m-2-5) edge node[left] {$\alpha$} (m-3-5)
(m-3-5) edge node[left] {$\beta$} (m-4-5);
\end{tikzpicture} \]
On the other hand, ${\rm Hom}_{\mathcal{C}}(f_2, D)(g_S) = g_S \circ f_2$, and $\alpha \circ (g_S \circ f_2) = g_0 \circ f_1 \circ f_2 = 0$. Since $\alpha$ is a monomorphism, we have $g_S \circ f_2 = 0$. Then $g_S \in {\rm Ker}({\rm Hom}_{\mathcal{C}}(f_2, D))$. One can check that the map 
\begin{align*}
\Phi : \mathcal{E}{xt}^1_{\mathcal{C}}(\mathcal{F}; C, D) & \longrightarrow {\rm Ext}^1_{\mathcal{C}}(\mathcal{F}; C,D) \\ 
[S] & \mapsto g_S + {\rm Im}({\rm Hom}_{\mathcal{C}}(f_1, D))
\end{align*} 
is a well defined group homomorphism, where $g_S + {\rm Im}({\rm Hom}_{\mathcal{C}}(f_1, D))$ is the class of $g_S$ in ${\rm Ext}^1_{\mathcal{C}}(\mathcal{F}; C, D)$.  

Now we show $\Phi$ is monic. Suppose $S = 0 \longrightarrow D \stackrel{\alpha}\longrightarrow E \stackrel{\beta}\longrightarrow C \longrightarrow 0$ is a representative such that $g_S + {\rm Im}({\rm Hom}_{\mathcal{C}}(f_1, D)) = \Phi([S]) = 0 + {\rm Im}({\rm Hom}_{\mathcal{C}}(f_1, D))$. Then $g_S = r \circ f_1$ for some morphism $r : F_0 \longrightarrow D$. It follows $(g_0 - \alpha \circ r) \circ f_1 = 0$ and $\beta \circ (g_0 - \alpha \circ r) = f_0$. Hence we may assume $g_s = 0$.  Note that there is a unique morphism $k_0 : C \longrightarrow E$ such that $k_0 \circ f_0 = g_0$, since $g_0 \circ f_1 = 0$ and the left $\mathcal{F}$-resolution of $C$ is exact. It follows $(\beta \circ k_0) \circ f_0 = f_0$ and so $\beta \circ k_0 = {\rm id}_{C}$, since $f_0$ is epic. 

To show that $\Phi$ is also epic, let $h + {\rm Im}({\rm Hom}_{\mathcal{C}}(f_1, D)) \in {\rm Ext}^1_{\mathcal{C}}(\mathcal{F}; C, D)$. Then we have $h \circ f_2 = 0$, and so there exists a unique morphism $h' : {\rm Ker}(f_0) \longrightarrow D$ such that $h' \circ \widehat{f_1} = h$, where $f_1$ is written as the epic-monic factorization $F_1 \stackrel{\widehat{f_1}}\longrightarrow {\rm Im}(f_1) \stackrel{j_0}\longrightarrow D$. Taking the pushout of $j_0 : {\rm Ker}(f_0) \longrightarrow F_0$ and $h'$, we get the following commutative diagram with exact rows: 
\[ \begin{tikzpicture}
\matrix (m) [matrix of math nodes, row sep=2em, column sep=2em, text height=1.5ex, text depth=0.25ex]
{ 0 & {\rm Ker}(f_0) & F_0 & C & 0 \\ 0 & D & D \coprod_{{\rm Ker}(f_0)} F_0 & C & 0 \\ };
\path[->]
(m-1-1) edge (m-1-2) (m-1-4) edge (m-1-5)
(m-2-1) edge (m-2-2) (m-2-4) edge (m-2-5)
(m-1-2) edge node[above] {$j_0$} (m-1-3)
(m-2-2) edge node[below] {$\alpha$} (m-2-3)
(m-1-3) edge node[above] {$f_0$} (m-1-4)
(m-2-3) edge node[below] {$\beta$} (m-2-4)
(m-1-2) edge node[left] {$h'$} (m-2-2)
(m-1-3) edge node[left] {$i$} (m-2-3);
\path[-,font=\scriptsize]
(m-1-4) edge [double, thick, double distance=4pt] (m-2-4);
\end{tikzpicture} \]
One can check that the following diagram commutes:
\[ \begin{tikzpicture}
\matrix (m) [matrix of math nodes, row sep=2em, column sep=2em, text height=1.5ex, text depth=0.25ex]
{ & & & & & 0 \\ & & & & & D \\ & & & & & D \coprod_{{\rm Ker}(f_0)} F_0 \\ \cdots & F_2 & F_1 & {\rm Ker}(f_0) & F_0 & C & 0 \\ & & & & & 0 \\ };
\path[->]
(m-1-6) edge (m-2-6)
(m-4-6) edge (m-5-6)
(m-2-6) edge node[right] {$\alpha$} (m-3-6)
(m-3-6) edge node[right] {$\beta$} (m-4-6)
(m-4-1) edge (m-4-2) (m-4-2) edge node[above] {$f_2$} (m-4-3) (m-4-3) edge node[above] {$\widehat{f_1}$} (m-4-4) (m-4-4) edge node[above] {$j_0$} (m-4-5) (m-4-5) edge node[above] {$f_0$} (m-4-6) (m-4-6) edge (m-4-7)
(m-4-3) edge [bend left=30] node[above,sloped] {$h' \circ \widehat{f_1}$} (m-2-6)
(m-4-4) edge node[above,sloped] {$h'$} (m-2-6)
(m-4-5) edge node[above,sloped] {$i$} (m-3-6)
(m-4-3) edge [bend right=30] node[below,sloped] {$f_1$} (m-4-5);
\end{tikzpicture} \]
We have the following commutative diagram with exact rows:
\[ \begin{tikzpicture}
\matrix (m) [matrix of math nodes, row sep=2em, column sep=2em, text height=1.5ex, text depth=0.25ex]
{ \cdots & F_2 & F_1 & F_0 & C & 0 \\ & 0 & D & D \coprod_{{\rm Ker}(f_0)} F_0 & C & 0 \\ };
\path[->]
(m-1-1) edge (m-1-2) (m-1-2) edge node[above] {$f_2$} (m-1-3) (m-1-3) edge node[above] {$f_1$} (m-1-4) edge node[left] {$h$} (m-2-3) (m-1-4) edge node[above] {$f_0$} (m-1-5) edge node[left] {$i$} (m-2-4) (m-1-5) edge (m-1-6)
(m-2-2) edge (m-2-3) (m-2-3) edge node[below] {$\alpha$} (m-2-4) (m-2-4) edge node[below] {$\beta$} (m-2-5) (m-2-5) edge (m-2-6);
\path[-,font=\scriptsize]
(m-1-5) edge [double, thick, double distance=4pt] (m-2-5);
\end{tikzpicture} \]
To show that the bottom row is ${\rm Hom}_{\mathcal{C}}(\mathcal{F}, -)$-exact, it suffices to verify that for every $F \in \mathcal{F}$, the homomorphism ${\rm Hom}_{\mathcal{C}}(F, D \coprod_{{\rm Ker}(f_0)} F_0 ) \longrightarrow {\rm Hom}_{\mathcal{C}}(F, C)$ is surjective. The diagram \\
\[ \begin{tikzpicture}
\matrix (m) [matrix of math nodes, row sep=2em, column sep=1.5em, text height=1.5ex, text depth=0.25ex]
{ \cdots & {\rm Hom}_{\mathcal{C}}(F, F_2) & {\rm Hom}_{\mathcal{C}}(F, F_1) & {\rm Hom}_{\mathcal{C}}(F, F_0) & {\rm Hom}_{\mathcal{C}}(F, C) & 0 \\ & 0 & {\rm Hom}_{\mathcal{C}}(F, D) & {\rm Hom}_{\mathcal{C}}(F, D \coprod_{{\rm Ker}(f_0)} F_0) & {\rm Hom}_{\mathcal{C}}(F, C) & 0 \\ };
\path[->]
(m-1-1) edge (m-1-2) (m-1-2) edge (m-1-3) (m-1-3) edge (m-1-4) edge (m-2-3) (m-1-4) edge (m-1-5) edge (m-2-4) (m-1-5) edge (m-1-6)
(m-2-2) edge (m-2-3) (m-2-3) edge (m-2-4) (m-2-4) edge (m-2-5) (m-2-5) edge (m-2-6);
\path[-,font=\scriptsize]
(m-1-5) edge [double, thick, double distance=4pt] (m-2-5);
\end{tikzpicture} \]
is commutative in the category of Abelian groups, where the top row is exact, so \[ {\rm Hom}_{\mathcal{C}}(F, D \coprod_{{\rm Ker}(f_0)} F_0) \longrightarrow {\rm Hom}_{\mathcal{C}}(F, C) \] is onto. Then 
\begin{align*}
h + {\rm Im}({\rm Hom}_{\mathcal{C}}(f_1, D)) & = h' \circ \widehat{f_1} + {\rm Im}({\rm Hom}_{\mathcal{C}}(f_1, D)) = \Phi( [0 \longrightarrow D \longrightarrow D \coprod_{{\rm Ker}(f_0)} F_0 \longrightarrow C \longrightarrow 0] ). 
\end{align*}
\end{proof}

\begin{remark} In the previous theorem, note that if $\mathcal{F}$ is a pre-covering class (not necessarily special), then the map $\Phi : \mathcal{E}xt^1_{\mathcal{C}}(\mathcal{F}; C, D) \longrightarrow {\rm Ext}^1_{\mathcal{C}}(\mathcal{F}; C, D)$ defined in the proof is a group monomorphism. The fact that $\mathcal{F}$ is special is used to show that $\Phi$ is also onto. 
\end{remark}


\section{Relative extensions and disk complexes}

Suppose we are given an Abelian category $\mathcal{C}$, an object $C$ in $\mathcal{C}$ and a chain complex $X$ over $\mathcal{C}$. In \cite[Lemma 3.1]{Gil}, J. Gillespie proved that the groups ${\rm Ext}^1_{\Complexes}(D^m(C),X)$ and ${\rm Ext}^1_{\mathcal{C}}(C,X_m)$ are naturally isomorphic, using the Baer description of these extension functors. On the other hand, it is also possible to prove this result describing Ext as right derived functors, as it appears in \cite[Proposition 2.1.3]{EJ2}. Dually, there exists a natural isomorphism between ${\rm Ext}^1_\Complexes(X,D^{m+1}(C))$ and ${\rm Ext}^1_{\mathcal{C}}(X_m, C)$. The goal of this section is to show that this isomorphisms have their versions in the context of relative extensions. We have to point out that if we are given a class $\mathcal{F}$ of objects of $\mathcal{C}$, then we need to consider an appropriate class of chain complexes induced $\mathcal{F}$. For our purposes, we are going to consider the following induced classes of chain complexes. \\

\begin{definition} Let $\mathcal{F}$ be a class of objects in an Abelian category $\mathcal{C}$. A chain complex $X$ over $\mathcal{C}$ is:
\begin{itemize}[noitemsep, topsep=-10pt]
\item[{\bf (1)}] {\rm \cite[Definition 3.3]{Gil}} An \underline{$\mathcal{F}$-complex} if $X$ is exact and $Z_m(X) \in \mathcal{F}$ for every $m \in \mathbb{Z}$.

\item[{\bf (2)}] {\rm \cite[Definition 3.1]{Gillespie}} A \underline{degreewise $\mathcal{F}$-complex} if $X_m \in \mathcal{F}$ for every $m \in \mathbb{Z}$. 
\end{itemize}
We shall denote by $\widetilde{\mathcal{F}}$ and ${\rm dw}\widetilde{\mathcal{F}}$ the classes of $\mathcal{F}$-complexes and degreewise $\mathcal{F}$-complexes, respectively. 
\end{definition}

Many interesting examples of (special) pre-covering and pre-enveloping classes of complexes are $\mathcal{F}$-complexes or degreewise $\mathcal{F}$-complexes, for some pre-covering or pre-enveloping class of objects $\mathcal{F}$. As a first example, recall that a chain complex is projective if it is a $\mathcal{P}roj(\mathcal{C})$-complex (see \cite[Theorem 10.42]{Rotman}). Moreover, $\mathcal{P}roj(\mathcal{C})$-complexes form a special pre-covering class if $\mathcal{C}$ has enough projective objects. The same applies to the class of $\mathcal{F}lat$-complexes, where $\mathcal{F}lat$ is the class of flat modules over a ring (see \cite[Corollary 4.10]{Gil}). Two other examples of pre-covering classes are given by the degreewise projective complexes (see \cite[Theorem 4.5]{Rada}) and the degreewise flat complexes (see \cite[Theorem 4.3]{Aldrich}). Dually, injective and degreewise injective complexes are pre-enveloping. We shall comment more examples in Section 6 in the setting provided by Gorenstein rings. Notice that from these examples it is natural to think that every pre-covering (pre-enveloping) class of objects induces a pre-covering (pre-enveloping) class of complexes. Complete cotorsion pairs provide a positive answer for special pre-covering (resp. special pre-enveloping) classes of modules (see \cite[Chapter 7]{EJ2}), but the author is not aware if this remains true in the non-special case. 

Using the Baer description presented in Section 3, we present the first generalization of \cite[Lemma 3.1]{Gil}. \\

\begin{proposition}\label{isosdw} Let $\mathcal{C}$ be an Abelian category and $\mathcal{F}$ and $\mathcal{G}$ be classes of objects of $\mathcal{C}$. If $C \in {\rm Ob}(\mathcal{C})$ and $X, Y \in \Complexes$, then we have natural isomorphisms: 
\begin{itemize}[noitemsep, topsep=-10pt]
\item[{\bf (1)}] $\mathcal{E}xt^i_{\mathcal{C}}(\mathcal{F}; X_m, C) \rightarrow  \mathcal{E}xt^i_{\Complexes}({\rm dw}\widetilde{\mathcal{F}}; X, D^{m+1}(C))$. 

\item[{\bf (2)}] $\mathcal{E}xt^i_{\mathcal{C}}(X_m, C; \mathcal{G}) \rightarrow \mathcal{E}xt^i_{\Complexes}(X,D^{m+1}(C); {\rm dw}\widetilde{\mathcal{G}})$.

\item[{\bf (3)}]$\mathcal{E}xt^i_{\mathcal{C}}(\mathcal{F}; C,Y_m) \rightarrow \mathcal{E}xt^i_{\Complexes}({\rm dw}\widetilde{\mathcal{F}}; D^m(C),Y)$.

\item[{\bf (4)}] $\mathcal{E}xt^i_{\mathcal{C}}(C, Y_m; \mathcal{G}) \rightarrow \mathcal{E}xt^i_{\Complexes}(D^m(C), Y; {\rm dw}\widetilde{\mathcal{G}})$.
\end{itemize}
\end{proposition}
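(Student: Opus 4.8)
The plan is to reduce Proposition \ref{isosdw} to a degreewise version of the adjunction isomorphisms of the opening Proposition, combined with a dimension-shifting argument on $i$-extensions, and then to check that the relative exactness conditions correspond under the resulting bijection. I will focus on item \textbf{(1)}, since \textbf{(3)} is dual to \textbf{(1)} in the usual sense (replacing $X_m$-type adjunctions of $D^{m+1}$ with $D^m$), and \textbf{(2)}, \textbf{(4)} are obtained by dualizing the roles of $\mathcal{F}$ and $\mathcal{G}$ (left-relative versus right-relative), formally applying the arguments in the opposite category $\mathcal{C}^{\rm op}$, under which disk complexes go to disk complexes and $(-)_m$ to $(-)_m$ up to reindexing.

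First I would set up the underlying bijection on $i$-extensions. Given an $i$-extension $S = 0 \to C \to E^i \to \cdots \to E^1 \to X_m \to 0$ in $\mathcal{C}$ (using the convention of the paper, $X_m$ plays the role of ``$C$'' and $C$ that of ``$D$''), I want to manufacture an $i$-extension of $X$ by $D^{m+1}(C)$ in $\Complexes$. The key observation is the splicing/concatenation trick already implicit in Gillespie's proof: an $i$-extension of $X$ by $D^{m+1}(C)$ can be built by taking, in each degree, a suitable extension of $X_m$ and $X_{m-1}$ (since $D^{m+1}(C)$ is concentrated in degrees $m+1$ and $m$ with identity differential, and ${\rm Hom}_{\mathcal C}(X_{m}, -)$ captures maps into it via Proposition (1)). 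Concretely I would build the complex of chain complexes whose degree-$m$ row is $S$ itself, whose degree-$(m+1)$ row is the ``disk'' $0 \to C \xrightarrow{=} C \to 0 \to \cdots \to 0$ glued appropriately to $S$ via the identity on $C$ in the outer terms, and which is the split/trivial $i$-extension of $X_k$ in all other degrees $k$; the vertical differentials are those of $X$ in the outer columns and are forced by the universal properties in between. One then checks this is an exact sequence of complexes, hence an $i$-extension of $X$ by $D^{m+1}(C)$, and that the construction respects the relation $\sim$ and the Baer sum — the latter because direct sums, pullbacks along $\Delta$, and pushouts along $\nabla$ are all computed degreewise in $\Complexes$, so they commute with this degreewise construction. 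This gives a group homomorphism in one direction; the inverse is extracted by applying the $m$-component functor $(-)_m$ together with Proposition (1), sending an $i$-extension of $X$ by $D^{m+1}(C)$ back to an $i$-extension of $X_m$ by $C$ via restriction to degree $m$ (after checking the degree-$(m+1)$ and higher data is determined by degree $m$ up to equivalence, which is where the disk structure is essential).

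Next I would verify that the bijection carries ${\rm Hom}_{\mathcal C}(\mathcal F,-)$-exact extensions to ${\rm Hom}_{\Complexes}({\rm dw}\widetilde{\mathcal F},-)$-exact extensions and vice versa. For the forward direction: let $\mathbf{F} \in {\rm dw}\widetilde{\mathcal F}$, so each $F_k \in \mathcal F$; applying ${\rm Hom}_{\Complexes}(\mathbf F, -)$ to the constructed $i$-extension of complexes, one computes this Hom complex degreewise and uses that $\mathbf F$ is a complex to identify the resulting sequence of abelian groups with something assembled from ${\rm Hom}_{\mathcal C}(F_k, S)$-type data; the hypothesis that $S$ is ${\rm Hom}_{\mathcal C}(\mathcal F,-)$-exact (for all members of $\mathcal F$, in particular all the $F_k$) then forces exactness. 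The converse uses that for a single $F \in \mathcal F$, the sphere $S^m(F)$ — or rather an appropriate disk complex $D^{m+1}(F)$ — lies in ${\rm dw}\widetilde{\mathcal F}$, so ${\rm Hom}_{\Complexes}(D^{m+1}(F), -)$-exactness of the complex-level extension specializes, via Proposition (1), to ${\rm Hom}_{\mathcal C}(F, -)$-exactness of $S$ in degree $m$. Finally I would remark on naturality in $C$ and $X$, which is routine: all the constructions (degreewise gluing, application of $(-)_m$, the Proposition-(1) adjunction) are natural, so the composite is a natural transformation, hence the isomorphism is natural.

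The main obstacle I anticipate is not any single deep point but the bookkeeping in the splicing construction: making precise which morphisms are ``forced by universal properties'' in the intermediate degrees of the $i$-extension of complexes, and checking that two equivalent $i$-extensions of $X_m$ by $C$ produce equivalent $i$-extensions of $X$ by $D^{m+1}(C)$ — i.e. that the ladder of maps witnessing $S \sim \hat S$ lifts to a ladder of chain maps. For $i = 1$ this is immediate (as noted in the Remark after the definition of $\sim$, the middle map is an isomorphism), and this is essentially Gillespie's original argument; for general $i$ one must be a little careful that the equivalence relation is the one \emph{generated} by $\sim$, so it suffices to check compatibility with a single elementary relation $S \sim \hat S$, which keeps the diagram chase finite. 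I would organize the write-up so that the $i=1$ case is done explicitly and the general case is reduced to it by the standard observation that an $i$-extension is a $1$-extension of $C$ by a syzygy object splice together with an $(i-1)$-extension, all of which is transported degreewise.
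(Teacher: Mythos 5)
Your overall strategy coincides with the paper's: use Gillespie's explicit bijection (restriction to degree $m$ in one direction, a pullback construction in the other) and then transfer the relative exactness conditions via disk complexes built from objects of $\mathcal{F}$. But there are two genuine problems. First, your reduction of the four statements to \textbf{(1)} is miscounted: passing to $\mathcal{C}^{\rm op}$ turns extensions left-relative to $\mathcal{F}$ into extensions right-relative to $\mathcal{F}$ and swaps sub-object with quotient, so the formal dual of \textbf{(1)} is \textbf{(4)}, not \textbf{(3)}; likewise \textbf{(2)} is dual to \textbf{(3)}. Since \textbf{(1)} and \textbf{(2)} concern the same underlying bijection $\mathcal{E}xt^i_{\mathcal{C}}(X_m,C) \cong \mathcal{E}xt^i_{\Complexes}(X, D^{m+1}(C))$ but restrict it to different subgroups, \textbf{(2)} does not follow from \textbf{(1)} by any formal duality: one must separately verify that the complex-level extension is ${\rm Hom}(-,{\rm dw}\widetilde{\mathcal{G}})$-exact, i.e. that every chain map $D^{m+1}(C)\to G$ with $G\in{\rm dw}\widetilde{\mathcal{G}}$ extends along $D^{m+1}(C)\to\widetilde{Z}$. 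The paper supplies exactly this extra argument in its part \textbf{(2)}; your plan omits it.

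Second, your description of the middle complex is off, and the imprecision hides the step where the real work lies. The degree-$(m+1)$ row of the extension of complexes cannot be the disk $0\to C\to C\to 0\to\cdots$; it must be an $i$-extension of $X_{m+1}$ by $C$, namely the pullback of $S$ along $\partial^X_{m+1}\colon X_{m+1}\to X_m$ (with split rows only in degrees $k\neq m,m+1$). Relatedly, ${\rm Hom}_{\Complexes}(\mathbf{F},-)$ is not computed degreewise --- a chain map is a family of maps compatible with the differentials --- so exactness of ${\rm Hom}_{\Complexes}(\mathbf{F}, \widetilde{Z})\to{\rm Hom}_{\Complexes}(\mathbf{F},X)$ does not follow from assembling the groups ${\rm Hom}_{\mathcal{C}}(F_k, S)$. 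One has to lift a chain map $f\colon \mathbf{F}\to X$ to $\widetilde{Z}$ by hand: set $g_k=f_k$ away from degrees $m$ and $m+1$, use ${\rm Hom}_{\mathcal{C}}(\mathcal{F},-)$-exactness of $S$ to produce $g_m$, and then invoke the universal property of the pullback to produce $g_{m+1}$ compatibly with $\partial^{\mathbf{F}}$; this is precisely the paper's argument. Finally, a small indexing point: in the converse direction the correct test object is $D^m(F)$ (so that ${\rm Hom}_{\Complexes}(D^m(F),Y)\cong{\rm Hom}_{\mathcal{C}}(F,Y_m)$ picks out the degree-$m$ row of the extension), not $D^{m+1}(F)$.
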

\begin{proof}
We only give a proof for {\bf (1)} and {\bf (2)}, since {\bf (3)} and {\bf (4)} are dual. In order to present a shorter proof easy to understand, we only focus on the case $i = 1$, but the arguments given below also work for $i > 1$. We consider the maps constructed in \cite[Lemma 3.1]{Gil}.
\begin{itemize}[noitemsep, topsep=-10pt]
\item[{\bf (1)}] Let $[S] = [0 \longrightarrow D^{m+1}(C) \longrightarrow Z \longrightarrow X \longrightarrow 0] \in \mathcal{E}xt^1_{\Complexes}({\rm dw}\widetilde{\mathcal{F}}; X, D^{m+1}(C))$. Since the sequence $0 \longrightarrow D^{m+1}(C) \longrightarrow Z \longrightarrow X \longrightarrow 0$ is exact in $\Complexes$, we have $0 \longrightarrow C \longrightarrow Z_m \longrightarrow X_m \longrightarrow 0$ is exact in $\mathcal{C}$. We show it is also ${\rm Hom}_{\mathcal{C}}(\mathcal{F}, -)$-exact. For if $F \in \mathcal{F}$, then $D^m(F) \in {\rm dw}\widetilde{\mathcal{F}}$. We have the following commutative diagram: 
\[ \begin{tikzpicture}
\matrix (m) [matrix of math nodes, row sep=2em, column sep=2em, text height=1.5ex, text depth=0.25ex]
{ 0 & {\rm Hom}_{\mathcal{C}}(F, C) & {\rm Hom}_{\mathcal{C}}(F, Z_m) & {\rm Hom}_{\mathcal{C}}(F, X_m) & 0 \\ 0 & {\rm Hom}_{\Complexes}(D^m(F), D^{m+1}(C)) & {\rm Hom}_{\Complexes}(D^m(F), Z) & {\rm Hom}_{\Complexes}(D^m(F), X) & 0 \\ };
\path[->]
(m-1-1) edge (m-1-2) (m-1-2) edge (m-1-3) edge node[right] {$\cong$} (m-2-2) (m-1-3) edge (m-1-4) edge node[right] {$\cong$} (m-2-3) (m-1-4) edge (m-1-5) edge node[right] {$\cong$} (m-2-4)
(m-2-1) edge (m-2-2) (m-2-2) edge (m-2-3) (m-2-3) edge (m-2-4) (m-2-4) edge (m-2-5);
\end{tikzpicture} \]
Since the bottom row is exact and the vertical arrows are isomorphisms, we have that the top row is also exact. So $[0 \longrightarrow C \longrightarrow Z_m \longrightarrow X_m \longrightarrow 0] \in \mathcal{E}xt^1_{\mathcal{C}}(\mathcal{F}; X_m, C)$. So define a map $\Phi : \mathcal{E}xt^1_{\Complexes}({\rm dw}\widetilde{\mathcal{F}}; X, D^{m + 1}(C)) \longrightarrow \mathcal{E}xt^1_{\mathcal{C}}(\mathcal{F}; X_m, C)$ by setting $\Phi([S]) = \left[ 0 \longrightarrow C \longrightarrow Z_m \longrightarrow X_m \longrightarrow 0 \right]$. It is not hard to verify that $\Phi$ is a well defined group homomorphism.

Now we construct an inverse $\Psi : \mathcal{E}xt^1_{\mathcal{C}}(\mathcal{F}; X_m, C) \longrightarrow \mathcal{E}xt^1_{\Complexes}({\rm dw}\widetilde{F}; X, D^{m+1}(C))$ for $\Phi$. Consider a class $[S] = [0 \longrightarrow C \stackrel{\alpha}\longrightarrow Z \stackrel{\beta}\longrightarrow X_m \longrightarrow 0] \in \mathcal{E}xt^1_{\mathcal{C}}(\mathcal{F}; X_m, C)$. Taking the pullback of $\beta$ and $\partial^X_{m+1}$, we get the following commutative diagram with exact rows: 
\[ \begin{tikzpicture}
\matrix (m) [matrix of math nodes, row sep=2.5em, column sep=3em, text height=1.5ex, text depth=0.25ex]
{ 0 & C & Z \times_{X_m} X_{m+1} & X_{m+1} & 0 \\ 0 & C & Z & X_m & 0 \\ };
\path[->]
(m-1-1) edge (m-1-2) (m-1-2) edge node[above] {$\widetilde{\alpha}_{m+1}$} (m-1-3) edge node[right] {$=$} (m-2-2) (m-1-3) edge node[above] {$\widetilde{\beta}_{m+1}$} (m-1-4) edge node[right] {$\partial^{\widetilde{Z}}_{m+1}$} (m-2-3) (m-1-4) edge (m-1-5) edge node[right] {$\partial^X_{m + 1}$} (m-2-4)
(m-2-1) edge (m-2-2) (m-2-2) edge node[below] {$\alpha$} (m-2-3) (m-2-3) edge node[below] {$\beta$} (m-2-4) (m-2-4) edge (m-2-5);
\end{tikzpicture} \]
Let $\widetilde{Z}$ be the complex $\cdots \longrightarrow X_{m+2} \stackrel{\partial^X_{m+2}}\longrightarrow Z \times_{X_m} X_{m+1} \stackrel{\partial^{\widetilde{Z}}_{m+1}}\longrightarrow Z \longrightarrow X_{m-1} \longrightarrow \cdots$, where $\partial^{\widetilde{Z}}_m := \partial^X_{m} \circ \beta$, $\partial^{\widetilde{Z}}_{m + 2}$ is the map induced by the universal property of pullbacks satisfying $\widetilde{\beta}_{m+1} \circ \partial^{\widetilde{Z}}_{m+2} = \partial^{X}_{m+2}$, and $\partial^{\widetilde{Z}}_k = \partial^X_k$ for every $k \neq m, m+1, m+2$. From this we get an exact sequence of chain complexes $0 \longrightarrow D^{m+1}(C) \stackrel{\tilde{\alpha}}\longrightarrow \tilde{Z} \stackrel{\tilde{\beta}}\longrightarrow X \longrightarrow 0$, where $\tilde{\alpha}$ and $\tilde{\beta}$ are the chain maps given by: \[ \widetilde{\alpha}_k = \left\{ \begin{array}{ll} \alpha & \mbox{if $k = m$}, \\ \widetilde{\alpha}_{m+1} & \mbox{if $k = m+1$}, \\ 0 & \mbox{otherwise}. \end{array} \right. \mbox{ \ and \ } \widetilde{\beta}_k = \left\{ \begin{array}{ll} \beta & \mbox{if $k = m$}, \\ \widetilde{\beta}_{m+1} & \mbox{if $k = m + 1$}, \\ {\rm id}_{X_k} & \mbox{otherwise}. \end{array} \right. \] 
We prove that the previous sequence is ${\rm Hom}_{\Complexes}({\rm dw}\widetilde{\mathcal{F}}, -)$-exact. Let $F \in {\rm dw}\widetilde{\mathcal{F}}$ and suppose we are given a map $f : F \longrightarrow X$. We want to find a chain map $g : F \longrightarrow \tilde{Z}$ such that $\tilde{\beta} \circ g = f$. We set $g_k = f_k$ if $k \geq m + 2$ or $k \leq m - 1$. Since the sequence $0 \longrightarrow C \longrightarrow Z \longrightarrow X_m \longrightarrow 0$ is ${\rm Hom}_{\mathcal{C}}(\mathcal{F}, -)$-exact, there exists $g_m : F_m \rightarrow Z$ such that $\beta_m \circ g_m = f_m$. We have $\partial^{\widetilde{Z}}_m \circ g_m = \delta_m \circ g_m = \partial^X_m \circ \beta_m \circ g_m = \partial^X_m \circ f_m = f_{m - 1} \circ \partial^{F}_m = g_{m - 1} \circ \partial^{F}_m$. Now by the universal property of pullbacks, there exists a homomorphism $g_{m + 1} : F_{m + 1} \longrightarrow Z \times_{X_m} X_{m+1}$ such that the following diagram commutes:
\[ \begin{tikzpicture}
\matrix (m) [matrix of math nodes, row sep=3em, column sep=3em, text height=1.5ex, text depth=0.25ex]
{ F_{m+1} \\ & Z \times_{X_m} X_{m+1} & X_{m+1} \\ & Z & X_m \\ };
\path[->]
(m-1-1) edge [bend right=30] node[below,sloped] {$g_m \circ \partial^{F}_{m + 1}$} (m-3-2) edge [bend left=30] node[above,sloped] {$f_{m + 1}$} (m-2-3) edge node[above,sloped] {$g_{m + 1}$} (m-2-2)
(m-2-2) edge node[above,sloped] {$\widetilde{\beta}_{m + 1}$} (m-2-3) edge node[below,sloped] {$\partial^{\widetilde{Z}}_{m + 1}$} (m-3-2)
(m-3-2) edge node[below] {$\beta$} (m-3-3)
(m-2-3) edge node[above,sloped] {$\partial^X_{m + 1}$} (m-3-3);
\end{tikzpicture} \]
In order to show that $g = (g_k)_{k \in \mathbb{Z}}$ is a chain map, it is only left to show the equality $g_{m + 1} \circ \partial^{F}_{m + 1} = \partial^{\widetilde{Z}}_{m + 2} \circ g_{m + 2} = \partial^{\widetilde{Z}}_{m + 2} \circ f_{m + 2}$, which follows by the universal property of the previous pullback square. 
\[ \begin{tikzpicture}
\matrix (m) [matrix of math nodes, row sep=3.5em, column sep=3.5em]
{ F_{m+2} \\ & Z \times_{X_m} X_{m + 1} & X_{m + 1} \\ & Z & X_m \\ };
\path[->]
(m-1-1) edge [bend left=30] node[above,sloped] {$f_{m + 1} \circ \partial^{F}_{m + 2}$} (m-2-3) edge [bend right=40] node[below,sloped] {$0$} (m-3-2) edge [bend left=10] node[above,sloped] {$g_{m + 1} \circ \partial^{F}_{m + 1}$} (m-2-2) edge [bend right=10] node[below,sloped] {$\partial^{\widetilde{Z}}_{m + 2} \circ f_{m + 2}$} (m-2-2)
(m-2-2) edge node[above] {$\widetilde{\beta}_{m + 1}$} (m-2-3) edge node[below,sloped] {$\partial^{\widetilde{Z}}_{m + 1}$} (m-3-2)
(m-2-3) edge node[above,sloped] {$\partial^X_{m+1}$} (m-3-3)
(m-3-2) edge node[below] {$\beta$} (m-3-3);
\end{tikzpicture} \]
Then, we define a map $\Psi : \mathcal{E}xt^1_{\mathcal{C}}(\mathcal{F}; X_m, C) \longrightarrow \mathcal{E}xt^1_{\Complexes}({\rm dw}\widetilde{\mathcal{F}}; X, D^{m+1}(C))$ by setting $\Psi([S]) = [ 0 \longrightarrow D^{m+1}(C) \longrightarrow \widetilde{Z} \longrightarrow X \longrightarrow 0 ]$. It is not hard to see that $\Psi$ is a well defined group homomorphism such that ${\rm id}_{\mathcal{E}xt^1_{\Complexes}({\rm dw}\widetilde{F}; X, D^{m+1}(C))} = \Psi \circ \Phi$ and ${\rm id}_{\mathcal{E}xt^1_{\mathcal{C}}(\mathcal{F}; X_m, C)} = \Phi \circ \Psi$. \\

\item[{\bf (2)}] We use the same construction given in {\bf (1)}. Given a class $[0 \longrightarrow D^{m+1}(C) \longrightarrow Z \longrightarrow X \longrightarrow 0]$ in $\mathcal{E}xt^1_{\Complexes}(X, D^{m+1}(C); {\rm dw}\widetilde{\mathcal{G}})$, one can show as in {\bf (1)} that the sequence $0 \longrightarrow C \longrightarrow Z_m \longrightarrow X_m \longrightarrow 0$ is ${\rm Hom}_{\mathcal{C}}(-,\mathcal{G})$-exact.  

Now if we are given an exact and ${\rm Hom}_{\mathcal{C}}(-,\mathcal{G})$ sequence $0 \longrightarrow C \stackrel{\alpha}\longrightarrow Z \stackrel{\beta}\longrightarrow X_m \longrightarrow 0$, we show that the short exact sequence of complexes obtained by taking the pullback of $\beta$ and $\partial^X_{m + 1}$ is ${\rm Hom}_{\Complexes}(-, {\rm dw}\widetilde{\mathcal{G}})$-exact. Let $G \in {\rm dw}\widetilde{\mathcal{G}}$ and a chain map $f : D^{m+1}(C) \longrightarrow G$. We construct a chain map $h : \widetilde{Z} \longrightarrow G$ such that $h \circ \alpha = f$. For every $k \neq m, m + 1$, we set $h_k = 0$. Since the sequence $0 \longrightarrow C \longrightarrow Z \longrightarrow X_m \longrightarrow 0$ is ${\rm Hom}_{\mathcal{C}}(-,\mathcal{G})$-exact, there exists a map $h'_{m + 1} : Z \longrightarrow G_{m + 1}$ such that $f_{m + 1} = h'_{m + 1} \circ \alpha$. Set $h_{m + 1} := h'_{m + 1} \circ \partial^{\widetilde{Z}}_{m + 1}$ and $h_m := \partial^{G}_{m + 1} \circ h'_{m + 1}$. We have: 
\begin{align*}
h_{m+1} \circ \widetilde{\alpha}_{m+1} & = h'_{m+1} \circ \partial^{\widetilde{Z}}_{m+1} \circ \widehat{\alpha} = h'_{m+1} \circ \alpha = f_{m+1}, \\
h_m \circ \alpha & = \partial^{G}_{m+1} \circ h'_{m+1} \circ \alpha = \partial^{G}_{m+1} \circ f_{m+1} = f_m, \\
h_{m+1} \circ \partial^{\widetilde{Z}}_{m+2} & = h'_{m+1} \circ \partial^{\widetilde{Z}}_{m+1} \circ \partial^{\widetilde{Z}}_{m+2} = 0 = \partial^{G}_{m+1} \circ h_{m+2}, \\
h_m \circ \partial^{\widetilde{Z}}_{m+1} & = \partial^{G}_{m+1} \circ h'_{m+1} \circ \partial^{\widetilde{Z}}_{m+1} = \partial^{G}_{m+1} \circ h_{m+1}, \\
h_{m-1} \circ \partial^{\widetilde{Z}}_m & = 0 = \partial^{G}_m \circ \partial^{G}_{m+1} \circ h'_{m+1} = \partial^{G}_{m} \circ h_m.
\end{align*}
Hence, $h = (h_k \mbox{ : } k \in \mathbb{Z})$ is a chain map satisfying $h \circ \widetilde{\alpha} = f$. 
\end{itemize}
\end{proof}

Note that the complex $D^{m}(F)$ considered in the first part of the previous proof is actually a complex in $\widetilde{\mathcal{F}}$. So we can restrict $\Phi$ on $\mathcal{E}xt^1_{\Complexes}(\widetilde{\mathcal{F}}; X,D^{m+1}(C))$ to get a map $\mathcal{E}xt^1_{\Complexes}(\widetilde{\mathcal{F}}; X, D^{m+1}(C)) \longrightarrow \mathcal{E}xt^1_{\mathcal{C}}(\mathcal{F}; X_m, C)$, which is invertible in the case where $\mathcal{F}$ is \underline{closed under extensions}, i.e. that if for every short exact sequence $0 \longrightarrow F' \longrightarrow F \longrightarrow F'' \longrightarrow 0$ with $F', F'' \in \mathcal{F}$ one has $F \in \mathcal{F}$.

Under this hypothesis, we have that $F_n \in \mathcal{F}$ for every $F \in \widetilde{\mathcal{F}}$ (it suffices to consider the sequence $0 \longrightarrow Z_m(F) \longrightarrow F_m \longrightarrow Z_{m-1}(F) \longrightarrow 0$ for each $m \in \mathbb{Z}$). \\

\begin{proposition} Let $\mathcal{C}$ be an Abelian category and $\mathcal{F}$ and $\mathcal{G}$ be classes of objects of $\mathcal{C}$ which are closed under extensions. If $C \in {\rm Ob}(\mathcal{C})$ and $X, Y \in \Complexes$, then we have natural isomorphisms: 
\begin{itemize}[noitemsep, topsep=-10pt]
\item[{\bf (1)}] $\mathcal{E}xt^i_{\mathcal{C}}(\mathcal{F}; X_m, C) \cong \mathcal{E}xt^i_{\Complexes}(\widetilde{\mathcal{F}}; X, D^{m+1}(C))$. 

\item[{\bf (2)}] $\mathcal{E}xt^i_{\mathcal{C}}(X_m, C; \mathcal{G}) \cong \mathcal{E}xt^i_{\Complexes}(X, D^{m+1}(C); \widetilde{\mathcal{G}})$.

\item[{\bf (3)}] $\mathcal{E}xt^i_{\mathcal{C}}(\mathcal{F}; C, Y_m) \cong \mathcal{E}xt^i_{\Complexes}(\widetilde{\mathcal{F}}; D^m(C), Y)$.

\item[{\bf (4)}] $\mathcal{E}xt^i_{\mathcal{C}}(C, Y_m; \mathcal{G}) \cong \mathcal{E}xt^i_{\Complexes}(D^m(C), Y; \widetilde{\mathcal{G}})$. 
\end{itemize}
\end{proposition}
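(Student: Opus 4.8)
The plan is to bootstrap from Proposition \ref{isosdw} and the remark following it; the only genuinely new input is the observation recorded just before the present statement, namely that closedness under extensions forces $\widetilde{\mathcal{F}} \subseteq {\rm dw}\widetilde{\mathcal{F}}$ (for $F \in \widetilde{\mathcal{F}}$ the sequence $0 \longrightarrow Z_m(F) \longrightarrow F_m \longrightarrow Z_{m-1}(F) \longrightarrow 0$ is exact with outer terms in $\mathcal{F}$, so $F_m \in \mathcal{F}$ for every $m$), and dually $\widetilde{\mathcal{G}} \subseteq {\rm dw}\widetilde{\mathcal{G}}$. I would prove only {\bf (1)}: {\bf (3)} and {\bf (4)} are its duals, and {\bf (2)} is the verbatim analogue with ${\rm Hom}_{\mathcal{C}}(-,\mathcal{G})$ in place of ${\rm Hom}_{\mathcal{C}}(\mathcal{F},-)$ (using the ${\rm Hom}_{\mathcal{C}}(-,\mathcal{G})$ half of Proposition \ref{isosdw} and $\widetilde{\mathcal{G}} \subseteq {\rm dw}\widetilde{\mathcal{G}}$). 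As in Proposition \ref{isosdw} it suffices to treat $i = 1$, the higher cases being identical.

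First I would recall that $[0 \longrightarrow D^{m+1}(C) \longrightarrow Z \longrightarrow X \longrightarrow 0] \mapsto [0 \longrightarrow C \longrightarrow Z_m \longrightarrow X_m \longrightarrow 0]$ is, by \cite[Lemma 3.1]{Gil} combined with the Baer description of Section 3, an isomorphism $\Phi : \mathcal{E}xt^1_{\Complexes}(X, D^{m+1}(C)) \longrightarrow \mathcal{E}xt^1_{\mathcal{C}}(X_m, C)$ on the \emph{full} extension groups, whose inverse $\Psi$ is the pullback construction written out in the proof of Proposition \ref{isosdw}. Since $\mathcal{E}xt^1_{\Complexes}(\widetilde{\mathcal{F}}; X, D^{m+1}(C))$ is a subgroup of $\mathcal{E}xt^1_{\Complexes}(X, D^{m+1}(C))$ (Proposition 3.7 applied in the Abelian category $\Complexes$), the restriction $\widetilde{\Phi}$ of $\Phi$ to it is automatically injective; what remains is to see that $\widetilde{\Phi}$ takes values in $\mathcal{E}xt^1_{\mathcal{C}}(\mathcal{F}; X_m, C)$ and that it is onto.

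For the first point I would invoke the remark after Proposition \ref{isosdw}: for $F \in \mathcal{F}$ one has $D^m(F) \in \widetilde{\mathcal{F}}$, so ${\rm Hom}_{\Complexes}(\widetilde{\mathcal{F}},-)$-exactness of $0 \longrightarrow D^{m+1}(C) \longrightarrow Z \longrightarrow X \longrightarrow 0$ gives exactness of ${\rm Hom}_{\Complexes}(D^m(F), -)$ of it, hence, through the commutative ladder with vertical isomorphisms appearing in that proof, exactness of ${\rm Hom}_{\mathcal{C}}(F, 0 \longrightarrow C \longrightarrow Z_m \longrightarrow X_m \longrightarrow 0)$; thus $\widetilde{\Phi}$ indeed lands in $\mathcal{E}xt^1_{\mathcal{C}}(\mathcal{F}; X_m, C)$. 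For surjectivity — the only step that really uses the hypothesis — I would start from a class in $\mathcal{E}xt^1_{\mathcal{C}}(\mathcal{F}; X_m, C)$ and apply $\Psi$: the proof of Proposition \ref{isosdw} already shows the resulting complex $\widetilde{Z}$ makes $0 \longrightarrow D^{m+1}(C) \longrightarrow \widetilde{Z} \longrightarrow X \longrightarrow 0$ into a ${\rm Hom}_{\Complexes}({\rm dw}\widetilde{\mathcal{F}},-)$-exact sequence, and since $\widetilde{\mathcal{F}} \subseteq {\rm dw}\widetilde{\mathcal{F}}$ this sequence is a fortiori ${\rm Hom}_{\Complexes}(\widetilde{\mathcal{F}},-)$-exact, i.e. it represents a class in $\mathcal{E}xt^1_{\Complexes}(\widetilde{\mathcal{F}}; X, D^{m+1}(C))$ whose image under $\widetilde{\Phi}$ is the original class because $\Phi \circ \Psi = {\rm id}$. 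Hence $\widetilde{\Phi}$ is a group isomorphism, natural in $X$ and $C$ because $\Phi$ is. (The same argument in fact shows $\mathcal{E}xt^1_{\Complexes}(\widetilde{\mathcal{F}}; X, D^{m+1}(C)) = \mathcal{E}xt^1_{\Complexes}({\rm dw}\widetilde{\mathcal{F}}; X, D^{m+1}(C))$ under this hypothesis.)

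The main obstacle is essentially absent once Proposition \ref{isosdw} is granted: the whole content reduces to the single inclusion $\widetilde{\mathcal{F}} \subseteq {\rm dw}\widetilde{\mathcal{F}}$, which is exactly where "closed under extensions" is consumed, and it is needed precisely to certify that the inverse map $\Psi$ built in Proposition \ref{isosdw} still lands among the $\widetilde{\mathcal{F}}$-relative extensions of $X$ by $D^{m+1}(C)$, rather than merely the degreewise-$\mathcal{F}$-relative ones. The remaining items {\bf (2)}, {\bf (3)}, {\bf (4)} are then obtained as indicated in the first paragraph.
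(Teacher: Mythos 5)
Your argument is correct and is essentially the paper's own: the paper states this proposition without a formal proof, relying exactly on the two observations you isolate — that $D^m(F)\in\widetilde{\mathcal{F}}$ makes the restriction of $\Phi$ land in $\mathcal{E}xt^1_{\mathcal{C}}(\mathcal{F};X_m,C)$, and that closedness under extensions gives $\widetilde{\mathcal{F}}\subseteq{\rm dw}\widetilde{\mathcal{F}}$ (via $0\to Z_m(F)\to F_m\to Z_{m-1}(F)\to 0$), so the inverse $\Psi$ from Proposition \ref{isosdw} still produces ${\rm Hom}_{\Complexes}(\widetilde{\mathcal{F}},-)$-exact sequences. Your surjectivity bookkeeping via $\Phi\circ\Psi={\rm id}$ and the noted equality $\mathcal{E}xt^1_{\Complexes}(\widetilde{\mathcal{F}};X,D^{m+1}(C))=\mathcal{E}xt^1_{\Complexes}({\rm dw}\widetilde{\mathcal{F}};X,D^{m+1}(C))$ match the remarks the paper records immediately before and after the statement.
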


It follows that when $\mathcal{F}$ and $\mathcal{G}$ are closed under extensions, we have:
\begin{itemize}[noitemsep, topsep=-10pt] 
\item[{\bf (1)}] $\mathcal{E}xt^i_{\Complexes}(\widetilde{\mathcal{F}}; X, D^{m + 1}(C)) \cong \mathcal{E}xt^i_{\Complexes}({\rm dw}\widetilde{\mathcal{F}}; X, D^{m + 1}(C))$,

\item[{\bf (2)}] $\mathcal{E}xt^i_{\Complexes}(X, D^{m + 1}(C); \widetilde{\mathcal{G}}) \cong \mathcal{E}xt^i_{\Complexes}(X, D^{m + 1}(C); {\rm dw}\widetilde{\mathcal{G}})$, 

\item[{\bf (3)}] $\mathcal{E}xt^i_{\Complexes}(\widetilde{\mathcal{F}}; D^m(C), Y) \cong \mathcal{E}xt^i_{\Complexes}({\rm dw}\widetilde{\mathcal{F}}; D^m(C), Y)$, and 

\item[{\bf (4)}] $\mathcal{E}xt^i_{\Complexes}(D^m(C), Y; \widetilde{\mathcal{G}}) \cong \mathcal{E}xt^i_{\Complexes}(D^m(C), Y; {\rm dw}\widetilde{\mathcal{G}})$. \\
\end{itemize}

This seems to be a weird behaviour at a first glance, but this is clarified in the following proposition. \\

\begin{proposition}\label{relativedwsd} Let $\mathcal{C}$ be an Abelian category and $\mathcal{F}$ and $\mathcal{G}$ be classes of objects of $\mathcal{C}$ which are closed under extensions. Suppose we are given short exact sequences of the form $S = 0 \longrightarrow D^{m + 1}(C) \longrightarrow Z \longrightarrow X \longrightarrow 0$ and $S' = 0 \longrightarrow Y \longrightarrow Z \longrightarrow D^m(C) \longrightarrow 0$, for some integer $m \in \mathbb{Z}$. Then: 
\begin{itemize}[noitemsep, topsep=-10pt]
\item[{\bf (1)}] $S$ is ${\rm Hom}({\rm dw}\widetilde{\mathcal{F}},-)$-exact if, and only if, it is ${\rm Hom}(\widetilde{\mathcal{F}},-)$-exact.

\item[{\bf (2)}] $S$ is ${\rm Hom}(-,{\rm dw}\widetilde{\mathcal{G}})$-exact if, and only if, it is ${\rm Hom}(-,\widetilde{\mathcal{G}})$-exact.

\item[{\bf (3)}] $S'$ is ${\rm Hom}({\rm dw}\widetilde{\mathcal{F}},-)$-exact if, and only if, it is ${\rm Hom}(\widetilde{\mathcal{F}},-)$-exact.

\item[{\bf (4)}] $S'$ is ${\rm Hom}(-,{\rm dw}\widetilde{\mathcal{G}})$-exact if, and only if, it is ${\rm Hom}(-,\widetilde{\mathcal{G}})$-exact.
\end{itemize}
\end{proposition}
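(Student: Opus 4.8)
My plan is to reduce everything to the degree-$m$ object via the isomorphisms already constructed in Proposition~\ref{isosdw}, handling the four items by one and the same argument; I will write out {\bf (1)} and indicate the (routine) changes for the rest. The \emph{only if} implications are immediate: since $\mathcal{F}$ is closed under extensions, applying this to $0 \longrightarrow Z_m(F) \longrightarrow F_m \longrightarrow Z_{m-1}(F) \longrightarrow 0$ shows every term of an $\mathcal{F}$-complex lies in $\mathcal{F}$, so $\widetilde{\mathcal{F}} \subseteq {\rm dw}\widetilde{\mathcal{F}}$ (and likewise $\widetilde{\mathcal{G}} \subseteq {\rm dw}\widetilde{\mathcal{G}}$); testing ${\rm Hom}$-exactness against a larger class is the stronger condition, so ${\rm Hom}({\rm dw}\widetilde{\mathcal{F}},-)$-exactness of $S$ forces ${\rm Hom}(\widetilde{\mathcal{F}},-)$-exactness, and similarly in {\bf (2)}--{\bf (4)}.

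For the \emph{if} direction of {\bf (1)}, I would first rephrase the hypothesis as $[S] \in \mathcal{E}xt^1_{\Complexes}(\widetilde{\mathcal{F}}; X, D^{m+1}(C))$: this changes nothing, since a $1$-extension is ${\rm Hom}(\mathcal{A},-)$-exact exactly when its Baer class lies in $\mathcal{E}xt^1_{\Complexes}(\mathcal{A}; X, D^{m+1}(C))$, the property being representative-independent because related $1$-extensions have isomorphic middle terms. Let $\Phi$ and $\Psi$ be the mutually inverse isomorphisms between $\mathcal{E}xt^1_{\Complexes}(X, D^{m+1}(C))$ and $\mathcal{E}xt^1_{\mathcal{C}}(X_m, C)$ used in the proof of Proposition~\ref{isosdw} (Gillespie's maps of \cite[Lemma 3.1]{Gil}), where $\Phi$ takes the degree-$m$ component and $\Psi$ is the pullback construction; that they are mutually inverse on the whole of $\mathcal{E}xt^1_{\Complexes}(X, D^{m+1}(C))$ is the non-relative content of \cite[Lemma 3.1]{Gil}, of which Proposition~\ref{isosdw} is the relative restriction. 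As in the proof of Proposition~\ref{isosdw}\,{\bf (1)}, testing $S$ against the disk complex $D^m(F)$ --- which lies in $\widetilde{\mathcal{F}}$, being exact with every cycle equal to $0$ or $F$, and not merely in ${\rm dw}\widetilde{\mathcal{F}}$ --- and using the adjointness isomorphisms recalled in the introduction, I would show that $\Phi([S]) = [0 \longrightarrow C \longrightarrow Z_m \longrightarrow X_m \longrightarrow 0]$ lies in $\mathcal{E}xt^1_{\mathcal{C}}(\mathcal{F}; X_m, C)$. Conversely, the proof of Proposition~\ref{isosdw}\,{\bf (1)} shows the complex $\widetilde{Z}$ produced by $\Psi$ always gives a ${\rm Hom}({\rm dw}\widetilde{\mathcal{F}},-)$-exact sequence, so $\Psi$ carries $\mathcal{E}xt^1_{\mathcal{C}}(\mathcal{F}; X_m, C)$ into $\mathcal{E}xt^1_{\Complexes}({\rm dw}\widetilde{\mathcal{F}}; X, D^{m+1}(C))$. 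Since $\Psi \circ \Phi = {\rm id}$, I conclude $[S] = \Psi(\Phi([S])) \in \mathcal{E}xt^1_{\Complexes}({\rm dw}\widetilde{\mathcal{F}}; X, D^{m+1}(C))$, i.e. $S$ is ${\rm Hom}({\rm dw}\widetilde{\mathcal{F}},-)$-exact.

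Item {\bf (2)} is the same with ``${\rm Hom}(\mathcal{F},-)$-exact'' replaced throughout by ``${\rm Hom}(-,\mathcal{G})$-exact'' and Proposition~\ref{isosdw}\,{\bf (2)} in place of {\bf (1)}, testing $S$ against a disk complex on an object of $\mathcal{G}$ (again a member of $\widetilde{\mathcal{G}}$) and using the contravariant adjointness. For {\bf (3)} and {\bf (4)} one works with $S' = 0 \longrightarrow Y \longrightarrow Z \longrightarrow D^m(C) \longrightarrow 0$ and the corresponding mutually inverse isomorphisms between $\mathcal{E}xt^1_{\Complexes}(D^m(C), Y)$ and $\mathcal{E}xt^1_{\mathcal{C}}(C, Y_m)$, tests $S'$ against the appropriate disk complex on $F$ (resp. $G$) via the same adjointness, and invokes Proposition~\ref{isosdw}\,{\bf (3)} (resp. {\bf (4)}); the argument is otherwise identical.

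Since the work is entirely bookkeeping with results already in hand, the one thing I would flag is the \emph{choice} of argument. A direct attempt --- given $G \in {\rm dw}\widetilde{\mathcal{F}}$ and a chain map $G \to X$, trying to lift it along $Z \to X$ one degree at a time --- does not work: objects of $\mathcal{F}$ need not be projective relative to the short exact sequences $0 \longrightarrow C \longrightarrow Z_k \longrightarrow X_k \longrightarrow 0$, so there is no reason the degreewise obstructions vanish. The statement is true only because $S$ is built from a disk complex, which lets Gillespie's isomorphisms $\Phi,\Psi$ move the entire problem onto the single object $Z_m$, where the hypothesis ``${\rm Hom}(\widetilde{\mathcal{F}},-)$-exact'' has already been distilled into ordinary ${\rm Hom}_{\mathcal{C}}(\mathcal{F},-)$-exactness of a short exact sequence in $\mathcal{C}$ --- after which $\Psi$ manufactures a degreewise-$\mathcal{F}$-relative extension automatically.
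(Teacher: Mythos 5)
Your proof is correct and follows essentially the same route as the paper: test $S$ against the disk complexes $D^m(F)\in\widetilde{\mathcal{F}}$ to see that the degree-$m$ short exact sequence is ${\rm Hom}_{\mathcal{C}}(\mathcal{F},-)$-exact, then use the pullback construction of Proposition~\ref{isosdw} (which always outputs a ${\rm Hom}({\rm dw}\widetilde{\mathcal{F}},-)$-exact sequence) together with the fact that this reconstruction is equivalent to $S$, so that exactness transfers back. The paper phrases this as an equivalence of sequences rather than as $[S]=\Psi(\Phi([S]))$, but the content is identical.
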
 
\begin{proof} We only prove {\bf (1)}. The implication ($\Longrightarrow$) is clear, since $\widetilde{\mathcal{F}} \subseteq {\rm dw}\widetilde{\mathcal{F}}$ if $\mathcal{F}$ is closed under extensions. Now suppose $S$ is ${\rm Hom}_{\Complexes}( \widetilde{\mathcal{F}},- )$-exact. Note that $S$ and $0 \longrightarrow D^{m+1}(C) \longrightarrow \widetilde{Z_m} \longrightarrow X \longrightarrow 0$ are equivalent, so the result will follow if we show that the latter sequence is ${\rm Hom}_{\Complexes}({{\rm dw}\widetilde{\mathcal{F}}},-)$-exact. Since $0 \longrightarrow D^{m + 1}(C) \longrightarrow \widetilde{Z_m} \longrightarrow X \longrightarrow 0$ is ${\rm Hom}_{\Complexes}(\widetilde{\mathcal{F}},- )$-exact, we know $0 \longrightarrow C \longrightarrow Z_m \longrightarrow X_m \longrightarrow 0$ is ${\rm Hom}_{\mathcal{C}}(\mathcal{F},- )$-exact. Then as we did above, we can show that $0 \longrightarrow D^{m + 1}(C) \longrightarrow \widetilde{Z_m} \longrightarrow X \longrightarrow 0$ is ${\rm Hom}_{\Complexes}( {\rm dw}\widetilde{\mathcal{F}},- )$-exact.  
\end{proof}


\section{Relative extensions and sphere complexes}

In this section we study the connection between relative extensions and sphere chain complexes. In \cite[Lemma 4.2]{Gillespie}, J. Gillespie constructed two natural monomorphisms ${\rm Ext}^1_{\mathcal{C}}(C, Z_m(X)) \hookrightarrow {\rm Ext}^1_{\Complexes}(S^m(C), X)$ and ${\rm Ext}^1_{\mathcal{C}}(\frac{X_m}{B_m(X)}, C) \hookrightarrow {\rm Ext}^1_{\Complexes}(X,S^m(C))$, which are isomorphisms whether $X$ is exact. We shall prove similar results for relative extensions with respect to the classes $\widetilde{\mathcal{F}}$ and ${\rm dw}\widetilde{\mathcal{F}}$, for a given class $\mathcal{F}$ of objects of $\mathcal{C}$. \\

\begin{proposition} Let $\mathcal{C}$ be an Abelian category, and $\mathcal{F}$ and $\mathcal{G}$ be two classes of objects of $\mathcal{C}$ which are closed under extensions. Let $C \in {\rm Ob}(\mathcal{C})$ and $X, Y \in {\rm Ob}(\Complexes)$. There exist natural monomorphisms: 
\begin{itemize}[noitemsep, topsep=-10pt]
\item[{\bf (1)}] $\mathcal{E}xt^i_{\mathcal{C}}(\mathcal{F}; \frac{X_m}{B_m(X)}, C) \hookrightarrow \mathcal{E}xt^i_{\Complexes}(\widetilde{\mathcal{F}}; X, S^{m}(C))$. 

\item[{\bf (2)}] $\mathcal{E}xt^i_{\mathcal{C}}(\frac{X_m}{B_m(X)}, C; \mathcal{G}) \hookrightarrow \mathcal{E}xt^i_{\Complexes}(X, S^{m}(C); \widetilde{\mathcal{G}})$.

\item[{\bf (3)}] $\mathcal{E}xt^i_{\mathcal{C}}(\mathcal{F}; C, Z_m(Y)) \hookrightarrow \mathcal{E}xt^i_{\Complexes}(\widetilde{\mathcal{F}}; S^m(C), Y)$.

\item[{\bf (4)}] $\mathcal{E}xt^i_{\mathcal{C}}(C, Z_m(Y); \mathcal{G}) \hookrightarrow \mathcal{E}xt^i_{\Complexes}(S^m(C), Y; \widetilde{\mathcal{G}})$.
\end{itemize}
Moreover, if $X$ and $Y$ are exact and ${\rm Hom}_{\mathcal{C}}(\mathcal{F}, -)$-exact chain complexes, then {\bf (1)} and {\bf (3)} are invertible. Dually, the same is true for {\bf (2)} and {\bf (4)} if $X$ and $Y$ are exact and ${\rm Hom}_{\mathcal{C}}(-,\mathcal{G})$-exact. 
\end{proposition}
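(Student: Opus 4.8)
The plan is to imitate, in the relative setting, the pullback and pushout constructions of \cite[Lemma 4.2]{Gillespie}, and to control relative exactness by systematically testing short exact sequences of complexes against the $\widetilde{\mathcal{F}}$-complexes $D^m(F')$ with $F'\in\mathcal{F}$. As in Proposition~\ref{isosdw} I would treat only {\bf (1)} and {\bf (3)} ({\bf (2)} and {\bf (4)} being dual), and only the case $i=1$, the case $i>1$ following the same pattern.

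\emph{Construction of the maps.} For {\bf (1)}, given a ${\rm Hom}_{\mathcal{C}}(\mathcal{F},-)$-exact extension $S = 0 \to C \stackrel{\alpha}\to Z \stackrel{\beta}\to X_m/B_m(X) \to 0$, I would pull $\beta$ back along the canonical epimorphism $\pi_m\colon X_m \twoheadrightarrow X_m/B_m(X)$, obtaining $0 \to C \to Z' \stackrel{\beta'}\to X_m \to 0$, and assemble the complex $W$ with $W_m = Z'$, $W_k = X_k$ for $k\neq m$, with $\partial^W_m = \partial^X_m\circ\beta'$ and $\partial^W_{m+1}\colon X_{m+1}\to Z'$ the unique arrow with $\beta'\circ\partial^W_{m+1} = \partial^X_{m+1}$ whose $Z$-component vanishes (legitimate since $\pi_m\circ\partial^X_{m+1} = 0$), the other differentials being those of $X$. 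Then $0 \to S^m(C) \to W \to X \to 0$ is short exact in $\Complexes$, and $\Phi$ sends $[S]$ to its class. For {\bf (3)}, dually, I would push $0\to Z_m(Y)\stackrel{\alpha}\to W\stackrel{\beta}\to C\to 0$ out along $Z_m(Y)\hookrightarrow Y_m$ and build the complex $V$ with $V_m$ the pushout and $V_k=Y_k$ otherwise, obtaining $0\to Y\to V\to S^m(C)\to 0$. Well-definedness and additivity of $\Phi$ are checked exactly as in the non-relative case of \cite[Lemma 4.2]{Gillespie}.

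\emph{The image lies in the relative subgroup, and $\Phi$ is injective.} Fix $F\in\widetilde{\mathcal{F}}$; since $F$ is exact, $F_m/B_m(F) = F_m/Z_m(F) \cong Z_{m-1}(F)\in\mathcal{F}$. To lift a chain map $f\colon F\to X$ along $W\to X$ I only need, in degree $m$, a map $v\colon F_m\to Z$ with $\beta\circ v = \pi_m\circ f_m$ and $v\circ\partial^F_{m+1} = 0$ (the second condition is what the vanishing $Z$-component of $\partial^W_{m+1}$ forces on a chain lift). Since $f$ is a chain map, $\pi_m\circ f_m$ kills $B_m(F)$, hence factors through $F_m/B_m(F)\cong Z_{m-1}(F)\in\mathcal{F}$, and ${\rm Hom}_{\mathcal{C}}(\mathcal{F},-)$-exactness of $S$ lets me lift the factored map through $\beta$; precomposing with $F_m\to F_m/B_m(F)$ gives the desired $v$, and the remaining degrees of the lift are determined. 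Thus $\Phi([S])\in \mathcal{E}xt^1_{\Complexes}(\widetilde{\mathcal{F}};X,S^m(C))$; case {\bf (3)} is symmetric (a chain map $F\to S^m(C)$ is a map $F_m/B_m(F)\to C$, which lifts through $\beta\colon W\to C$). Injectivity I would obtain by applying $Q_m$ (resp. $Z_m$): a direct computation with the pullback $Z'$ (resp. the pushout) gives $Q_m(W)\cong Z$ (resp. $Z_m(V)\cong W$) compatibly with $S$, so $Q_m$ (resp. $Z_m$) retracts $\Phi$ onto $\mathcal{E}xt^1_{\mathcal{C}}(\mathcal{F};X_m/B_m(X),C)$ (resp. $\mathcal{E}xt^1_{\mathcal{C}}(\mathcal{F};C,Z_m(Y))$).

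\emph{Invertibility when $X$, $Y$ are exact and ${\rm Hom}_{\mathcal{C}}(\mathcal{F},-)$-exact.} Here I would show the retraction above is a two-sided inverse. For {\bf (1)}: $Q_m$ is right exact with first left derived functor $Z_{m+1}(-)$ in the relevant degree, so exactness of $X$ forces the connecting map $Z_{m+1}(X)=B_{m+1}(X)\to C$ to vanish, and $Q_m$ turns $0\to S^m(C)\to W\to X\to 0$ into a genuine short exact sequence $0\to C\to Q_m(W)\to X_m/B_m(X)\to 0$. To see this sequence is ${\rm Hom}_{\mathcal{C}}(\mathcal{F},-)$-exact, fix $F'\in\mathcal{F}$ and $a\colon F'\to X_m/B_m(X)$: since $X$ is exact and ${\rm Hom}_{\mathcal{C}}(\mathcal{F},-)$-exact, the epimorphism $X_m\twoheadrightarrow X_m/B_m(X)\cong B_{m-1}(X)$ is surjective on ${\rm Hom}_{\mathcal{C}}(F',-)$, so $a$ lifts to $c\colon F'\to X_m$, which corresponds via \cite[Lemma 3.1]{Gil} to a chain map $D^m(F')\to X$ with $D^m(F')\in\widetilde{\mathcal{F}}$; lifting this chain map through $W\to X$ and applying $Q_m$ yields a lift of $a$ to $Q_m(W)$. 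For {\bf (3)} the argument is dual: testing $0\to Y\to V\to S^m(C)\to 0$ against $D^m(F')$ (via \cite[Lemma 3.1]{Gil}) shows $0\to Y_m\to V_m\to C\to 0$ is ${\rm Hom}_{\mathcal{C}}(\mathcal{F},-)$-exact, and exactness of $Y$ lets one correct a lift $F'\to V_m$ of $a\colon F'\to C$ so that it lands in $Z_m(V)$, using that $Y_m\twoheadrightarrow B_{m-1}(Y)$ is ${\rm Hom}_{\mathcal{C}}(F',-)$-surjective (again by exactness and ${\rm Hom}_{\mathcal{C}}(\mathcal{F},-)$-exactness of $Y$). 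That $\Phi$ and this assignment are mutually inverse is then the same diagram chase as in \cite[Lemma 4.2]{Gillespie}, exactness of $X$ (resp. $Y$) being what makes the pullback (resp. pushout) recover the complex one started with. The main obstacle is exactly this last step: the complex-level relative exactness does not by itself survive passage through $Q_m$ or $Z_m$, and one must feed in the exactness and ${\rm Hom}_{\mathcal{C}}(\mathcal{F},-)$-exactness of $X$ and $Y$ through the auxiliary disk complexes $D^m(F')$ to recover it; the dual statements {\bf (2)} and {\bf (4)} are obtained by replacing $\mathcal{F}$ with $\mathcal{G}$ and reversing all arrows.
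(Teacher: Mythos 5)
Your proposal is correct and follows essentially the same route as the paper: the forward maps are the same pullback/pushout constructions from Gillespie's Lemma 4.2, relative exactness of their images is verified via the observation that $F_m/B_m(F)\cong Z_{m-1}(F)\in\mathcal{F}$ for $F\in\widetilde{\mathcal{F}}$, and invertibility in the exact, ${\rm Hom}_{\mathcal{C}}(\mathcal{F},-)$-exact case is obtained by passing to $Q_m$ (resp.\ $Z_m$) and re-importing relative exactness through the disk complexes $D^m(F')$, exactly as in the paper's proof. The only cosmetic differences are that you justify injectivity by exhibiting $Q_m$/$Z_m$ as retractions rather than citing the monicity of Gillespie's map, and you argue the exactness of the quotient sequence directly where the paper cites \cite[Lemma 3.2]{Perez}.
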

\begin{proof} We only prove the case $i = 1$ for the statements {\bf (1)} and {\bf (2)}.
\begin{itemize}[noitemsep, topsep=-10pt]
\item[{\bf (1)}] We consider the dual of the isomorphism given by J. Gillespie in \cite[Lemma 4.2]{Gillespie}. Suppopse we have an exact and ${\rm Hom}_{\mathcal{C}}(\mathcal{F},-)$-exact sequence $0 \longrightarrow C \stackrel{\alpha}\longrightarrow Z \stackrel{\beta}\longrightarrow \frac{X_m}{B_m(X)} \longrightarrow 0$. By taking the pullback of $\beta$ and $\pi^X_m : X_m \longrightarrow \frac{X_m}{B_m(X)}$, we construct a short exact sequence $0 \longrightarrow S^m(C) \stackrel{\widetilde{\alpha}}\longrightarrow \widetilde{Z} \stackrel{\widetilde{\beta}}\longrightarrow X \longrightarrow 0$ of chain complexes, where $\widetilde{\alpha}_k = 0$ and $\widetilde{\beta}_k = {\rm id}_{X_k}$ for every $k \neq m$, and $\widetilde{\alpha}_m$ and $\widetilde{\beta}_m$ are the morphisms appearing in the pullback diagram 
\[ \begin{tikzpicture}
\matrix (m) [matrix of math nodes, row sep=2em, column sep=3em, text height=1.5ex, text depth=0.25ex]
{ 0 & C & \widetilde{Z}_m & X_m & 0 \\ 0 & C & Z & \frac{X_m}{B_m(X)} & 0 \\ };
\path[->]
(m-1-1) edge (m-1-2) (m-1-2) edge node[above] {$\widetilde{\alpha}_m$} (m-1-3) (m-1-3) edge node[above] {$\widetilde{\beta}_m$} (m-1-4) edge node[right] {$\rho_Z$} (m-2-3) (m-1-4) edge (m-1-5) edge node[right] {$\pi^X_m$} (m-2-4)
(m-2-1) edge (m-2-2) (m-2-2) edge node[below] {$\alpha$} (m-2-3) (m-2-3) edge node[below] {$\beta$} (m-2-4) (m-2-4) edge (m-2-5);
\path[-,font=\scriptsize]
(m-1-2) edge [double, thick, double distance=4pt] (m-2-2);
\end{tikzpicture} \]
The arrow $\partial^{\widetilde{Z}}_{m + 1}$ is the map induced by the universal property of pullbacks such that $\widetilde{\beta}_m \circ \partial^{\widetilde{Z}}_{m + 1} = \partial^X_{m + 1}$ and $\rho_Z \circ \partial^{\widetilde{Z}}_{m + 1} = 0$, and $\partial^{\widetilde{Z}}_m := \partial^X_m \circ \widetilde{\beta}$. We show the sequence $0 \longrightarrow S^m(C) \stackrel{\widetilde{\alpha}}\longrightarrow \widetilde{Z} \stackrel{\widetilde{\beta}}\longrightarrow X \longrightarrow 0$ is also ${\rm Hom}_{\Complexes}( \widetilde{\mathcal{F}}, - )$-exact. Let $F \in \widetilde{\mathcal{F}}$ and consider a chain map $f : F \longrightarrow X$. We construct a chain map $h : F \longrightarrow \widetilde{Z}$ such that $\widetilde{\beta} \circ h = f$. Note that $\pi^X_m \circ f_m \circ \partial^{F}_{m + 1} = 0$. Factoring $\partial^{F}_{m + 1}$ as $i_{B_m(F)} \circ \widehat{\partial^{F}_{m + 1}}$, where $i_{B_m(F)} : B_m(F) \longrightarrow F_m$ is the inclusion and $\widehat{\partial^{F}_{m + 1}}$ is epic, we have that $\pi^X_m \circ f_m \circ i_{B_m(F)} = 0$. By the universal property of cokernels, there is a unique map $\overline{f_m} : \frac{F_m}{B_m(F)} \longrightarrow \frac{X_m}{B_m(X)}$ such that 
\[ \begin{tikzpicture}
\matrix (m) [matrix of math nodes, row sep=2em, column sep=3em, text height=1.5ex, text depth=0.25ex]
{ 0 & B_m(F) & F_m & \frac{F_m}{B_m(F)} & 0 \\ & & & \frac{X_m}{B_m(X)} \\ };
\path[->]
(m-1-3) edge node[below,sloped] {$\pi^X_m \circ f_m$} (m-2-4)
(m-1-2) edge node[above] {$i_{B_m(F)}$} (m-1-3)
(m-1-3) edge node[above] {$\pi^X_m$} (m-1-4)
(m-1-1) edge (m-1-2)
(m-1-4) edge (m-1-5);
\path[dotted,->]
(m-1-4) edge node[right] {$\exists \mbox{! } \overline{f_m}$} (m-2-4);
\end{tikzpicture} \] 
commutes. On the other hand, we have $\frac{F_m}{B_m(F)} \cong Z_{m - 1}(F) \in \mathcal{F}$. Since $0 \longrightarrow C \stackrel{\alpha}\longrightarrow Z \stackrel{\beta}\longrightarrow \frac{X_m}{B_m(X)} \longrightarrow 0$ is a ${\rm Hom}_{\mathcal{C}}(\mathcal{F}, -)$-exact sequence, there exists a morphism $h'_m : \frac{F_m}{B_m(F)} \longrightarrow Z$ such that \\
\[ \begin{tikzpicture}
\matrix (m) [matrix of math nodes, row sep=2em, column sep=3em, text height=1.5ex, text depth=0.25ex]
{ & & & \frac{F_m}{B_m(F)} \\ 0 & C & Z & \frac{X_m}{B_m(X)} & 0 \\ };
\path[->]
(m-2-1) edge (m-2-2) (m-2-4) edge (m-2-5)
(m-1-4) edge node[right] {$\overline{f_m}$} (m-2-4)
(m-2-2) edge node[below] {$\alpha$} (m-2-3)
(m-2-3) edge node[below] {$\beta$} (m-2-4);
\path[dotted,->]
(m-1-4) edge node[above,sloped] {$\exists \mbox{ } h'_m$} (m-2-3);
\end{tikzpicture} \]
commutes. Since $\mathcal{F}$ is closed under extensions and $F$ is exact, we have $F_m \in \mathcal{F}$. Using the universal property of pullbacks, we get the following commutative diagram 
\[ \begin{tikzpicture}
\matrix (m) [matrix of math nodes, row sep=2em, column sep=2em, text height=1.5ex, text depth=0.25ex]
{ F_m \\ & Z \times_{\frac{X_m}{B_m(X)}} X_m & X_m \\ & Z & \frac{X_m}{B_m(X)} \\ };
\path[->]
(m-1-1) edge [bend left=30] node[above,sloped] {$f_m$} (m-2-3) edge [bend right=30] node[below,sloped] {$h'_m \circ \pi^{F}_m$} (m-3-2)
(m-2-2) edge node[above,sloped] {$\widetilde{\beta}_m$} (m-2-3) edge node[left] {$\rho_Z$} (m-3-2)
(m-2-3) edge node[right] {$\pi^X_m$} (m-3-3)
(m-3-2) edge node[below] {$\beta$} (m-3-3);
\path[dotted,->]
(m-1-1) edge node[above,sloped] {$\exists \mbox{! } h_m$} (m-2-2);
\end{tikzpicture} \]
Set $h_k = f_k$ for every $k \neq m$. We have $\widetilde{\beta}_k \circ h_k = f_k$ for every $k \in \mathbb{Z}$. We check $h = (h_k \mbox{ : } k \in \mathbb{Z})$ is a chain map. The equality $h_m \circ \partial^{F}_{m + 1} = \partial^{\widetilde{Z}}_{m + 1} \circ f_{m + 1}$ follows by the commutativity of the following diagram: 
\[ \begin{tikzpicture}
\matrix (m) [matrix of math nodes, row sep=2.5em, column sep=4em, text height=1.5ex, text depth=0.25ex]
{ F_{m + 1} \\ & Z \times_{\frac{X_m}{B_m(X)}} X_m & X_m \\ & Z & \frac{X_m}{B_m(X)} \\ };
\path[->]
(m-1-1) edge [bend left=10] node[above,sloped] {$h_m \circ \partial^{F}_{m + 1}$} (m-2-2) edge [bend right=10] node[below,sloped] {$\partial^{\widetilde{Z}}_{m + 1} \circ f_{m + 1}$} (m-2-2) edge [bend left=30] node[above,sloped] {$f_m \circ \partial^{F}_{m + 1}$} (m-2-3) edge [bend right=50] node[below,sloped] {$0$} (m-3-2)
(m-2-2) edge node[above] {$\widetilde{\beta}_m$} (m-2-3) edge node[left] {$\rho_Z$} (m-3-2)
(m-3-2) edge node[below] {$\beta$} (m-3-3)
(m-2-3) edge node[right] {$\pi^X_m$} (m-3-3);
\end{tikzpicture} \]
On the other hand, $\partial^{\widetilde{Z}}_m \circ h_m = \partial^X_m \circ \widetilde{\beta}_m \circ h_m = \partial^X_m \circ f_m = f_{m - 1} \circ \partial^{F}_m$. Therefore, $h$ is a chain map satisfying $\widetilde{\beta} \circ h = f$.

Since the map $\mathcal{E}xt^1_{\mathcal{C}}(\frac{X_m}{B_m(X)}, C) \longrightarrow \mathcal{E}xt^1_{\Complexes}(X, S^m(C))$ constructed by Gillespie is monic, so is the restriction $\mathcal{E}xt^1_{\mathcal{C}}(\mathcal{F}; \frac{X_m}{B_m(X)}, C) \rightarrow \mathcal{E}xt^1_{\Complexes}(\widetilde{\mathcal{F}}; X, S^{m}(C))$. \\ 

\item[{\bf (2)}] Suppose $0 \longrightarrow C \longrightarrow \frac{Z_m}{B_m(Z)} \longrightarrow \frac{X_m}{B_m(X)} \longrightarrow 0$ is an exact and ${\rm Hom}_{\mathcal{C}}(-,\mathcal{G})$-exact sequence. Let $G \in \widetilde{\mathcal{G}}$ and consider a chain map $f : S^m(C) \longrightarrow G$. We construct a chain map $h : \widetilde{Z} \longrightarrow G$ such that $h \circ \widetilde{\alpha} = f$.

Since $\partial^{G}_m \circ f_m = 0$, there exists a unique map $\overline{f_m}$ in completing the following commutative diagram: \\
\[ \begin{tikzpicture}
\matrix (m) [matrix of math nodes, row sep=2em, column sep=2.5em, text height=1.5ex, text depth=0.25ex]
{ 0 & Z_m(G) & G_m & Z_{m - 1}(G) & 0 \\ & C \\ };
\path[->]
(m-1-1) edge (m-1-2)
(m-1-4) edge (m-1-5)
(m-2-2) edge node[below,sloped] {$f_m$} (m-1-3)
(m-1-2) edge node[above] {$i_{Z_m(G)}$} (m-1-3)
(m-1-3) edge node[above] {$\widehat{\partial^{G}_m}$} (m-1-4);
\path[dotted,->]
(m-2-2) edge node[left] {$\overline{f_m}$} (m-1-2);
\end{tikzpicture} \]
On the other hand, $0 \longrightarrow C \longrightarrow Z \longrightarrow \frac{X_m}{B_m(X)} \longrightarrow 0$ is ${\rm Hom}_{\mathcal{C}}(-,\mathcal{G})$-exact and $Z_m(G) \in \mathcal{G}$, so there is a morphism $Z \stackrel{h'_m}\longrightarrow Z_m(G)$ such that $h'_m \circ \alpha = \overline{f_m}$. Set $h_k = 0$ for every $k \neq m$ and $h_m := i_{Z_m(G)} \circ h'_m \circ \rho_{Z}$.
\begin{align*}
\partial^{G}_m \circ h_m & = 0 = h_{m - 1} \circ \partial^{\widetilde{Z}}_m, \\
h_m \circ \partial^{\widetilde{Z}}_{m + 1} & = i_{Z_m(G)} \circ h'_m \circ \rho_{Z} \circ \partial^{\widetilde{Z}}_{m + 1} = 0 = \partial^{G}_{m + 1} \circ h_{m + 1}, \\
h_m \circ \widetilde{\alpha}_m & = i_{Z_m(G)} \circ h'_m \circ \rho_{Z} \circ \widetilde{\alpha}_m = i_{Z_m(G)} \circ h'_m \circ \alpha = i_{Z_m(G)} \circ \overline{f_m} = f_m.
\end{align*}
Hence, $h$ is a chain map satisfying $h \circ \widetilde{\alpha} = f$. The map $\mathcal{E}xt^1_{\mathcal{C}}(\frac{X_m}{B_m(X)}, C; \mathcal{G}) \hookrightarrow \mathcal{E}xt^1_{\Complexes}(X, S^m(C); \widetilde{\mathcal{G}})$ is  a group monomorphism. 
\end{itemize}
For the last part of the statement, we prove that {\bf (1)} and {\bf (3)} are invertible if $X$ and $Y$ are exact and ${\rm Hom}_{\mathcal{C}}(\mathcal{F},-)$-exact. The arguments we present below are dual for {\bf (2)} and {\bf (4)}. 
\begin{itemize}
\item[{\bf (1)}] Suppose we are given a short exact sequence $0 \longrightarrow S^m(C) \longrightarrow Z \longrightarrow X \longrightarrow 0$. By \cite[Lemma 3.2]{Perez}, the induced sequence $0 \longrightarrow C \longrightarrow \frac{Z_m}{B_m(Z)} \longrightarrow \frac{X_m}{B_m(X)} \longrightarrow 0$ is exact since $X$ is exact. This defines the inverse of the map $\mathcal{E}xt^1_{\mathcal{C}}(\mathcal{F}; \frac{X_m}{B_m(X)}, C) \hookrightarrow \mathcal{E}xt^1_{\Complexes}(\widetilde{\mathcal{F}}; X, S^{m}(C))$. It is only left to show that $0 \longrightarrow C \longrightarrow \frac{Z_m}{B_m(Z)} \longrightarrow \frac{X_m}{B_m(X)} \longrightarrow 0$ is also ${\rm Hom}_{\mathcal{C}}(\mathcal{F},-)$-exact. 

First, note the exact sequence of $m$th cycles $0 \longrightarrow Z_m(X) \longrightarrow X_m \longrightarrow Z_{m-1}(X) \longrightarrow 0$ is ${\rm Hom}_{\mathcal{C}}(\mathcal{F},-)$-exact. For $\cdots \longrightarrow {\rm Hom}_{\mathcal{C}}(F,X_{m+1}) \longrightarrow {\rm Hom}_{\mathcal{C}}(F,X_m) \longrightarrow {\rm Hom}_{\mathcal{C}}(F,X_{m-1}) \longrightarrow \cdots$ is exact for every $F \in \mathcal{F}$, and so $0 \longrightarrow {\rm Ker}({\rm Hom}_{\mathcal{C}}(F,\partial^X_{m})) \longrightarrow {\rm Hom}_{\mathcal{C}}(F,X_m) \longrightarrow {\rm Ker}({\rm Hom}_{\mathcal{C}}(F,\partial^X_{m-1})) \longrightarrow 0$ is exact for every $m \in \mathbb{Z}$. On the other hand, it is not hard to see that ${\rm Ker}({\rm Hom}_{\mathcal{C}}(F,\partial^X_{m})) \cong {\rm Hom}_{\mathcal{C}}(F,Z_m(X))$.  

Since the sequence $0 \longrightarrow S^m(C) \longrightarrow Z \stackrel{g}\longrightarrow X \longrightarrow 0$ is exact and ${\rm Hom}(\widetilde{\mathcal{F}},-)$-exact, we can deduce $0 \longrightarrow C \stackrel{f_m}\longrightarrow Z_m \stackrel{g_m}\longrightarrow X_m \longrightarrow 0$ is ${\rm Hom}_{\mathcal{C}}(\mathcal{F},-)$-exact, by considering disk complexes $D^m(F)$ with $F \in \mathcal{F}$ as in the proof of Proposition \ref{isosdw}. We show $0 \longrightarrow C \longrightarrow \frac{Z_m}{B_m(Z)} \stackrel{\overline{g_m}}\longrightarrow \frac{X_m}{B_m(X)} \longrightarrow 0$ is also ${\rm Hom}_{\mathcal{C}}(\mathcal{F},-)$-exact. Consider a map $h : F \longrightarrow \frac{X_m}{B_m(X)}$ with $F \in \mathcal{F}$. By the previous comments, there exists a map $h' : F \longrightarrow X_m$ such that \\
\[ \begin{tikzpicture}
\matrix (m) [matrix of math nodes, row sep=2em, column sep=2em, text height=1.5ex, text depth=0.25ex]
{ & & & F \\ 0 & B_m(X) & X_m & \frac{X_m}{B_m(X)} & 0 \\ };
\path[->]
(m-1-4) edge node[right] {$h$} (m-2-4)
(m-2-1) edge (m-2-2) (m-2-2) edge (m-2-3) (m-2-3) edge node[below] {$\pi^X_m$} (m-2-4) (m-2-4) edge (m-2-5);
\path[dotted,->]
(m-1-4) edge node[above,sloped] {$\exists\mbox{ } h'$} (m-2-3);
\end{tikzpicture} \]
commutes. It follows the existence of a map $h'' : F \longrightarrow Z_m$ making the following diagram commute: \\
\[ \begin{tikzpicture}
\matrix (m) [matrix of math nodes, row sep=2em, column sep=2em, text height=1.5ex, text depth=0.25ex]
{ & & & F \\ 0 & C & Z_m & X_m & 0 \\ };
\path[->]
(m-1-4) edge node[right] {$h'$} (m-2-4)
(m-2-1) edge (m-2-2) (m-2-2) edge (m-2-3) (m-2-3) edge node[below] {$g_m$} (m-2-4) (m-2-4) edge (m-2-5);
\path[dotted,->]
(m-1-4) edge node[above,sloped] {$\exists\mbox{ }h''$} (m-2-3);
\end{tikzpicture} \]
We $\overline{g_m} \circ (\pi^Z_m \circ h'') = \pi^X_m \circ g_m \circ h'' = \pi^X_m \circ h' = h$, and hence $0 \longrightarrow C \longrightarrow \frac{Z_m}{B_m(Z)} \stackrel{\overline{g_m}}\longrightarrow \frac{X_m}{B_m(X)} \longrightarrow 0$ is ${\rm Hom}_{\mathcal{C}}(\mathcal{F},-)$-exact.

\item[{\bf (3)}] First, the map $\mathcal{E}xt^1_{\mathcal{C}}(C,Z_m(Y)) \hookrightarrow \mathcal{E}xt^1_{\Complexes}(S^m(C), Y)$ is invertible if $Y$ is exact, since every short exact sequence $0 \longrightarrow Y \longrightarrow Z \longrightarrow S^m(C) \longrightarrow 0$ induces in $\mathcal{C}$ a short exact sequence of cycles $0 \longrightarrow Z_m(Y) \longrightarrow Z_m(Z) \longrightarrow C \longrightarrow 0$, by \cite[Lemma 3.2]{Perez}. Consider $F \in \mathcal{F}$ and suppose that the sequence $0 \longrightarrow Y \longrightarrow Z \longrightarrow S^m(C) \longrightarrow 0$ is ${\rm Hom}_{\Complexes}(\widetilde{\mathcal{F}}, -)$-exact. We show that the sequence $0 \longrightarrow {\rm Hom}_{\mathcal{C}}(F, Z_m(Y)) \longrightarrow {\rm Hom}_{\mathcal{C}}(F, Z_m(Z)) \longrightarrow {\rm Hom}_{\mathcal{C}}(F, C) \longrightarrow 0$ is exact. Consider the following commutative grid
\[ \begin{tikzpicture}
\matrix (m) [matrix of math nodes, row sep=2em, column sep=2em, text height=1.5ex, text depth=0.25ex]
{ & 0 & 0 & 0 \\ 0 & {\rm Hom}_{\mathcal{C}}(F, Z_m(X)) & {\rm Hom}_{\mathcal{C}}(F, Z_m(Z)) & {\rm Hom}_{\mathcal{C}}(F, C) & 0 \\ 0 & {\rm Hom}_{\mathcal{C}}(F, X_m) & {\rm Hom}_{\mathcal{C}}(F, Z_m) & {\rm Hom}_{\mathcal{C}}(F, C) & 0 \\ 0 & {\rm Hom}_{\mathcal{C}}(F, Z_{m-1}(X)) & {\rm Hom}_{\mathcal{C}}(F, Z_{m-1}(Z)) & 0 \\ & 0 & 0 \\ };
\path[->]
(m-1-2) edge (m-2-2) (m-1-3) edge (m-2-3) (m-1-4) edge (m-2-4)
(m-2-1) edge (m-2-2) (m-2-2) edge (m-2-3) edge (m-3-2) (m-2-3) edge (m-2-4) edge (m-3-3) (m-2-4) edge (m-2-5)
(m-3-1) edge (m-3-2) (m-3-2) edge (m-3-3) edge (m-4-2) (m-3-3) edge (m-3-4) edge (m-4-3) (m-3-4) edge (m-3-5) edge (m-4-4)
(m-4-1) edge (m-4-2) (m-4-2) edge node[below] {$\cong$} (m-4-3) edge (m-5-2) (m-4-3) edge (m-4-4) edge (m-5-3);
\path[-,font=\scriptsize]
(m-2-4) edge [double, thick, double distance=4pt] (m-3-4);
\end{tikzpicture} \]
where the central row is exact for being naturally isomorphic to sequence \[ 0 \longrightarrow {\rm Hom}_{\Complexes}(D^m(F),X) \longrightarrow {\rm Hom}_{\Complexes}(D^m(F), Z) \longrightarrow {\rm Hom}_{\Complexes}(D^m(F),S^m(C)) \longrightarrow 0, \] which is exact since $D^m(F) \in \widetilde{\mathcal{F}}$. The bottom row and the rightmost column are clearly exact. On the other hand, the leftmost column is exact since $X$ is an exact and ${\rm Hom}_{\mathcal{C}}(\mathcal{F},-)$-exact complex. Consider an arrow $r : F \longrightarrow Z_{m-1}(Z)$. Notice that $f_{m-1} : X_{m-1} \longrightarrow Z_{m-1}$ is an isomorphism, and so is $Z_{m-1}(f) : Z_{m-1}(X) \longrightarrow Z_{m-1}(Z)$. Then we have an arrow $Z_{m-1}(f)^{-1} \circ r : F \longrightarrow Z_{m-1}(X)$. Since the leftmost column is exact, there exists an arrow $l : F \longrightarrow X_m$ such that $Z_{m-1}(f)^{-1} \circ r = \pi^X_m \circ l$, where $\rho^X_m$ is the arrow $X_m \longrightarrow Z_{m-1}(X)$ induced by the universal property of kernels. Consider $f_m \circ l : F \longrightarrow Z_m$. We have $\rho^Z_m \circ (f_m \circ l) = Z_{m-1}(f) \circ \rho^X_m \circ l = Z_{m-1}(f) \circ Z_{m-1}(f)^{-1} \circ r = r$, and hence ${\rm Hom}_{\mathcal{C}}(F, Z_m) \longrightarrow {\rm Hom}_{\mathcal{C}}(F, Z_{m-1}(Z))$ is surjective. Finally, using diagram chasing, one can show that the top row is also exact. 
\end{itemize}
\end{proof}

\begin{proposition}\label{isospheres} Let $\mathcal{C}$ be an Abelian category. Let $C \in {\rm Ob}(\mathcal{C})$ and $X$ and $Y$ be exact chain complexes. There exist natural monomorphisms: 
\begin{itemize}[noitemsep, topsep=-10pt]
\item[{\bf (1)}] $\mathcal{E}xt^i_{\Complexes}({\rm dw}\widetilde{\mathcal{F}}; S^m(C), Y) \hookrightarrow \mathcal{E}xt^i_{\mathcal{C}}(\mathcal{F}; C, Z_m(Y))$.

\item[{\bf (2)}] $\mathcal{E}xt^i_{\Complexes}(X, S^m(C); {\rm dw}\widetilde{\mathcal{G}}) \hookrightarrow \mathcal{E}xt^i_{\mathcal{C}}(\frac{X_m}{B_m(X)}, C; \mathcal{G})$. 

\item[{\bf (3)}] $\mathcal{E}xt^i_{\Complexes}({\rm dw}\widetilde{\mathcal{F}}; X, S^m(C)) \hookrightarrow \mathcal{E}xt^i_{\mathcal{C}}(\mathcal{F}; \frac{X_m}{B_m(X)}, C)$ provided $X$ is ${\rm Hom}_{\mathcal{C}}(\mathcal{F},-)$-exact.

\item[{\bf (4)}] $\mathcal{E}xt^i_{\Complexes}(S^m(C), Y; {\rm dw}\widetilde{\mathcal{G}}) \hookrightarrow \mathcal{E}xt^i_{\mathcal{C}}(C, Z_m(Y); \mathcal{G})$ provided $Y$ is ${\rm Hom}_{\mathcal{C}}(-,\mathcal{G})$-exact.
\end{itemize}
\end{proposition}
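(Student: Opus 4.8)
The plan is to realize each of \textbf{(1)}--\textbf{(4)} as the restriction of the inverse of one of Gillespie's isomorphisms. By \cite[Lemma 4.2]{Gillespie} (for \textbf{(2)}, \textbf{(3)}) and its dual (for \textbf{(1)}, \textbf{(4)}), when $X$ (resp.\ $Y$) is exact the natural monomorphisms $\mathcal{E}xt^1_{\mathcal{C}}(\frac{X_m}{B_m(X)}, C)\hookrightarrow\mathcal{E}xt^1_{\Complexes}(X, S^m(C))$ and $\mathcal{E}xt^1_{\mathcal{C}}(C, Z_m(Y))\hookrightarrow\mathcal{E}xt^1_{\Complexes}(S^m(C), Y)$ are isomorphisms; write $\Psi$ for the inverse bijection in each case. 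Concretely, $\Psi$ sends the class of a short exact sequence $0\to S^m(C)\to Z\to X\to 0$ to the class of $0\to C\to\frac{Z_m}{B_m(Z)}\to\frac{X_m}{B_m(X)}\to 0$ obtained by applying $Q_m=\frac{(-)_m}{B_m(-)}$, which is short exact because $X$ is exact (\cite[Lemma 3.2]{Perez}); dually it applies $Z_m$ when the sphere complex sits on the right. Since $\Psi$ is already injective on all of $\mathcal{E}xt^1_{\Complexes}(-,-)$, it suffices to check that $\Psi$ maps the relative subgroup $\mathcal{E}xt^1_{\Complexes}({\rm dw}\widetilde{\mathcal{F}}; X, S^m(C))$ (resp.\ $\mathcal{E}xt^1_{\Complexes}(S^m(C), Y; {\rm dw}\widetilde{\mathcal{G}})$, etc.) into the relative subgroup $\mathcal{E}xt^1_{\mathcal{C}}(\mathcal{F};\,\cdot\,)$ (resp.\ $\mathcal{E}xt^1_{\mathcal{C}}(\,\cdot\,;\mathcal{G})$); the restricted map is then automatically a monomorphism, and natural by naturality of $\Psi$ and of the subgroup inclusions. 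The case $i>1$ reduces to $i=1$ by splitting a long extension into short exact sequences, so I focus on $i=1$, and, since passing to $\mathcal{C}^{\mathrm{op}}$ and $\Complexes^{\mathrm{op}}$ swaps \textbf{(1)} with \textbf{(2)} and \textbf{(3)} with \textbf{(4)}, it is enough to treat the covariant statements \textbf{(1)} and \textbf{(3)}.

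Statement \textbf{(1)} is the easy case, because the cycle functor $Z_m$ is exactly what the sphere complex ``sees'' in the covariant slot. Fix $F\in\mathcal{F}$; then $S^m(F)\in{\rm dw}\widetilde{\mathcal{F}}$, so applying ${\rm Hom}_{\Complexes}(S^m(F),-)$ to the ${\rm Hom}_{\Complexes}({\rm dw}\widetilde{\mathcal{F}},-)$-exact sequence $0\to Y\to Z\to S^m(C)\to 0$ gives a short exact sequence of abelian groups. Under the natural isomorphism ${\rm Hom}_{\Complexes}(S^m(F),-)\cong{\rm Hom}_{\mathcal{C}}(F, Z_m(-))$ of Proposition~1.1\textbf{(4)} together with $Z_m(S^m(C))=C$, this sequence is precisely ${\rm Hom}_{\mathcal{C}}(F,-)$ applied to $0\to Z_m(Y)\to Z_m(Z)\to C\to 0$, so the latter is ${\rm Hom}_{\mathcal{C}}(\mathcal{F},-)$-exact, i.e.\ lies in $\mathcal{E}xt^1_{\mathcal{C}}(\mathcal{F}; C, Z_m(Y))$. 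Statement \textbf{(2)} is the op-dual, using ${\rm Hom}_{\Complexes}(-, S^m(G))\cong{\rm Hom}_{\mathcal{C}}(Q_m(-), G)$ (Proposition~1.1\textbf{(3)}) and $Q_m(S^m(C))=C$; note that neither \textbf{(1)} nor \textbf{(2)} uses the auxiliary Hom-exactness hypothesis.

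Statement \textbf{(3)} is where the hypothesis ``$X$ is ${\rm Hom}_{\mathcal{C}}(\mathcal{F},-)$-exact'' enters, and it is the step I expect to be the main obstacle. The point is that $Q_m$ is not the covariant half of an adjunction with a disk or a sphere complex, so ${\rm Hom}_{\Complexes}({\rm dw}\widetilde{\mathcal{F}},-)$-exactness of $S=(0\to S^m(C)\to Z\to X\to 0)$ only yields directly — via the probing complex $D^m(F)\in{\rm dw}\widetilde{\mathcal{F}}$ and Proposition~1.1\textbf{(2)} — that the degree-$m$ sequence $0\to C\to Z_m\to X_m\to 0$ is ${\rm Hom}_{\mathcal{C}}(\mathcal{F},-)$-exact. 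To bridge from $X_m$ to $\frac{X_m}{B_m(X)}$ I would argue, exactly as in the proof of the preceding proposition, that because $X$ is exact and ${\rm Hom}_{\mathcal{C}}(\mathcal{F},-)$-exact the cycle sequence $0\to Z_m(X)\to X_m\to Z_{m-1}(X)\to 0$ is ${\rm Hom}_{\mathcal{C}}(\mathcal{F},-)$-exact, and then identify $Z_{m-1}(X)\cong\frac{X_m}{B_m(X)}$ (legitimate since $B_m(X)=Z_m(X)$ by exactness of $X$); hence $\pi^X_m\colon X_m\to\frac{X_m}{B_m(X)}$ induces an epimorphism on ${\rm Hom}_{\mathcal{C}}(F,-)$. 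Given $g\colon F\to\frac{X_m}{B_m(X)}$, lift it to $g'\colon F\to X_m$, then lift $g'$ along the epimorphism $Z_m\to X_m$ (available from the $D^m(F)$-computation) to $g''\colon F\to Z_m$; the composite $\pi^Z_m\circ g''\colon F\to\frac{Z_m}{B_m(Z)}$ is a preimage of $g$ under the induced comparison $\frac{Z_m}{B_m(Z)}\to\frac{X_m}{B_m(X)}$, so $0\to C\to\frac{Z_m}{B_m(Z)}\to\frac{X_m}{B_m(X)}\to 0$ is ${\rm Hom}_{\mathcal{C}}(\mathcal{F},-)$-exact (it is exact by \cite[Lemma 3.2]{Perez}). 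Statement \textbf{(4)} is the op-dual: one extends a given $\varphi\colon Z_m(Y)\to G$ first across $Z_m(Y)\hookrightarrow Y_m$ — possible because $Y$ being exact and ${\rm Hom}_{\mathcal{C}}(-,\mathcal{G})$-exact forces $0\to Z_m(Y)\to Y_m\to Z_{m-1}(Y)\to 0$ to be ${\rm Hom}_{\mathcal{C}}(-,\mathcal{G})$-exact — then across $Y_m\hookrightarrow Z_m$ coming from the degree-$m$ term $0\to Y_m\to Z_m\to C\to 0$ of the given sequence (which is ${\rm Hom}_{\mathcal{C}}(-,\mathcal{G})$-exact via the probing complex $D^{m+1}(G)$ and Proposition~1.1\textbf{(1)}), and restricts the resulting map $Z_m\to G$ to $Z_m(Z)$.

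In short, apart from \textbf{(3)} and \textbf{(4)} the argument is a one-line application of the sphere adjunction, and even there no single step is deep; the real care goes into choosing the right probing complex ($S^m(F)$, $D^m(F)$, $S^m(G)$, or $D^{m+1}(G)$ — each manifestly in the relevant degreewise class) for the matching adjunction of Proposition~1.1, and into assembling the two successive lifts (resp.\ extensions) in \textbf{(3)}--\textbf{(4)} so that the comparison squares between the ``term'' sequence and the ``cycle''/``quotient'' sequence commute. The group-homomorphism property, the naturality in $C$, $X$, $Y$, and the reduction from $i>1$ to $i=1$ are then formal.
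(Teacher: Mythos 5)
Your proposal is correct and follows essentially the same route as the paper: restrict the inverse of Gillespie's isomorphism (which exists because $X$, $Y$ are exact) to the relative subgroup, and verify well-definedness by probing with $S^m(F)$ or $S^n(G)$ for the "easy" items and with disk complexes $D^m(F)$, $D^{m+1}(G)$ followed by the two-step lift through $X_m \to \frac{X_m}{B_m(X)}$ (resp. the dual two-step extension) for items \textbf{(3)} and \textbf{(4)}. The paper writes out only \textbf{(2)} and \textbf{(3)} and leaves the rest to duality, exactly as you do; your identification of where each auxiliary Hom-exactness hypothesis enters matches the paper's argument.
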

\begin{proof}
We only prove {\bf (2)} and {\bf (3)} for the case $i = 1$. We know by \cite[Lemma 4.2]{Gillespie} that the mapping \[ 0 \longrightarrow S^m(C) \longrightarrow Z \longrightarrow X \longrightarrow 0 \mbox{ \ $\mapsto$ \ } 0 \longrightarrow C \longrightarrow \frac{Z_m}{B_m(Z)} \longrightarrow \frac{X_m}{B_m(X)} \longrightarrow 0 \] gives rise to an isomorphism $\mathcal{E}xt^1_{\Complexes}(X, S^m(C)) \hookrightarrow \mathcal{E}xt^1_{\mathcal{C}}(\frac{X_m}{B_m(X)}, C)$, since $X$ is exact. It suffices to show that its restriction on $\mathcal{E}^1_{\Complexes}(X, S^m(C); {\rm dw}\widetilde{\mathcal{G}})$ is well defined. So consider an exact and ${\rm Hom}_{\Complexes}(-,{\rm dw}\widetilde{\mathcal{G}})$-exact sequence $0 \longrightarrow S^n(M) \longrightarrow Z \longrightarrow X \longrightarrow 0$. If $G \in \mathcal{G}$, then $S^n(G) \in {\rm dw}\widetilde{G}$. We have the following commutative diagram
\[ \begin{tikzpicture}
\matrix (m) [matrix of math nodes, row sep=2.5em, column sep=2em, text height=1.5ex, text depth=0.25ex]
{ 0 & {\rm Hom}_{\Complexes}(X,S^n(G)) & {\rm Hom}_{\Complexes}(Z,S^n(G)) & {\rm Hom}_{\Complexes}(S^n(M),S^n(G)) & 0 \\ 0 & {\rm Hom}_{\mathcal{C}}( \frac{X_n}{B_n(X)}, G ) & {\rm Hom}_{\mathcal{C}}( \frac{Z_n}{B_n(Z)}, G ) & {\rm Hom}_{\mathcal{C}}(M, G) & 0 \\ };
\path[->]
(m-1-1) edge (m-1-2) (m-1-2) edge (m-1-3) edge node[right] {$\cong$} (m-2-2) (m-1-3) edge (m-1-4) edge node[right] {$\cong$} (m-2-3) (m-1-4) edge (m-1-5) edge node[right] {$\cong$} (m-2-4)
(m-2-1) edge (m-2-2) (m-2-2) edge (m-2-3) (m-2-3) edge (m-2-4) (m-2-4) edge (m-2-5);
\end{tikzpicture} \]
where the top row is exact. It follows the bottom row is also exact. 

Now suppose that the sequence $0 \longrightarrow S^m(C) \longrightarrow Z \stackrel{f}\longrightarrow X \longrightarrow 0$ is ${\rm Hom}_{\Complexes}({\rm dw}\widetilde{\mathcal{F}},-)$-exact and that $X$ is ${\rm Hom}_{\mathcal{C}}(\mathcal{F},-)$-exact. Given $F \in \mathcal{F}$ and an arrow $h : F \longrightarrow \frac{X_m}{B_m(X)}$, we construct an arrow $F \longrightarrow \frac{Z_m}{B_m(Z)}$ such that the following diagram commutes:
\[ \begin{tikzpicture}
\matrix (m) [matrix of math nodes, row sep=2em, column sep=3em, text height=1.5ex, text depth=0.25ex]
{ & & & F \\ 0 & C & \frac{Z_m}{B_m(Z)} & \frac{X_m}{B_m(X)} & 0 \\ };
\path[->]
(m-1-4) edge node[right] {$h$} (m-2-4)
(m-2-1) edge (m-2-2) (m-2-2) edge (m-2-3) (m-2-3) edge node[below] {$Q_m(f)$} (m-2-4) (m-2-4) edge (m-2-5);
\path[dotted,->]
(m-1-4) edge (m-2-3);
\end{tikzpicture} \]
Since $X$ is ${\rm Hom}_{\mathcal{C}}(\mathcal{F},-)$-exact, so is the sequence $0 \longrightarrow B_m(X) \longrightarrow X_m \stackrel{\pi^X_m}\longrightarrow \frac{X_m}{B_m(X)} \longrightarrow 0$, and hence there exists an arrow $h' : F \longrightarrow X_m$ such that $\pi^X_m \circ h' = h$. Considering the complex $D^m(F)$, we can deduce that $0 \longrightarrow C \longrightarrow Z_m \longrightarrow X_m \longrightarrow 0$ is ${\rm Hom}_{\mathcal{C}}(\mathcal{F}, -)$-exact, as in the proof of Proposition \ref{isosdw}. It follows there exists an arrow $h'' : F \longrightarrow Z_m$ such that $f \circ h'' = h'$. Finally, we have that $Q_m(f) \circ (\pi^Z_m \circ h'') = \pi^X_m \circ f_m \circ h'' = \pi^X_m \circ h' = h$ and the result follows.   
\end{proof}

\section{Applications to Gorenstein homological algebra}

In this section we shall work in the particular case where $\mathcal{C}$ is the category of left $R$-modules, with $R$ a \underline{Gorenstein ring}; i.e. $R$ is left and right Noetherian and has finite injective dimension as a left and right $R$-module. It can be shown that both dimensions coincide to a non-negative integer $n$, and in this case we say $R$ is an $n$-Gorenstein ring. In this particular setting, another theory of homological algebra can be developed from the notions of Gorenstein-projective and Gorenstein-injective modules (and complexes). 

If $R$ is a $n$-Gorenstein ring, then the the following conditions are equivalent for every left $R$-module $M$: 
\begin{itemize}[noitemsep, topsep=-10pt]
\item[{\bf (1)}] $M$ has finite projective dimension.
\item[{\bf (2)}] $M$ has finite injective dimension.
\item[{\bf (3)}] $M$ has projective dimension $\leq n$.
\item[{\bf (4)}] $M$ has injective dimension $\leq n$. 
\end{itemize}
This fact was proven by Y. Iwanaga, and it is after him that Gorenstein rings are also known as $n$-Iwanaga-Gorenstein rings. The reader can see the details in \cite[Theorem 9.1.10]{EJ}. We shall denote by $\mathcal{W}$ the class of modules with finite projective dimension. For our purposes, a module over a Gorenstein ring is \underline{Gorenstein-projective} if ${\rm Ext}^1_R(M,W) = 0$ for every $W \in \mathcal{W}$. Gorenstein-injective modules are defined dually. 

If $R$ is a Gorenstein ring, it is known that the the class $\mathcal{GP}roj$ of Gorenstein-projective modules is special pre-covering (See \cite[Theorem 11.5.1]{EJ}). On the other hand, the class $\mathcal{GI}nj$ of Gorenstein-injective modules is special pre-enveloping (See \cite[Theorem 11.3.2]{EJ}). 

Consider the extension functors ${\rm Ext}^i_{\Modl}(\mathcal{GP}roj; -, -)$ and ${\rm Ext}^i_{\Modl}(-,-; \mathcal{GI}nj)$. In \cite[Theorem 12.1.4]{EJ}, it is proven that these functors are naturally isomorphic. So we shall use the notation ${\rm GExt}^i_R(-,-)$ for both ${\rm Ext}^i_{\Modl}(\mathcal{GP}roj; -, -)$ and ${\rm Ext}^i_{\Modl}(-,-; \mathcal{GI}nj)$. We shall call ${\rm GExt}^i_R(-,-)$ the \underline{Gorenstein-extension} \underline{functors}. By Theorem \ref{isosext}, we obtain the following result. \\

\begin{corollary}\label{isosGext} If $R$ is a Gorenstein ring, then for every pair of left $R$-modules $M$ and $N$ one has the isomorphisms ${\rm GExt}^i_R(M, N) \cong \mathcal{E}xt^i_{\Modl}(\mathcal{GP}roj; M, N) \cong \mathcal{E}xt^i_{\Modl}(M, N; \mathcal{GI}nj)$. 
\end{corollary}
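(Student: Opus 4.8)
The plan is to obtain the two isomorphisms as a direct specialization of Theorem \ref{isosext}, combined with the known identification of ${\rm GExt}^i_R(-,-)$ with both ${\rm Ext}^i_{\Modl}(\mathcal{GP}roj; -, -)$ and ${\rm Ext}^i_{\Modl}(-, -; \mathcal{GI}nj)$ recorded just before the statement.

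First I would use that, since $R$ is Gorenstein, the class $\mathcal{GP}roj$ of Gorenstein-projective left $R$-modules is a special pre-covering class (\cite[Theorem 11.5.1]{EJ}). Applying Theorem \ref{isosext} with $\mathcal{C} = \Modl$ and $\mathcal{F} = \mathcal{GP}roj$ then yields, for every pair of left $R$-modules $M$ and $N$, a group isomorphism
\[ {\rm Ext}^i_{\Modl}(\mathcal{GP}roj; M, N) \cong \mathcal{E}xt^i_{\Modl}(\mathcal{GP}roj; M, N). \]
Dually, the class $\mathcal{GI}nj$ of Gorenstein-injective modules is special pre-enveloping (\cite[Theorem 11.3.2]{EJ}), so the dual half of Theorem \ref{isosext}, applied with $\mathcal{G} = \mathcal{GI}nj$, gives
\[ {\rm Ext}^i_{\Modl}(M, N; \mathcal{GI}nj) \cong \mathcal{E}xt^i_{\Modl}(M, N; \mathcal{GI}nj). \]
Finally, invoking \cite[Theorem 12.1.4]{EJ} one has the natural isomorphism ${\rm Ext}^i_{\Modl}(\mathcal{GP}roj; M, N) \cong {\rm Ext}^i_{\Modl}(M, N; \mathcal{GI}nj)$, whose common value is by definition ${\rm GExt}^i_R(M, N)$; composing this with the two displays above produces the asserted chain
\[ {\rm GExt}^i_R(M, N) \cong \mathcal{E}xt^i_{\Modl}(\mathcal{GP}roj; M, N) \cong \mathcal{E}xt^i_{\Modl}(M, N; \mathcal{GI}nj). \]

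The only thing requiring attention is checking that the hypotheses of Theorem \ref{isosext} really hold in this concrete setting, which reduces precisely to the two structural facts quoted above; these rest on $R$ being Gorenstein, so that the class $\mathcal{W}$ of modules of finite projective dimension behaves well and the cotorsion pairs $(\mathcal{GP}roj, \mathcal{W})$ and $(\mathcal{W}, \mathcal{GI}nj)$ are complete. Naturality in $M$ and $N$ is inherited from the naturality of the maps $\Phi$ built in the proof of Theorem \ref{isosext} and of the comparison isomorphism of \cite[Theorem 12.1.4]{EJ}. I expect no genuine obstacle here: the corollary is exactly the translation of the abstract Baer-type description of $\mathcal{F}$-extension functors into the language of Gorenstein homological algebra.
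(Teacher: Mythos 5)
Your proposal is correct and follows exactly the route the paper takes: the corollary is obtained by specializing Theorem \ref{isosext} to $\mathcal{F} = \mathcal{GP}roj$ (special pre-covering by \cite[Theorem 11.5.1]{EJ}) and $\mathcal{G} = \mathcal{GI}nj$ (special pre-enveloping by \cite[Theorem 11.3.2]{EJ}), and then identifying the two relative ${\rm Ext}$ functors with ${\rm GExt}^i_R(-,-)$ via \cite[Theorem 12.1.4]{EJ}. Nothing further is needed.
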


In the context of chain complexes over a Gorenstein ring, $X$ is a Gorenstein-projective (resp. Gorenstein-injective) complex if it is left orthogonal (resp. right orthogonal) to every complex with finite projective dimension. By \cite[Proposition 3.1]{Perez}, we can notice that this class is given by $\widetilde{\mathcal{W}}$. The classes of Gorenstein-projective and Gorenstein-injective complexes are special pre-covering and special pre-enveloping, respectively (See \cite[Theorem 3.2.9 \& Corollary 3.3.7]{GR}). Moreover, these classes coincide with ${\rm dw}\widetilde{\mathcal{GP}roj}$ and ${\rm dw}\widetilde{\mathcal{GI}nj}$ (See \cite[Theorem 3.2.5 \&  Theorem 3.3.5]{GR}). From these comments and Theorem \ref{isosext}, the following result follows. However, in the context of Gorenstein homological algebra it is possible to present an easier proof. \\
 
\begin{corollary} Let $M$ be a left module over a Gorenstein ring $R$, and $X$ and $Y$ be two chain complexes over $R$. We have the following natural isomorphisms: 
\begin{itemize}[noitemsep, topsep=-10pt]
\item[{\bf (1)}] ${\rm GExt}^i_R(X_m, M) \cong {\rm GExt}^i_{\Cadl}(X, D^{m + 1}(M))$. 

\item[{\bf (2)}] ${\rm GExt}^i_R(M, Y_m) \cong {\rm GExt}^i_{\Cadl}(D^m(M), Y)$.
\end{itemize}
\end{corollary}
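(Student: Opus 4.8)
The plan is to bypass the Baer-sum machinery of Section~3 and argue directly with resolutions, exploiting two features of the Gorenstein setting: the identifications $\mathcal{GP}roj(\Cadl) = {\rm dw}\widetilde{\mathcal{GP}roj}$ and $\mathcal{GI}nj(\Cadl) = {\rm dw}\widetilde{\mathcal{GI}nj}$, and the adjunction isomorphisms recalled in the Introduction, namely ${\rm Hom}_R(X_m,C)\cong{\rm Hom}_{\Cadl}(X,D^{m+1}(C))$ and ${\rm Hom}_R(C,Y_m)\cong{\rm Hom}_{\Cadl}(D^m(C),Y)$, together with their exactness consequences. The first thing to record is that $D^m$ carries $\mathcal{GP}roj$ into $\mathcal{GP}roj(\Cadl)$ and $\mathcal{GI}nj$ into $\mathcal{GI}nj(\Cadl)$ (and likewise for $D^{m+1}$): the complex $D^m(G)$ is zero outside degrees $m$ and $m-1$ and equals $G$ there, hence it is degreewise Gorenstein-projective (resp.\ Gorenstein-injective) whenever $G$ is, and so lies in $\mathcal{GP}roj(\Cadl)$ (resp.\ $\mathcal{GI}nj(\Cadl)$) by the coincidence of these classes with the degreewise ones.

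For \textbf{(1)}, I would start from a left $\mathcal{GP}roj(\Cadl)$-resolution $\cdots\to P_1\to P_0\to X\to 0$ in $\Cadl$, which exists since Gorenstein-projective complexes form a special pre-covering class. Applying the exact functor $(-)_m$ gives a complex $\cdots\to (P_1)_m\to (P_0)_m\to X_m\to 0$ of Gorenstein-projective $R$-modules, which I claim is a left $\mathcal{GP}roj$-resolution of $X_m$: exactness is automatic since $(-)_m$ is exact, and for any $G\in\mathcal{GP}roj$ the isomorphism ${\rm Hom}_R(G,(P_k)_m)\cong{\rm Hom}_{\Cadl}(D^m(G),P_k)$ is natural in $k$, so ${\rm Hom}_R(G,(P_\bullet)_m\to X_m)\cong{\rm Hom}_{\Cadl}(D^m(G),P_\bullet\to X)$, and the right-hand side is exact because $D^m(G)\in\mathcal{GP}roj(\Cadl)$ and $P_\bullet\to X$ is ${\rm Hom}_{\Cadl}(\mathcal{GP}roj(\Cadl),-)$-exact. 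Hence ${\rm GExt}^i_R(X_m,M)$ is the $i$-th cohomology of ${\rm Hom}_R((P_\bullet)_m,M)$; the natural isomorphism ${\rm Hom}_R((P_k)_m,M)\cong{\rm Hom}_{\Cadl}(P_k,D^{m+1}(M))$ identifies this complex with ${\rm Hom}_{\Cadl}(P_\bullet,D^{m+1}(M))$, whose $i$-th cohomology is ${\rm GExt}^i_{\Cadl}(X,D^{m+1}(M))$. Every map in sight being natural, \textbf{(1)} follows.

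For \textbf{(2)}, I would run the dual argument with a right $\mathcal{GI}nj(\Cadl)$-coresolution $0\to Y\to E^0\to E^1\to\cdots$ of $Y$, available because Gorenstein-injective complexes are special pre-enveloping. Applying $(-)_m$ and using ${\rm Hom}_R((E^k)_m,G')\cong{\rm Hom}_{\Cadl}(E^k,D^{m+1}(G'))$ for $G'\in\mathcal{GI}nj$, with $D^{m+1}(G')\in\mathcal{GI}nj(\Cadl)$, shows that $0\to Y_m\to (E^0)_m\to (E^1)_m\to\cdots$ is a right $\mathcal{GI}nj$-coresolution of $Y_m$. Then ${\rm GExt}^i_R(M,Y_m)={\rm Ext}^i_R(M,Y_m;\mathcal{GI}nj)$ is the $i$-th cohomology of ${\rm Hom}_R(M,(E^\bullet)_m)\cong{\rm Hom}_{\Cadl}(D^m(M),E^\bullet)$, that is, ${\rm GExt}^i_{\Cadl}(D^m(M),Y)$. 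Here one also uses that ${\rm GExt}^i_{\Cadl}$ is well defined, i.e.\ that resolution by Gorenstein-projective complexes and coresolution by Gorenstein-injective complexes compute the same functor, which holds over $R$ by \cite[Theorem~12.1.4]{EJ} and by the same reasoning over $\Cadl$.

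The only step that is not purely formal is the claim that $(-)_m$ transports a Gorenstein-projective (resp.\ Gorenstein-injective) (co)resolution of $X$ (resp.\ $Y$) to one of $X_m$ (resp.\ $Y_m$); this is exactly where the characterizations $\mathcal{GP}roj(\Cadl) = {\rm dw}\widetilde{\mathcal{GP}roj}$ and $\mathcal{GI}nj(\Cadl) = {\rm dw}\widetilde{\mathcal{GI}nj}$ are needed, since they are what let us recognize $D^m(G)$ and $D^{m+1}(G')$ as objects of the relevant orthogonal class and hence propagate ${\rm Hom}$-exactness across the adjunction. As an alternative, one could chain Theorem~\ref{isosext}, applied in both $\Modl$ and $\Cadl$, with Proposition~\ref{isosdw}; the resolution-theoretic proof sketched above is the promised shorter route in this setting.
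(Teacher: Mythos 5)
Your proof is correct, and it is the same kind of argument as the paper's (both are modelled on \cite[Proposition 2.1.3]{EJ2}: compute the two ${\rm GExt}$ groups from resolutions that correspond to each other under the disk adjunctions, using $\mathcal{GP}roj(\Cadl)={\rm dw}\widetilde{\mathcal{GP}roj}$ and its injective dual to see that the transported resolution is again a Gorenstein one). The difference is the direction in which you transport: the paper resolves the \emph{module} $M$ by Gorenstein-projectives in $\Modl$ and applies $D^m$ to get a Gorenstein-projective resolution of $D^m(M)$ in $\Cadl$, whereas you resolve the \emph{complex} ($X$ in {\bf (1)}, $Y$ in {\bf (2)}) and apply the exact functor $(-)_m$, checking via ${\rm Hom}_R(G,(P_k)_m)\cong{\rm Hom}_{\Cadl}(D^m(G),P_k)$ that the result is still a left $\mathcal{GP}roj$-resolution. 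Your version of {\bf (1)} is if anything more self-contained, since the paper only writes out {\bf (2)}. The one place where your route costs something extra is {\bf (2)}: by coresolving $Y$ with Gorenstein-injective complexes you must invoke the balance ${\rm Ext}^i(\mathcal{GP}roj;-,-)\cong{\rm Ext}^i(-,-;\mathcal{GI}nj)$ over $\Cadl$ as well as over $\Modl$, which you acknowledge but do not prove; the paper's proof of {\bf (2)} sidesteps this by resolving $M$ projectively and so only needs the balance implicit in the notation ${\rm GExt}$. Since the paper itself already uses ${\rm GExt}^i_{\Cadl}$ as a two-sided derived functor, this is not a genuine gap, but if you want {\bf (2)} without the balance over $\Cadl$ you can simply run your argument for {\bf (1)} in the other variable, i.e.\ do what the paper does and push a resolution of $M$ through $D^m$.
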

\begin{proof} We only reprove {\bf (2)}. The argument we present next is based on \cite[Proposition 2.1.3]{EJ2}. Consider an exact left Gorenstein-projective resolution of $M$, say $\cdots \longrightarrow C_1 \longrightarrow C_0 \longrightarrow M \longrightarrow 0$. Since ${\rm dw}\widetilde{\mathcal{GP}roj}$ is the class of Gorenstein-projective complexes and $D^{m}(-)$ is an exact functor, we have that the complex $\cdots \longrightarrow D^m(C_1) \longrightarrow D^m(C_0) \longrightarrow D^m(M) \longrightarrow 0$ is an exact left Gorenstein-projective resolution of $D^m(M)$. We obtain the following commutative diagram where each vertical arrow is an isomorphism. 
\[ \begin{tikzpicture}
\matrix (m) [matrix of math nodes, row sep=2em, column sep=2em, text height=1.5ex, text depth=0.25ex]
{ 0 & {\rm Hom}_{\Cadl}(D^m(M), Y) & {\rm Hom}_{\Cadl}(D^m(C_0), Y) & {\rm Hom}_{\Cadl}(D^m(C_1, Y)) & \cdots \\ 0 & {\rm Hom}_{R}(M, Y_m) & {\rm Hom}_R(C_0, Y_m) & {\rm Hom}_R(C_1, Y_m) & \cdots \\ };
\path[->]
(m-1-1) edge (m-1-2) (m-1-2) edge (m-1-3) edge node[right] {$\cong$} (m-2-2) (m-1-3) edge (m-1-4) edge node[right] {$\cong$} (m-2-3) (m-1-4) edge (m-1-5) edge node[right] {$\cong$} (m-2-4)
(m-2-1) edge (m-2-2) (m-2-2) edge (m-2-3) (m-2-3) edge (m-2-4) (m-2-4) edge (m-2-5);
\end{tikzpicture} \] 
Isomorphic complexes have isomorphic homology, then ${\rm GExt}^i_R(M, Y_m) \cong {\rm GExt}^i_{\Cadl}(D^m(M), Y)$. 
\end{proof}

For a Gorenstein version of Proposition \ref{isospheres}, we do not need to assume that $X$ is ${\rm Hom}_{\mathcal{C}}(\mathcal{GP}roj,-)$-exact. We shall see the reason in the proof of the following proposition. \\

\begin{proposition}\label{isosgorspheres} Let $M$ be a left module over a Gorenstein ring $R$, and $X$ and $Y$ be exact chain complexes over $R$. Then we have natural isomorphisms:
\begin{itemize}[noitemsep, topsep=-10pt]
\item[{\bf (1)}] ${\rm GExt}^i_{\Cadl}(X, S^m(M)) \cong {\rm GExt}^i_{R}(\frac{X_m}{B_m(X)}, M)$.

\item[{\bf (2)}] ${\rm GExt}^i_{\Cadl}(S^m(M), Y) \cong {\rm GExt}^i_{R}(M, Z_m(Y))$. 
\end{itemize}
\end{proposition}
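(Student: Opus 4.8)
The plan is to follow the template of the proof of the preceding corollary, with the disk functors $D^m$ replaced by the sphere functors $S^m$ and parts (1)--(2) of Proposition~1.1 replaced by parts (3)--(4). Since $\mathcal{GP}roj$ is special pre-covering in $\Modl$, fix an exact left Gorenstein-projective resolution $\cdots\longrightarrow C_1\longrightarrow C_0\longrightarrow M\longrightarrow 0$; being assembled from special pre-covers it is automatically ${\rm Hom}_R(\mathcal{GP}roj,-)$-exact, hence computes ${\rm GExt}^i_R(M,-)$. Dually fix an exact, ${\rm Hom}_R(-,\mathcal{GI}nj)$-exact right Gorenstein-injective resolution $0\longrightarrow M\longrightarrow D^0\longrightarrow D^1\longrightarrow\cdots$, which computes ${\rm GExt}^i_R(-,M)={\rm Ext}^i_{\Modl}(-,M;\mathcal{GI}nj)$. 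On the side of chain complexes I will use that, over a Gorenstein ring, ${\rm GExt}^i_{\Cadl}$ is likewise computed by proper left resolutions by Gorenstein-projective complexes and by proper right resolutions by Gorenstein-injective complexes (the complex-level analogue of Corollary~\ref{isosGext}).

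For (2): apply the exact functor $S^m : \Modl\longrightarrow\Cadl$ to $C_\bullet\longrightarrow M$. Since the Gorenstein-projective complexes are exactly ${\rm dw}\widetilde{\mathcal{GP}roj}$, the complex $S^m(G)$ (concentrated in degree $m$ with value $G\in\mathcal{GP}roj$) is again Gorenstein-projective, so $\cdots\longrightarrow S^m(C_1)\longrightarrow S^m(C_0)\longrightarrow S^m(M)\longrightarrow 0$ is an exact resolution of $S^m(M)$ by Gorenstein-projective complexes. That it is proper follows from Proposition~1.1~(3): for any Gorenstein-projective complex $W$ there is a natural isomorphism ${\rm Hom}_{\Cadl}(W,S^m(-))\cong{\rm Hom}_R(W_m/B_m(W),-)$, and $W_m/B_m(W)$ is a Gorenstein-projective module (by the recalled description of Gorenstein-projective complexes, $W$ is exact with Gorenstein-projective cycles, so $W_m/B_m(W)\cong Z_{m-1}(W)$), whence ${\rm Hom}_{\Cadl}(W,S^m(C_\bullet))\cong{\rm Hom}_R(W_m/B_m(W),C_\bullet)$ is exact. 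Applying ${\rm Hom}_{\Cadl}(-,Y)$ and using the isomorphisms ${\rm Hom}_{\Cadl}(S^m(C_i),Y)\cong{\rm Hom}_R(C_i,Z_m(Y))$ of Proposition~1.1~(4), which are natural in $i$, we obtain an isomorphism of cochain complexes ${\rm Hom}_{\Cadl}(S^m(C_\bullet),Y)\cong{\rm Hom}_R(C_\bullet,Z_m(Y))$; taking cohomology gives the natural isomorphism ${\rm GExt}^i_{\Cadl}(S^m(M),Y)\cong{\rm GExt}^i_R(M,Z_m(Y))$.

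Statement (1) is dual: applying $S^m$ to $0\longrightarrow M\longrightarrow D^\bullet$ yields an exact right resolution $0\longrightarrow S^m(M)\longrightarrow S^m(D^0)\longrightarrow S^m(D^1)\longrightarrow\cdots$ of $S^m(M)$ by Gorenstein-injective complexes (again using that these form ${\rm dw}\widetilde{\mathcal{GI}nj}$), which is proper because for a Gorenstein-injective complex $V$ one has ${\rm Hom}_{\Cadl}(S^m(-),V)\cong{\rm Hom}_R(-,Z_m(V))$ with $Z_m(V)$ Gorenstein-injective. Then Proposition~1.1~(3) identifies ${\rm Hom}_{\Cadl}(X,S^m(D^\bullet))$ with ${\rm Hom}_R(X_m/B_m(X),D^\bullet)$ naturally, and passing to cohomology gives ${\rm GExt}^i_{\Cadl}(X,S^m(M))\cong{\rm GExt}^i_R(X_m/B_m(X),M)$. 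This is also the place to justify the remark preceding the statement: at no point did we resolve $X$ or $Y$, so the relative exactness that had to be hypothesised on $X$ in Proposition~\ref{isospheres}~(3) is here supplied automatically by the proper resolution of $M$ and need not be imposed; exactness of $X$ and $Y$ is used only to pass between $X_m/B_m(X)$ and $Z_{m-1}(X)$ (and dually) when comparing these isomorphisms with the relative-extension monomorphisms of the previous section via the short exact sequence of cycles \cite[Lemma~3.2]{Perez}.

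The main obstacle is the properness verification for the resolutions obtained by applying $S^m$ — precisely, that $W_m/B_m(W)$ is Gorenstein-projective for every Gorenstein-projective complex $W$, and dually that $Z_m(V)$ is Gorenstein-injective for every Gorenstein-injective complex $V$. This is exactly where the Gorenstein hypothesis enters in an essential way: it rests on the explicit description of the Gorenstein-projective (resp.\ injective) complexes over a Gorenstein ring as exact complexes with Gorenstein-projective (resp.\ injective) cycles, combined with the elementary adjunctions of Proposition~1.1. A secondary point to record carefully is the complex-level version of the identity ${\rm GExt}^i_{\Cadl}(-,-)\cong{\rm Ext}^i_{\Cadl}(\text{GP complexes};-,-)\cong{\rm Ext}^i_{\Cadl}(-,-;\text{GI complexes})$, which legitimises computing ${\rm GExt}^i_{\Cadl}$ from either kind of proper resolution.
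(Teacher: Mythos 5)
Your argument breaks down at the properness verification, and it does so at precisely the point the paper flags in the paragraph preceding the proposition. You assert that every Gorenstein-projective complex $W$ is ``exact with Gorenstein-projective cycles'', so that $W_m/B_m(W)\cong Z_{m-1}(W)$ is a Gorenstein-projective module. That is the description of the class $\widetilde{\mathcal{GP}roj}$, not of the Gorenstein-projective complexes: over a Gorenstein ring the latter coincide with ${\rm dw}\widetilde{\mathcal{GP}roj}$, the \emph{degreewise} Gorenstein-projective complexes, which are in general not exact (one has $\widetilde{\mathcal{GP}roj}={\rm dw}\widetilde{\mathcal{GP}roj}\cap\mathcal{E}$, a strictly smaller class). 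For a non-exact $W\in{\rm dw}\widetilde{\mathcal{GP}roj}$ the quotient $W_m/B_m(W)$ need not be Gorenstein-projective: already for $R=\mathbb{Z}$ the complex $\cdots\longrightarrow 0\longrightarrow \mathbb{Z}\stackrel{2}\longrightarrow \mathbb{Z}\longrightarrow 0\longrightarrow\cdots$ is degreewise projective, hence a Gorenstein-projective complex, yet the relevant quotient is $\mathbb{Z}/2\mathbb{Z}$, which is not Gorenstein-projective since ${\rm Ext}^1_{\mathbb{Z}}(\mathbb{Z}/2\mathbb{Z},\mathbb{Z})\neq 0$ while $\mathbb{Z}\in\mathcal{W}$. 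Consequently ${\rm Hom}_{\Cadl}(W,S^m(C_\bullet))\cong{\rm Hom}_R(W_m/B_m(W),C_\bullet)$ has no reason to be exact, i.e.\ $S^m(C_\bullet)\longrightarrow S^m(M)$ need not be a proper left Gorenstein-projective resolution of $S^m(M)$, and it cannot be used to compute ${\rm GExt}^i_{\Cadl}(S^m(M),Y)$. The dual half of your argument for {\bf (1)} has the same defect: $Z_m(V)$ need not be Gorenstein-injective for a merely degreewise Gorenstein-injective complex $V$. This is exactly the pitfall the paper states explicitly: the complex $S^m(\textbf{\textit{C}}_\bullet\longrightarrow M)$ is not necessarily ${\rm Hom}_{\Cadl}({\rm dw}\widetilde{\mathcal{GP}roj},-)$-exact.

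The paper's proof resolves the complex rather than the module: it takes a special Gorenstein-projective pre-cover $0\longrightarrow W\longrightarrow C\longrightarrow X\longrightarrow 0$ with $W\in\widetilde{\mathcal{W}}$, uses the exactness of $X$ and $W$ to conclude that $C$ is exact, hence lies in ${\rm dw}\widetilde{\mathcal{GP}roj}\cap\mathcal{E}={\rm dg}\widetilde{\mathcal{GP}roj}\cap\mathcal{E}=\widetilde{\mathcal{GP}roj}$, so that $C_m/B_m(C)\longrightarrow X_m/B_m(X)$ is a special Gorenstein-projective pre-cover of $X_m/B_m(X)$; the isomorphism then follows by comparing the two resulting long exact sequences and dimension shifting. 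Note that the exactness hypothesis on $X$ and $Y$ is essential there (it is what forces $C$ to be exact), whereas your main line of argument never uses it and would therefore ``prove'' the proposition without that hypothesis --- a further sign that the properness step cannot be correct as written.
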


One may be tempted to prove this result by considering, for example, exact Gorenstein-projective resolutions of $M$, say $\textbf{\textit{C}}_\bullet \longrightarrow M$. The problem is that the complex $S^m(\textbf{\textit{C}}_\bullet \longrightarrow M)$ is not necessarily ${\rm Hom}_{\Cadl}({\rm dw}\widetilde{\mathcal{GP}roj},-)$-exact. The proof we give below uses the fact that from a special Gorenstein-projective pre-cover of $X$, we can obtain a special Gorenstein-projective pre-cover of $\frac{X_m}{B_m(X)}$. Before going into the details, we need the following definitions and lemmas. \\

\begin{definition} A \underline{cotorsion pair} $(\mathcal{A,B})$ in an Abelian category $\mathcal{C}$ is given by two classes $\mathcal{A}$ and $\mathcal{B}$ of objects in $\mathcal{C}$ such that $\mathcal{A} = \mbox{}^\perp\mathcal{B}$ and $\mathcal{B} = \mathcal{A}^\perp$. Given a chain complex $(\mathcal{A,B})$ in $\mathcal{C}$, we say that a chain complex $X$ is a \underline{differential graded $\mathcal{A}$-complex} if $X_m \in \mathcal{A}$ for every $m \in \mathbb{Z}$, and if every chain map $X \longrightarrow B$ is chain homotopic to zero whenever $B$ is a $\mathcal{B}$-complex. The class of differential graded $\mathcal{A}$-complexes shall be denoted by ${\rm dg}\widetilde{\mathcal{A}}$. \\
\end{definition}

\begin{lemma} If $R$ is a Gorenstein ring, then ${\rm dw}\widetilde{\mathcal{GP}roj} = {\rm dg}\widetilde{\mathcal{GP}roj}$. Dually, ${\rm dw}\widetilde{\mathcal{GI}nj} = {\rm dg}\widetilde{\mathcal{GI}nj}$. 
\end{lemma}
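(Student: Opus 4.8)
The plan is to establish the two inclusions of ${\rm dw}\widetilde{\mathcal{GP}roj} = {\rm dg}\widetilde{\mathcal{GP}roj}$ separately. One inclusion is free: every member of ${\rm dg}\widetilde{\mathcal{GP}roj}$ has all of its terms in $\mathcal{GP}roj$ by definition, so ${\rm dg}\widetilde{\mathcal{GP}roj} \subseteq {\rm dw}\widetilde{\mathcal{GP}roj}$. For the reverse inclusion I would first record that, over a Gorenstein ring, $(\mathcal{GP}roj,\mathcal{W})$ is a (complete, hereditary) cotorsion pair, so that the ``$\mathcal{B}$-complexes'' appearing in the definition of ${\rm dg}\widetilde{\mathcal{GP}roj}$ are precisely the $\mathcal{W}$-complexes, i.e. the members of $\widetilde{\mathcal{W}}$. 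Hence, given $X \in {\rm dw}\widetilde{\mathcal{GP}roj}$, the first defining condition of ${\rm dg}\widetilde{\mathcal{GP}roj}$ holds automatically, and the only thing left to prove is that every chain map $f : X \longrightarrow B$ with $B \in \widetilde{\mathcal{W}}$ is chain homotopic to zero.

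The key reduction is a standard fact about mapping cones in $\Cadl$: for any chain map $f : X \longrightarrow B$, the mapping cone $C(f)$ (with $C(f)_n = X_{n-1} \oplus B_n$) fits into a short exact sequence of complexes $0 \longrightarrow B \longrightarrow C(f) \longrightarrow X[1] \longrightarrow 0$, where $X[1]$ is the shift $(X[1])_n = X_{n-1}$, and $f$ is chain homotopic to zero if and only if this sequence splits in $\Cadl$, equivalently if and only if it represents the zero class in $\mathcal{E}xt^1_{\Cadl}(X[1],B)$. Indeed, a chain-map section $X[1] \longrightarrow C(f)$ is, up to a sign, exactly the data of a contracting homotopy for $f$, and conversely; this is a short direct verification. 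Consequently it suffices to prove that $\mathcal{E}xt^1_{\Cadl}(X[1],B) = 0$.

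To get this vanishing, observe that $X[1]$ again lies in ${\rm dw}\widetilde{\mathcal{GP}roj}$, since a shift of a degreewise-Gorenstein-projective complex is again one. Therefore, by \cite[Theorem 3.2.5]{GR}, $X[1]$ is a Gorenstein-projective complex, hence left orthogonal with respect to ${\rm Ext}^1_{\Cadl}$ to every complex of finite projective dimension; and by \cite[Proposition 3.1]{Perez} the class $\widetilde{\mathcal{W}}$ is exactly the class of such complexes, so in particular $B$ is one. Thus $\mathcal{E}xt^1_{\Cadl}(X[1],B) \cong {\rm Ext}^1_{\Cadl}(X[1],B) = 0$, which gives $X \in {\rm dg}\widetilde{\mathcal{GP}roj}$ and completes this inclusion. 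The dual equality ${\rm dw}\widetilde{\mathcal{GI}nj} = {\rm dg}\widetilde{\mathcal{GI}nj}$ follows by the mirror argument applied to the cotorsion pair $(\mathcal{W},\mathcal{GI}nj)$: for $X \in {\rm dw}\widetilde{\mathcal{GI}nj}$ and a chain map $f : A \longrightarrow X$ with $A \in \widetilde{\mathcal{W}}$, the cone sequence $0 \longrightarrow X \longrightarrow C(f) \longrightarrow A[1] \longrightarrow 0$ reduces null-homotopy of $f$ to the vanishing of $\mathcal{E}xt^1_{\Cadl}(A[1],X)$, which holds because $X$ is a Gorenstein-injective complex by \cite[Theorem 3.3.5]{GR} while $A[1] \in \widetilde{\mathcal{W}}$ has finite projective dimension by \cite[Proposition 3.1]{Perez}.

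The main obstacle I anticipate is bookkeeping rather than conceptual: one must identify precisely which cotorsion pair governs the definition of ${\rm dg}\widetilde{\mathcal{GP}roj}$ (hence which complexes $B$ must be tested), verify carefully — signs included — the equivalence between chain-nullhomotopy of $f$ and split-exactness of $0 \longrightarrow B \longrightarrow C(f) \longrightarrow X[1] \longrightarrow 0$, and make sure the identifications ``${\rm dw}\widetilde{\mathcal{GP}roj} = $ Gorenstein-projective complexes'' and ``$\widetilde{\mathcal{W}} = $ complexes of finite projective dimension'' are invoked in the generality (arbitrary, possibly unbounded complexes) actually needed here.
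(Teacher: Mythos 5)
Your proof is correct, and it takes a genuinely (if mildly) different route from the paper's. The paper disposes of the lemma in two citations: ${\rm dw}\widetilde{\mathcal{GP}roj} = {}^{\perp}(\widetilde{\mathcal{W}})$ by Garc\'ia Rozas, and ${\rm dg}\widetilde{\mathcal{GP}roj} = {}^{\perp}(\widetilde{\mathcal{W}})$ by Gillespie's general result on cotorsion pairs of complexes \cite[Proposition 3.6]{Gil} applied to $(\mathcal{GP}roj,\mathcal{W})$. You keep the first of these inputs (in the equivalent form ``degreewise Gorenstein-projective complexes are Gorenstein-projective complexes, hence ${\rm Ext}^1$-orthogonal to $\widetilde{\mathcal{W}}$'') but replace the second by a direct argument: the inclusion ${\rm dg}\widetilde{\mathcal{GP}roj} \subseteq {\rm dw}\widetilde{\mathcal{GP}roj}$ is definitional, and for the converse you reduce null-homotopy of $f : X \longrightarrow B$ to the splitting of the mapping-cone sequence $0 \longrightarrow B \longrightarrow C(f) \longrightarrow X[1] \longrightarrow 0$, hence to the vanishing of ${\rm Ext}^1_{\Cadl}(X[1],B)$, which follows since ${\rm dw}\widetilde{\mathcal{GP}roj}$ is closed under shift. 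This is precisely the mechanism underlying Gillespie's Proposition 3.6, so in effect you have inlined the only half of that citation that is actually needed; what your version buys is a self-contained proof of the one non-formal inclusion, at the cost of the sign bookkeeping in the cone/homotopy dictionary that you correctly flag (and of checking that $\mathcal{E}xt^1$ agrees with ${\rm Ext}^1$ over $\Cadl$, which is harmless since this category has enough projectives). Both arguments ultimately rest on the same nontrivial external fact, namely the identification over a Gorenstein ring of ${\rm dw}\widetilde{\mathcal{GP}roj}$ (resp.\ ${\rm dw}\widetilde{\mathcal{GI}nj}$) with the left (resp.\ right) orthogonal of $\widetilde{\mathcal{W}}$.
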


\newpage

\begin{proof} On the one hand, ${\rm dw}\widetilde{\mathcal{GP}roj} = \mbox{}^\perp(\widetilde{\mathcal{W}})$ by \cite[Theorem 3.3.5]{GR}. On the other hand, \cite[Proposition 3.6]{Gil} implies that ${\rm dg}\widetilde{\mathcal{GP}roj} = \mbox{}^\perp(\widetilde{\mathcal{W}})$, since $(\mathcal{GP}roj, \mathcal{W})$ is a cotorsion pair and every module has a special Gorenstein-projective pre-cover.  
\end{proof}

\begin{proof}[Proof of Proposition \ref{isosgorspheres}] We only prove {\bf (1)}. Since ${\rm dw}\widetilde{\mathcal{GP}roj}$ is a special pre-covering class, there exists a short exact sequence $0 \longrightarrow W \longrightarrow C \longrightarrow X \longrightarrow 0$ where $W \in \widetilde{\mathcal{W}}$ and $C$ is a Gorenstein-projective complex. Using the fact that $X$ is exact and \cite[Lemma 3.2 (2)]{Perez}, we have a induced short exact sequence $0 \longrightarrow \frac{W_m}{B_m(W)} \longrightarrow \frac{C_m}{B_m(C)} \longrightarrow \frac{X_m}{B_m(X)} \longrightarrow 0$. On the one hand, $W \in \widetilde{\mathcal{W}}$ implies that $\frac{W_m}{B_m(W)} \cong Z_{m-1}(W) \in \mathcal{W}$. On the other hand, $X$ and $W$ are exact and so $C$ is also exact (the class of exact complexes is closed under extensions). We have $C \in {\rm dw}\widetilde{\mathcal{GP}roj} \cap \mathcal{E} = {\rm dg}\widetilde{\mathcal{GP}roj} \cap \mathcal{E} = \widetilde{\mathcal{GP}roj}$, where the last equality follows by \cite[Theorem 3.12]{Gil}. Then $\frac{C_m}{B_m(C)} \cong Z_{m-1}(C) \in \mathcal{GP}roj$. Hence $\frac{C_m}{B_m(C)} \longrightarrow \frac{X_m}{B_m(X)}$ is a special Gorenstein-projective pre-cover of $\frac{X_m}{B_m(X)}$.    

Note that $0 \longrightarrow W \longrightarrow C \longrightarrow X \longrightarrow 0$ is ${\rm Hom}_{\Cadl}({\rm dw}\widetilde{\mathcal{GP}roj},-)$-exact since $W \in \widetilde{\mathcal{W}}$. Similarly, $0 \longrightarrow \frac{W_m}{B_m(W)} \longrightarrow \frac{C_m}{B_m(C)} \longrightarrow \frac{X_m}{B_m(X)} \longrightarrow 0$ is ${\rm Hom}_R(\mathcal{GP}roj,-)$-exact. It follows by \cite[Theorem 12.1.4]{EJ} that there are long exact sequences \[ 0 \rightarrow {\rm Hom}(X,S^m(M)) \rightarrow {\rm Hom}(C,S^m(M)) \rightarrow {\rm Hom}(W,S^m(M)) \rightarrow {\rm GExt}^1(X,S^m(M)) \rightarrow \cdots \mbox{ \ and } \] \[ 0 \rightarrow {\rm Hom}_R\left(\frac{X_m}{B_m(X)},M\right) \rightarrow {\rm Hom}_R\left(\frac{C_m}{B_m(C)},M\right) \rightarrow {\rm Hom}_R\left(\frac{W_m}{B_m(W)},M\right) \rightarrow {\rm GExt}^1_R\left(\frac{X_m}{B_m(X)},M\right) \rightarrow \cdots \] where ${\rm GExt}^1_{\Cadl}(C,S^m(M)) = 0$ and ${\rm GExt}^1_R\left(\frac{C_m}{B_m(C)},M\right) = 0$. It follows that we have the following commutative diagram with exact rows:
\[ \begin{tikzpicture}
\matrix (m) [matrix of math nodes, row sep=2em, column sep=2em, text height=1.5ex, text depth=0.25ex]
{ 0 & {\rm Hom}(X,S^m(M)) & {\rm Hom}(C,S^m(M)) & {\rm Hom}(W,S^m(M)) & {\rm GExt}^1(X,S^m(M)) & 0 \\
0 & {\rm Hom}_R\left(\frac{X_m}{B_m(X)},M\right) & {\rm Hom}_R\left(\frac{C_m}{B_m(C)},M\right) & {\rm Hom}_R\left(\frac{W_m}{B_m(W)},M\right) & {\rm GExt}^1_R\left(\frac{X_m}{B_m(X)},M\right) & 0 \\ };
\path[->]
(m-1-1) edge (m-1-2) (m-1-2) edge (m-1-3) edge node[right] {$\cong$} (m-2-2) (m-1-3) edge (m-1-4) edge node[right] {$\cong$} (m-2-3) (m-1-4) edge (m-1-5) edge node[right] {$\cong$} (m-2-4) (m-1-5) edge (m-1-6) edge (m-2-5)
(m-2-1) edge (m-2-2) (m-2-2) edge (m-2-3) (m-2-3) edge (m-2-4) (m-2-4) edge (m-2-5) (m-2-5) edge (m-2-6);
\end{tikzpicture} \] 
By diagram chasing one has that the rightmost column is an isomorphism. The case $i > 1$ follows by induction. 
\end{proof}


\section*{Acknowledgements} 

The author wants to thank the financial support by the grants N000141310260 and FA95501410031 from the Office of Naval Research and the Air Force Office of Scientific Research, respectively.


\end{document}